\newcommand{\Mod}[1]{\ (\textup{mod}\ #1)}
\theoremstyle{plain} %text of this environment is typesetted in italics
\newtheorem{theorem}{\indent\sc Theorem}[section]
\newtheorem{lemma}[theorem]{\indent\sc Lemma}
\newtheorem{proposition}[theorem]{\indent\sc Proposition}
\theoremstyle{definition} %text of this environment is typesetted in roman letters
\newtheorem{definition}[theorem]{\indent\sc Definition}
\newtheorem{assumption}[theorem]{\indent\sc Assumption}
\newtheorem{remark}[theorem]{\indent\sc Remark}
\def\address#1#2{\begingroup
\noindent\parbox[t]{7.8cm}{%
\small{\scshape\ignorespaces#1}\par\vskip1ex
\noindent\small{\itshape E-mail address}%
\/: #2\par\vskip4ex}\hfill%
\endgroup}%
\title{Form class groups and class fields of CM-fields}
\author{
\textsc{Ho Yun Jung, Ja Kyung Koo, Dong Hwa Shin and Dong Sung Yoon} %names of authors
}
\date{} %leave empty
\begin{document}

\allowdisplaybreaks

\maketitle

%%%%%%%%%%%%%%% footnote %%%%%%%%%%%%%%%%
\footnote{ %2010 MSC numbers
2010 \textit{Mathematics Subject Classification}. Primary 11E12; Secondary 11F41, 11R29, 11R37.}
\footnote{ %key words and phrases
\textit{Key words and phrases}. Binary quadratic forms, class field theory, CM-fields, Hilbert modular functions,
ideal class groups.} \footnote{
\thanks{
The first author was supported by the research fund of Dankook University in 2020.
The third (corresponding) author was supported by the Hankuk University of Foreign Studies Research Fund of 2020.
The fourth author was
supported by the National Research Foundation of Korea(NRF) grant funded by the Korea government(MSIT) (2020R1C1C1A01006139). 
}
}
%%%%%%%%%%%%%%%%%%%%%%%%%%%%%%%%%%%%%%%%%
\vspace{-1cm}
\begin{center}
Dedicated to the late professor Goro Shimura
\end{center}

\begin{abstract}
Let $F$ be a totally real number field of class number one,
and let $K$ be a CM-field with $F$ as its maximal real subfield.
For each positive integer $N$,
we construct a class group of certain binary quadratic forms over $F$
which is isomorphic to the ray class group of $K$ modulo $N$.
Assuming further that the narrow class number of $F$ is one,
we construct a class field of the reflex field of $K$
in terms of the singular values of Hilbert modular functions.
\end{abstract}

\maketitle
\tableofcontents

\section {Introduction}

In the \textit{Disquisitiones Arithmeticae} published in 1801 (\cite{Gauss}) Gauss introduced
the direct composition on the set of primitive positive definite binary quadratic forms of given (even) discriminant, although he did not
recognize the concept of a group.
Eventually, in 2004, Bhargava first discovered higher composition laws
and gave a simplification of Gauss' composition
in terms of certain cubes of integers
(\cite{BhargavaI}, \cite{BhargavaII}, \cite{BhargavaIII} and \cite{Buell}).
\par
Let $K$ be an imaginary quadratic field
and $\mathcal{O}$ be the order of discriminant $D$ in $K$.
And, let $\mathcal{Q}(D)$ be the set of
primitive positive definite binary quadratic forms
$Q(x,\,y)=ax^2+bxy+cy^2$ of discriminant $b^2-4ac=D$
on which the modular group $\mathrm{SL}_2(\mathbb{Z})$ defines the
proper equivalence as
\begin{equation*}
Q\sim Q'\quad\Longleftrightarrow\quad
Q'\left(\begin{bmatrix}x\\y\end{bmatrix}\right)=Q\left(\gamma\begin{bmatrix}x\\y\end{bmatrix}\right)~
\textrm{for some}~\gamma\in\mathrm{SL}_2(\mathbb{Z}).
\end{equation*}
For each $Q(x,\,y)\in\mathcal{Q}(d_K)$, we
let $\omega_Q$ be the zero of the quadratic polynomial $Q(x,\,1)$ lying in
the complex upper half-plane $\mathbb{H}$.
Then, the modern algebraic number theory due to Dedekind enables us to have a bijection
between the set of equivalence classes $\mathcal{C}(D)=\mathcal{Q}(D)/\sim$ and
the group $\mathcal{C}(\mathcal{O})$ of proper fractional $\mathcal{O}$-ideals,
namely,
\begin{equation*}
\begin{array}{ccc}
\mathcal{C}(D)&\rightarrow&\mathcal{C}(\mathcal{O})\\
\mathrm{[}Q\mathrm{]}&\mapsto&
\mathrm{[}\mathrm{[}\omega_Q,\,1\mathrm{]}\mathrm{]}
\end{array}
\end{equation*}
where $[\omega_Q,\,1]=\mathbb{Z}\omega_Q+\mathbb{Z}$.
Through this bijection we endow $\mathcal{C}(D)$ with the so-called Dirichlet composition
so that it becomes an abelian group
(\cite[(3.7) and Theorem 7.7]{Cox}).
On the other hand, we know by the theory of complex multiplication on elliptic curves
completed by Hasse that the ring class field
$H_\mathcal{O}$ of order $\mathcal{O}$ is generated by the $j$-invariant $j(\mathcal{O})$
of the elliptic curve $\mathbb{C}/\mathcal{O}$.
Furthermore, there is an isomorphism
\begin{equation*}
\begin{array}{ccc}
\mathcal{C}(\mathcal{O})&\stackrel{\sim}{\rightarrow}&\mathrm{Gal}(H_\mathcal{O}/K)\\
\mathrm{[}\mathfrak{a}\mathrm{]}&\mapsto&(j(\mathcal{O})\mapsto j(\overline{\mathfrak{a}}))
\end{array}
\end{equation*}
(\cite{Hasse} or \cite[Theorem 11.1 and Corollary 11.37]{Cox}), and hence we obtain an isomorphism
\begin{equation}\label{CDGHK}
\begin{array}{ccc}
\mathcal{C}(D)&\stackrel{\sim}{\rightarrow}&\mathrm{Gal}(H_\mathcal{O}/K)\\
\mathrm{[}Q\mathrm{]}&\mapsto&\left(j(\mathcal{O})\mapsto j([-\overline{\omega_Q},\,1])\right).
\end{array}
\end{equation}
Here, $\overline{\,\cdot\,}$ stands for the complex conjugation.
\par
Before Hasse, Hecke constructed unramified abelian extensions of some biquadratic fields
by means of singular values of Hilbert modular functions of two variables analogous to
the $j$-function of the elliptic case (\cite{Hecke1912}, \cite{Hecke1913} and \cite{Schappacher}).
Later, Shimura succeeded in constructing unramified or ramified abelian extensions of
certain CM-fields through the investigation on abelian varieties
of the same type with complex multiplication and polarizations, and also the analysis of the points of finite order
on abelian varieties (\cite{Shimura} and \cite{S-T}).
On the other hand,
Zemkova recently gave a correspondence between binary quadratic forms over any number field $F$ of narrow class number one
and a certain ideal class group of a relative quadratic extension of $F$ (\cite{Zemkova}). See also \cite{Mastropietro}
for the special case where $F$ is real quadratic.
In this paper, inspired by these results,
we intend to extend the classical isomorphism in (\ref{CDGHK}) to
a general CM-field.
\par
More precisely, we consider a CM-field $K$ whose maximal real subfield $F$ is of class number one.
Denote by $\mathcal{O}_K$ and $\mathcal{O}_F$ the rings of algebraic integers in $K$ and $F$, respectively. Since $\mathcal{O}_F$ is a PID,
for a pair of elements $a$ and $b$ of $\mathcal{O}_F$ not both zero,
one can consider their greatest common divisor
$\gcd(a,\,b)$ up to associate
(\cite[Definition 2.2 in Chapter V]{Aluffi}).
And we take an element $\omega_K$ of $K\cap\mathbb{H}$ such that
$\mathcal{O}_K=\mathcal{O}_F\omega_K+\mathcal{O}_F$
(Lemma \ref{basis}), and let
$d_K$ ($<0$) be the discriminant of the minimal polynomial of $\omega_K$ over $F$.
For a positive integer $N$, let
\begin{equation*}
\mathcal{Q}_F(N,\,d_K)=\{ax^2+bxy+cy^2\in\mathcal{O}_F[x,\,y]~|~
\gcd(a,\,b,\,c)=1,~a>0,\,b^2-4ac=d_K,\,\gcd(a,\,N)=1\}.
\end{equation*}
Then the arithmetic subgroup
\begin{equation*}
\Gamma_{F,\,1}(N)=\left\{
\gamma=\begin{bmatrix}c_1&c_2\\c_3&c_4\end{bmatrix}
\in\mathrm{GL}_2(\mathcal{O}_F)~|~\det(\gamma)>0~\textrm{and}~
c_3\equiv0,\,
c_4\equiv\zeta\Mod{N\mathcal{O}_F}~\textrm{for some}~\zeta\in\mathcal{O}_F^\times\right\}
\end{equation*}
of $\mathrm{GL}_2(F)$ defines an equivalence relation $\sim_{\Gamma_{F,\,1}(N)}$ on the set $\mathcal{Q}_F(N,\,d_K)$ as
follows:
\begin{equation*}
Q\sim_{\Gamma_{F,\,1}(N)} Q'\quad
\Longleftrightarrow\quad Q'=\frac{1}{\det(\gamma)}Q\left(\gamma\begin{bmatrix}x\\y\end{bmatrix}\right)
~\textrm{for some}~\gamma\in\Gamma_{F,\,1}(N)
\end{equation*}
(Lemma \ref{action} and Definition \ref{equivalencerelation}).
As for the arithmetic subgroup, we refer to \cite[$\S$24.8]{Shimura}.
Now, as the first main theorem of this paper, we shall explain that
the set of equivalence classes
\begin{equation*}
\mathcal{C}_F(N,\,d_K)=\mathcal{Q}_F(N,\,d_K)/\sim_{\Gamma_{F,\,1}(N)}
\end{equation*}
can be viewed as a group isomorphic to the ray class group $\mathcal{C}(N\mathcal{O}_K)$ of $K$
modulo $N\mathcal{O}_K$ by verifying that the natural mapping
\begin{equation*}
\begin{array}{ccc}
\mathcal{C}_F(N,\,d_K)&\mapsto&\mathcal{C}(N\mathcal{O}_K)\\
\mathrm{[}Q\mathrm{]}&\mapsto&\mathrm{[}\mathrm{[}\omega_Q,\,1\mathrm{]}_F\mathrm{]}
\end{array}
\end{equation*}
is bijective, where $\omega_Q$ is the zero of $Q(x,\,1)$ lying in $\mathbb{H}$
and $[\omega_Q,\,1]_F=\mathcal{O}_F\omega_Q+\mathcal{O}_F$
(Theorem \ref{main1}).
\par
Next, the second main theorem exhibits the Hilbert 12th problem
generating class fields in terms of singular values. To be precise,
let $(K,\,\{\varphi_i\}_{i=1}^g)$ be a CM-type such that
\begin{equation*}
\varphi_1=\mathrm{id}_K\quad\textrm{and}\quad
\varphi_i(\omega_K)\in\mathbb{H}\quad(i=1,\,2,\,\ldots,\,g)
\end{equation*}
with $K^*$ its reflex field.
Assume further that the narrow class number of $F$ is one.
For the arithmetic subgroup
\begin{equation*}
\Gamma_{F,\,1}^+(N)=\{\gamma\in\Gamma_{F,\,1}(N)~|~\det(\gamma)\gg0\}
\end{equation*}
of $\mathrm{GL}_2(F)$, let $\mathcal{A}_0(\Gamma_{F,\,1}^+(N),\,\mathbb{Q})$ be
the field of Hilbert modular functions for $\Gamma_{F,\,1}^+(N)$ with rational Fourier coefficients.
And, we set
\begin{equation*}
\mathbf{w}=
(\varphi_1(-\overline{\omega_K}),\,
\varphi_2(-\overline{\omega_K}),\,\ldots,\,
\varphi_g(-\overline{\omega_K}))
\end{equation*}
as a CM-point on $\mathbb{H}^g$ in the sense of \cite[$\S$24.10]{Shimura}.
Then we shall prove that the field
\begin{equation*}
L=K^*(f(\mathbf{w})~|~f\in\mathcal{A}_0(\Gamma_{F,\,1}^+(N),\,\mathbb{Q})~
\textrm{is finite at}~\mathbf{w})
\end{equation*}
is a subfield of the ray class field $K^*_{(N)}$ of $K^*$ modulo $(N)=N\mathcal{O}_{K^*}$ with
\begin{equation*}
\mathrm{Gal}(L/K^*)\simeq\mathcal{C}(N\mathcal{O}_{K^*})/\mathrm{Ker}(\mathfrak{g}_N)
\end{equation*}
where $\mathfrak{g}_N:\mathcal{C}(N\mathcal{O}_{K^*})
\rightarrow\mathcal{C}(N\mathcal{O}_K)$ is the homomorphism induced from the reflex norm map
(Theorem \ref{main2}).
To this end, we shall use the theory of complex multiplication
of higher dimensional abelian varieties developed by Shimura, Taniyama
and Weil.
\par
On the other hand, it is worthy of note that Ribet showed
by utilizing the theory of modular forms and Galois representations
that for each irregular prime $p$ there exists an unramified abelian extension
of the $p$th cyclotomic field $\mathbb{Q}(e^{2\pi\mathrm{i}/p})$
whose Galois group is killed by $p$
(\cite{Mazur} and \cite{Ribet}).
Besides, over real quadratic fields Darmon, Lauder and Rotger suggested
an approach to explicit class field theory
by making use of techniques from $p$-adic modular forms and
deformations of Galois representations (\cite{D-L-R}).

\section {Binary quadratic forms over the maximal real subfield}

Throughout this paper, we let $K$ be a CM-field with maximal real subfield $F$.
This means that $K$ is a totally imaginary quadratic extension of a totally real number field $F$.
By $\mathcal{O}_F^\times$ we mean the group of units in the ring of integers $\mathcal{O}_F$ of $F$.

\begin{assumption}\label{classnumberone}
We assume that the class number $h_F$ of $F$ is one so that
$\mathcal{O}_F$ is a PID.
\end{assumption}

Let $\mathcal{Q}_F$ be the set of
primitive positive definite binary quadratic forms over $\mathcal{O}_F$.
That is,
$Q(x,\,y)=ax^2+bxy+cy^2$ ($\in\mathcal{O}_F[x,\,y]$)
belongs to $\mathcal{Q}_F$ if and only if
\begin{equation*}
\gcd(a,\,b,\,c)=1,\quad a>0\quad\textrm{and}\quad
d_Q<0,
\end{equation*}
where $d_Q=b^2-4ac$ is the discriminant of $Q$.
For $\nu_1,\nu_2\in K$, we write
\begin{equation*}
[\nu_1,\,\nu_2]_F=\mathcal{O}_F\nu_1+\mathcal{O}_F\nu_2.
\end{equation*}
Although the following lemma appeared in \cite[Proposition 1.1]{Zemkova},
we shall give its proof for the sake of completeness.

\begin{lemma}\label{basis}
There is an element $\omega$ of $K\cap\mathbb{H}$ such that
$\mathcal{O}_K=[\omega,\,1]_F$.
\end{lemma}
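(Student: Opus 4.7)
The plan is to exhibit $\mathcal{O}_K$ as a free $\mathcal{O}_F$-module of rank two that contains $1$ as a direct summand, and then to adjust signs to land in $\mathbb{H}$.

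First I would recall that $\mathcal{O}_K$ is a finitely generated $\mathcal{O}_F$-module (standard, since $K/F$ is a finite separable extension and $\mathcal{O}_F$ is noetherian integrally closed). Next, consider the short exact sequence
\begin{equation*}
0 \longrightarrow \mathcal{O}_F \longrightarrow \mathcal{O}_K \longrightarrow \mathcal{O}_K/\mathcal{O}_F \longrightarrow 0
\end{equation*}
of $\mathcal{O}_F$-modules. The crucial observation is that $\mathcal{O}_K/\mathcal{O}_F$ is torsion-free: if $a\alpha\in\mathcal{O}_F$ for some nonzero $a\in\mathcal{O}_F$ and $\alpha\in\mathcal{O}_K$, then $\alpha\in F$, and since $\mathcal{O}_K\cap F=\mathcal{O}_F$ we get $\alpha\in\mathcal{O}_F$. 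Combined with $\mathrm{rank}_{\mathcal{O}_F}(\mathcal{O}_K/\mathcal{O}_F)=[K:F]-1=1$, Assumption \ref{classnumberone} and the structure theorem for finitely generated modules over a PID yield $\mathcal{O}_K/\mathcal{O}_F\cong\mathcal{O}_F$ as free $\mathcal{O}_F$-modules.

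Then I would lift an $\mathcal{O}_F$-generator of $\mathcal{O}_K/\mathcal{O}_F$ to some $\omega_0\in\mathcal{O}_K$; the sequence splits, so $\mathcal{O}_K=\mathcal{O}_F\omega_0+\mathcal{O}_F=[\omega_0,\,1]_F$. Finally, viewing $K$ through the fixed embedding into $\mathbb{C}$ (under which $F$ is sent into $\mathbb{R}$, because $F$ is totally real), the element $\omega_0\notin F$ cannot be real, so $\mathrm{Im}(\omega_0)\neq 0$. Setting $\omega=\omega_0$ if $\mathrm{Im}(\omega_0)>0$ and $\omega=-\omega_0$ otherwise, we have $[\omega,\,1]_F=[\omega_0,\,1]_F=\mathcal{O}_K$ and $\omega\in K\cap\mathbb{H}$.

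No step here looks like a real obstacle; the only point requiring any care is the verification that $\mathcal{O}_K/\mathcal{O}_F$ is torsion-free, so that the PID structure theorem can be invoked to conclude freeness of rank one. Once that is in hand, the rest is lifting and a sign change.
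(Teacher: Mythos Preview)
Your proof is correct and takes a genuinely different route from the paper's. The paper first invokes the PID hypothesis to write $\mathcal{O}_K=\mathcal{O}_F\omega_1\oplus\mathcal{O}_F\omega_2$ for some arbitrary basis, then sets $\omega=\overline{\omega_1}\omega_2$ and shows that $d_{K/F}(\omega,1)=(\omega-\overline{\omega})^2$ generates the relative discriminant $\Delta(\mathcal{O}_K/\mathcal{O}_F)$; a standard discriminant criterion then forces $\{\omega,1\}$ to be an $\mathcal{O}_F$-basis as well. Your argument bypasses discriminants entirely: by observing that $\mathcal{O}_K/\mathcal{O}_F$ is torsion-free of rank one, hence free over the PID $\mathcal{O}_F$, you split the inclusion $\mathcal{O}_F\hookrightarrow\mathcal{O}_K$ directly and obtain $1$ as part of a basis in one stroke. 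Your approach is cleaner and more conceptual; the paper's has the minor virtue of giving an explicit formula for $\omega$ in terms of an arbitrary starting basis, at the cost of a discriminant computation that obscures why such an $\omega$ should exist.
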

\begin{proof}
By Assumption \ref{classnumberone} and the fact $[K:F]=2$, we obtain
\begin{equation}\label{oplus}
\mathcal{O}_K=\mathcal{O}_F\omega_1\oplus\mathcal{O}_F\omega_2
\quad
\textrm{for some}~\omega_1,\,\omega_2\in\mathcal{O}_K
\end{equation}
(\cite[Corollary to Proposition 7.46]{Narkiewicz}).
By changing $\omega_1$ and
$\omega_2$ if necessary, we may assume
\begin{equation*}
\omega=\overline{\omega_1}\omega_2\in\mathbb{H}.
\end{equation*}
We then see that
\begin{eqnarray*}
\Delta(\mathcal{O}_K/\mathcal{O}_F)&=&\left|\begin{matrix}
T_{K/F}(\omega_1\cdot\omega_1) &
T_{K/F}(\omega_1\cdot\omega_2)\\
T_{K/F}(\omega_2\cdot\omega_1) &
T_{K/F}(\omega_2\cdot\omega_2)\end{matrix}\right|
\mathcal{O}_F\quad\textrm{by (\ref{oplus})}\\
&=&\left|\begin{matrix}
\omega_1^2+\overline{\omega_1}^2 & \omega_1\omega_2+\overline{\omega_1\omega_2}\\
\omega_2\omega_1+\overline{\omega_2\omega_1} & \omega_2^2+\overline{\omega_2}^2
\end{matrix}\right|\mathcal{O}_F\\
&=&(\overline{\omega_1}\omega_2-\omega_1\overline{\omega_2})^2\mathcal{O}_F\\
&=&(\omega-\overline{\omega})^2\mathcal{O}_F
\end{eqnarray*}
(\cite[Lemma 7.2 in Chapter I]{Janusz}) and
\begin{eqnarray*}
d_{K/F}(\omega,\,1)=\left|\begin{matrix}
T_{K/F}(\omega\cdot\omega) & T_{K/F}(\omega\cdot1)\\
T_{K/F}(1\cdot\omega) & T_{K/F}(1\cdot1)
\end{matrix}\right|=\left|\begin{matrix}
\omega^2+\overline{\omega}^2 & \omega+\overline{\omega}\\
\omega+\overline{\omega} & 1+1
\end{matrix}\right|=(\omega-\overline{\omega})^2.
\end{eqnarray*}
Hence it follows that $\Delta(\mathcal{O}_K/\mathcal{O}_F)=d_{K/F}(\omega,\,1)\mathcal{O}_F$,
which implies by \cite[Proposition 2.24]{Milne}
\begin{equation*}
\mathcal{O}_K=\mathcal{O}_F\omega\oplus\mathcal{O}_F\quad\textrm{with}~\omega\in\mathbb{H}.
\end{equation*}
\end{proof}

From now on, we fix such an element $\omega$ in Lemma \ref{basis} and denote it by $\omega_K$.
Let $d_K$ be the discriminant of the minimal polynomial of $\omega_K$ over $F$, namely,
if
\begin{equation*}
\min(\omega_K,\,F)=x^2+b_Kx+c_K\quad(\in\mathcal{O}_F[x]),
\end{equation*}
then
$d_K=b_K^2-4c_K$. Observe that $d_K$ is totally negative because $F$ is totally real and $K=F(\omega_K)$ is totally imaginary.
Furthermore, for each $Q(x,\,y)=ax^2+bxy+cy^2\in\mathcal{Q}_F$, let
$\omega_Q$ be the zero of the quadratic polynomial $Q(x,\,1)$ in $\mathbb{H}$,
that is,
\begin{equation*}
\omega_Q=\frac{-b+\sqrt{d_Q}}{2a}.
\end{equation*}

For a positive integer $N$, we consider
the subset $\mathcal{Q}_F(N,\,d_K)$ of $\mathcal{Q}_F$ given by
\begin{equation*}
\mathcal{Q}_F(N,\,d_K)=\{Q=ax^2+bxy+cy^2\in\mathcal{Q}_F~|~\gcd(a,\,N)=1~
\textrm{and}~d_Q=d_K\}.
\end{equation*}
Let $\Gamma_{F,\,1}(N)$ be a subgroup of $\mathrm{GL}_2(\mathcal{O}_F)$ defined by
\begin{equation*}
\Gamma_{F,\,1}(N)=\left\{
\gamma=\begin{bmatrix}c_1&c_2\\c_3&c_4\end{bmatrix}
\in\mathrm{GL}_2(\mathcal{O}_F)~|~\det(\gamma)>0~\textrm{and}~
c_3\equiv0,\,
c_4\equiv\zeta\Mod{N\mathcal{O}_F}~\textrm{for some}~\zeta\in\mathcal{O}_F^\times\right\}
\end{equation*}
which is an arithmetic subgroup of $\mathrm{GL}_2(F)$.

\begin{lemma}\label{action}
The group $\Gamma_{F,\,1}(N)$ acts on the set $\mathcal{Q}_F(N,\,d_K)$ as
\begin{equation*}
Q^\gamma\left(\begin{bmatrix}x\\y\end{bmatrix}\right)=
\frac{1}{\det(\gamma)}Q\left(\gamma\begin{bmatrix}x\\y\end{bmatrix}\right)\quad
(Q\in\mathcal{Q}_F(N,\,d_K),~\gamma\in\Gamma_{F,\,1}(N)).
\end{equation*}
\end{lemma}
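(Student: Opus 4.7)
The plan is to verify directly that the prescribed formula sends $\mathcal{Q}_F(N,d_K)$ into itself and satisfies the two group-action axioms. Writing $\gamma = \begin{bmatrix} c_1 & c_2 \\ c_3 & c_4\end{bmatrix}$ and $Q = ax^2+bxy+cy^2$, I would begin by expanding $Q(\gamma \begin{bmatrix} x\\y\end{bmatrix})$ and reading off the coefficients of $Q^\gamma = a'x^2+b'xy+c'y^2$, namely
\begin{equation*}
a'\det(\gamma) = ac_1^2+bc_1c_3+cc_3^2, \quad
b'\det(\gamma)=2ac_1c_2+b(c_1c_4+c_2c_3)+2cc_3c_4, \quad
c'\det(\gamma)=ac_2^2+bc_2c_4+cc_4^2.
\end{equation*}
Since $\gamma\in\mathrm{GL}_2(\mathcal{O}_F)$, one has $\det(\gamma)\in\mathcal{O}_F^\times$, so $a',b',c'$ lie in $\mathcal{O}_F$; this settles integrality at once.

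Next I would verify the defining conditions of $\mathcal{Q}_F(N,d_K)$. The discriminant transforms as $d_{Q^\gamma}=\det(\gamma)^{-2}\cdot\det(\gamma)^2 d_Q = d_Q = d_K$, which is immediate from the standard substitution formula. Positive definiteness and $a'>0$ follow from the hypothesis $\det(\gamma)>0$, since $Q(\gamma v)$ is positive definite whenever $Q$ is, and dividing by a positive scalar preserves positivity. Primitivity is preserved by invoking invertibility: since $\det(\gamma)\in\mathcal{O}_F^\times$, the inverse $\gamma^{-1}$ also has entries in $\mathcal{O}_F$, and a direct computation shows $(Q^\gamma)^{\gamma^{-1}}=Q$; hence any $d\in\mathcal{O}_F$ dividing $\gcd(a',b',c')$ must divide $\gcd(a,b,c)=1$, so $d$ is a unit.

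The one genuinely delicate point---the main obstacle---is preservation of the congruence condition $\gcd(a',N)=1$, and this is exactly where the arithmetic subgroup $\Gamma_{F,1}(N)$ is needed. Using $c_3\equiv 0$ and $c_4\equiv \zeta\pmod{N\mathcal{O}_F}$ for some $\zeta\in\mathcal{O}_F^\times$, I would reduce modulo $N\mathcal{O}_F$ to get
\begin{equation*}
a'\det(\gamma)\equiv ac_1^2 \quad\textrm{and}\quad \det(\gamma)=c_1c_4-c_2c_3\equiv c_1\zeta \pmod{N\mathcal{O}_F}.
\end{equation*}
Because $\det(\gamma)$ is a unit (hence coprime to $N$) and $\zeta$ is a unit, the second congruence forces $\gcd(c_1,N)=1$; substituting this back into the first yields $a'\zeta\equiv ac_1\pmod{N\mathcal{O}_F}$, so $a'$ is coprime to $N$ since $a$, $c_1$, and $\zeta$ all are. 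This completes the proof that $Q^\gamma\in\mathcal{Q}_F(N,d_K)$.

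Finally, the group-action axioms are routine: $Q^{I_2}=Q$ is clear from the formula, and for $\gamma_1,\gamma_2\in\Gamma_{F,1}(N)$,
\begin{equation*}
(Q^{\gamma_1})^{\gamma_2}(v) = \frac{1}{\det(\gamma_2)}Q^{\gamma_1}(\gamma_2 v)
=\frac{1}{\det(\gamma_1\gamma_2)}Q(\gamma_1\gamma_2 v)=Q^{\gamma_1\gamma_2}(v),
\end{equation*}
using multiplicativity of the determinant. Thus $\Gamma_{F,1}(N)$ acts on $\mathcal{Q}_F(N,d_K)$ on the right as claimed.
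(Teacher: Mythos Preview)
Your proof is correct and follows essentially the same approach as the paper's own proof: expand $Q^\gamma$, check discriminant, positivity, primitivity via the inverse, coprimality of the leading coefficient with $N$, and then the action axioms. If anything, your treatment of $\gcd(a',N)=1$ is slightly more careful than the paper's, which writes $a'\equiv \det(\gamma)^{-1}ac_1^2\pmod{N\mathcal{O}_F}$ and asserts the conclusion without explicitly isolating why $c_1$ is coprime to $N$; your use of $\det(\gamma)\equiv c_1\zeta$ makes that step transparent.
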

\begin{proof}
Let $Q=ax^2+bxy+cy^2\in\mathcal{Q}_F(N,\,d_K)$, $\gamma=\begin{bmatrix}c_1&c_2\\c_3&c_4\end{bmatrix}$ and $\sigma\in\Gamma_{F,\,1}(N)$.
Set
$Q'=a'x^2+b'xy+c'y^2=Q^\gamma$.
Note first that
\begin{equation*}
d_{Q'}=\frac{1}{\det(\gamma)^2}\cdot d_Q\cdot\det(\gamma)^2=d_Q=d_K.
\end{equation*}
We also see that
\begin{equation*}
a'=\frac{1}{\det(\gamma)}(ac_1^2+bc_1c_3+cc_3^2)=\frac{1}{\det(\gamma)}Q(c_1,\,c_3)>0
\end{equation*}
because $\det(\gamma)>0$ and $Q$ is positive definite. Moreover, since $c_3\equiv0\Mod{N\mathcal{O}_F}$,
$a'$ satisfies
\begin{equation*}
a'\equiv\frac{1}{\det(\gamma)}ac_1^2\Mod{N\mathcal{O}_F},
\end{equation*}
which shows that $\gcd(a',\,N)=1$.
If $Q'$ is not primitive, then the relation
\begin{equation*}
Q\left(\begin{bmatrix}x\\y\end{bmatrix}\right)=
\det(\gamma)Q'\left(\gamma^{-1}\begin{bmatrix}x\\y\end{bmatrix}\right)
\end{equation*}
implies that $Q$ is not primitive, which yields a contradiction. Thus,
$Q'$ belongs to $\mathcal{Q}_F(N,\,d_K)$.
We also establish that
$Q^{I_2}=Q$ and
\begin{eqnarray*}
(Q^\gamma)^\sigma&=&Q'^\sigma\\
&=&\frac{1}{\det(\sigma)}Q'\left(\sigma\begin{bmatrix}x\\y\end{bmatrix}\right)\\
&=&\frac{1}{\det(\sigma)}\cdot\frac{1}{\det(\gamma)}Q\left(\gamma\sigma\begin{bmatrix}
x\\y\end{bmatrix}\right)\\
&=&\frac{1}{\det(\gamma\sigma)}Q\left(\gamma\sigma\begin{bmatrix}
x\\y\end{bmatrix}\right)\\
&=&Q^{\gamma\sigma}.
\end{eqnarray*}
Therefore, $\Gamma_{F,\,1}(N)$ gives a well-defined action on $\mathcal{Q}_F(N,\,d_K)$.
\end{proof}

\begin{remark}\label{actionzero}
Let $Q\in\mathcal{Q}_F(N,\,d_K)$ and $\gamma=\begin{bmatrix}c_1&c_2\\c_3&c_4\end{bmatrix}\in\Gamma_{F,\,1}(N)$. Since
\begin{equation*}
0=Q^\gamma\left(\begin{bmatrix}\omega_{Q^\gamma}\\1\end{bmatrix}\right)=\frac{1}{\det(\gamma)}
Q\left(\gamma\begin{bmatrix}\omega_{Q^\gamma}\\1\end{bmatrix}\right)=
\frac{(c_3\omega_{Q^\gamma}+c_4)^2}
{\det(\gamma)}
Q\left(\begin{bmatrix}\gamma(\omega_{Q^\gamma})\\1\end{bmatrix}\right)
\end{equation*}
and $\gamma(\omega^{Q^\gamma})\in\mathbb{H}$, we get
\begin{equation*}
\omega_Q=\gamma(\omega_{Q^\gamma}).
\end{equation*}
\end{remark}

\begin{definition}\label{equivalencerelation}
The equivalence relation $\sim_{\Gamma_{F,\,1}(N)}$ on $\mathcal{Q}_F(N,\,d_K)$ is given via
Lemma \ref{action} as
\begin{equation*}
Q\sim_{\Gamma_{F,\,1}(N)} Q'\quad
\Longleftrightarrow\quad Q'=Q^\gamma~\textrm{for some}~\gamma\in\Gamma_{F,\,1}(N).
\end{equation*}
We write the set of equivalence classes as
\begin{equation*}
\mathcal{C}_F(N,\,d_K)=\mathcal{Q}_F(N,\,d_K)/\sim_{\Gamma_{F,\,1}(N)}.
\end{equation*}
\end{definition}

\section {Fractional ideals associated with quadratic forms}

Let $I_K(N\mathcal{O}_K)$ be the group of fractional ideals of $K$ relatively prime to $N\mathcal{O}_K$, and
let $P_{K,\,1}(N\mathcal{O}_K)$ be its subgroup given by
\begin{equation*}
P_{K,\,1}(N\mathcal{O}_K)=\{\nu\mathcal{O}_K~|~\nu\in K^\times~\textrm{such that}~
\nu\equiv^*1\Mod{N\mathcal{O}_K}\}.
\end{equation*}
For each prime ideal $\mathfrak{p}$ of $\mathcal{O}_K$, let $n_\mathfrak{p}$
be the exponent of $\mathfrak{p}$ in the prime ideal factorization of $N\mathcal{O}_K$.
For two elements $\nu$ and $\lambda$ of $K^\times$, the multiplicative congruence
$\nu\equiv^*\lambda\Mod{N\mathcal{O}_K}$ means that
\begin{equation*}
v_\mathfrak{p}\left(\frac{\nu}{\lambda}-1\right)\geq n_\mathfrak{p}
\quad\textrm{for every prime ideal $\mathfrak{p}$ of $\mathcal{O}_K$ dividing $N\mathcal{O}_K$},
\end{equation*}
where $v_\mathfrak{p}$ is the exponential valuation corresponding to $\mathfrak{p}$
(\cite[$\S$IV.1]{Janusz}).
Let $\mathcal{C}(N\mathcal{O}_K)$ be the ray class group of $K$ modulo $N\mathcal{O}_K$, namely,
\begin{equation*}
\mathcal{C}(N\mathcal{O}_K)=I_K(N\mathcal{O}_K)/P_{K,\,1}(N\mathcal{O}_K).
\end{equation*}
The aim of this section is to assign
a ray class in $\mathcal{C}(N\mathcal{O}_K)$ to each form class in $\mathcal{C}_F(N,\,d_K)$.

\begin{lemma}\label{aw1o}
If $Q=ax^2+bxy+cy^2\in\mathcal{Q}_F$, then
\begin{equation*}
[a\omega_Q,\,1]_F=\mathcal{O}_K\quad
\Longleftrightarrow\quad
d_Q=\varepsilon^2 d_K~
\textrm{for some}~\varepsilon\in\mathcal{O}_F^\times~\textrm{such that}~\varepsilon>0.
\end{equation*}
\end{lemma}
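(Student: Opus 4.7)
The plan is to reduce both directions of the equivalence to a single change-of-basis computation between the $\mathcal{O}_F$-bases $(a\omega_Q,\,1)$ and $(\omega_K,\,1)$ of $K$ over $F$, and then to read off the conclusion from the effect of the transition matrix on the relative discriminant.

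First, I would check that the inclusion $[a\omega_Q,\,1]_F\subseteq\mathcal{O}_K$ holds regardless of the discriminant condition. Substituting $y=ax$ into $aQ(x,\,1)=0$ shows that $a\omega_Q$ is a root of the monic polynomial $y^2+by+ac\in\mathcal{O}_F[y]$, so $a\omega_Q\in\mathcal{O}_K$. Because $\omega_Q\in\mathbb{H}$ while $F\subset\mathbb{R}$, the pair $\{a\omega_Q,\,1\}$ is $F$-linearly independent, making $[a\omega_Q,\,1]_F$ a free $\mathcal{O}_F$-submodule of $\mathcal{O}_K$ of rank $2$.

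Next I would compute the two relevant relative discriminants using the same identity employed in the proof of Lemma \ref{basis}, namely $d_{K/F}(\alpha_1,\,\alpha_2)=(\alpha_1\overline{\alpha_2}-\overline{\alpha_1}\alpha_2)^2$, to obtain
\begin{equation*}
d_{K/F}(a\omega_Q,\,1)=a^2(\omega_Q-\overline{\omega_Q})^2=d_Q\quad\textrm{and}\quad d_{K/F}(\omega_K,\,1)=d_K.
\end{equation*}
Since $[a\omega_Q,\,1]_F\subseteq[\omega_K,\,1]_F=\mathcal{O}_K$, there exist $p,\,q\in\mathcal{O}_F$ with $a\omega_Q=p\omega_K+q$. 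The transition matrix $M=\begin{bmatrix}p&q\\0&1\end{bmatrix}$ (which is forced to be triangular because both bases share the second vector $1$) then satisfies $d_Q=\det(M)^2d_K=p^2d_K$.

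From this single identity both directions fall out. The equality $[a\omega_Q,\,1]_F=\mathcal{O}_K$ is equivalent to $M\in\mathrm{GL}_2(\mathcal{O}_F)$, which is in turn equivalent to $p\in\mathcal{O}_F^\times$; if this holds, then after replacing $p$ by $-p$ if necessary (which preserves membership in $\mathcal{O}_F^\times$), I obtain a positive unit $\varepsilon$ with $d_Q=\varepsilon^2d_K$. Conversely, given $d_Q=\varepsilon^2d_K$ with $\varepsilon\in\mathcal{O}_F^\times$, the equation $p^2d_K=\varepsilon^2d_K$ together with $d_K\neq0$ forces $p=\pm\varepsilon\in\mathcal{O}_F^\times$, so $M\in\mathrm{GL}_2(\mathcal{O}_F)$ and $[a\omega_Q,\,1]_F=\mathcal{O}_K$. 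I do not anticipate a genuine obstacle here; the only subtlety worth emphasising is the triangular shape of $M$, which pins $\det(M)$ down to $p$ and makes the discriminant transformation immediate.
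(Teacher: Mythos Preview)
Your argument is correct and rests on the same underlying idea as the paper's---relating the two $\mathcal{O}_F$-bases by a matrix over $\mathcal{O}_F$ and reading off the discriminant relation $d_Q=\det(M)^2 d_K$. The organisation differs: the paper handles the two implications separately, using a general $\gamma\in\mathrm{GL}_2(\mathcal{O}_F)$ for the forward direction (with $\varepsilon=\det(\gamma)>0$ deduced from $a\omega_Q,\,\omega_K\in\mathbb{H}$) and, for the converse, the explicit expression $\omega_K=\varepsilon^{-1}(a\omega_Q)+(b\varepsilon^{-1}-b_K)/2$ together with an ad~hoc check that this constant term lies in $\mathcal{O}_F$. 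You instead observe once that the transition matrix is forced to be upper triangular because both bases share the vector~$1$, so both directions collapse to whether the single entry $p=\det(M)$ is a unit. This is slightly cleaner and avoids the integrality check on $(b\varepsilon^{-1}-b_K)/2$; the paper's route has the minor advantage that positivity of $\varepsilon$ falls out of the upper-half-plane condition rather than from a sign flip after the fact.
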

\begin{proof}
Assume that $[a\omega_Q,\,1]_F=\mathcal{O}_K$. Then we have
$[a\omega_Q,\,1]_F=[\omega_K,\,1]_F$ by Lemma \ref{basis}, and hence
\begin{equation*}
\begin{bmatrix}a\omega_Q\\1\end{bmatrix}=
\gamma\begin{bmatrix}\omega_K\\1\end{bmatrix}
\quad\textrm{for some}~\gamma\in\mathrm{GL}_2(\mathcal{O}_F).
\end{equation*}
Here, if we let $\varepsilon=\det(\gamma)$, then we get $\varepsilon>0$ due to the fact
$a\omega_Q,\,\omega_K\in\mathbb{H}$.
Taking determinant on both sides of
\begin{equation*}
\begin{bmatrix}a\omega_Q & a\overline{\omega_Q}\\
1&1\end{bmatrix}=
\gamma\begin{bmatrix}\omega_K & \overline{\omega_K}\\
1&1\end{bmatrix}
\end{equation*}
and squaring, we obtain
$d_Q=\varepsilon^2 d_K$.
\par
Conversely, assume that
\begin{equation}\label{ded}
d_Q=\varepsilon^2 d_K\quad\textrm{for some}~\varepsilon\in\mathcal{O}_F^\times
~\textrm{such that}~\varepsilon>0.
\end{equation}
Since $a\omega_Q$ is a zero of the polynomial
\begin{equation*}
aQ(a^{-1}x,\,1)=x^2+bx+ac\in\mathcal{O}_F[x],
\end{equation*}
we achieve the inclusion
\begin{equation*}
[a\omega_Q,\,1]_F\subseteq\mathcal{O}_K.
\end{equation*}
On the other hand, we see from (\ref{ded}) that
\begin{equation*}
\omega_K=\frac{-b_K+\sqrt{d_K}}{2}=
\varepsilon^{-1}(a\omega_Q)+\frac{b\varepsilon^{-1}-b_K}{2}.
\end{equation*}
Note that $\displaystyle\frac{b\varepsilon^{-1}-b_K}{2}\in\mathcal{O}_F$
owing to the fact $\omega_K,\,\varepsilon^{-1}(a\omega_Q)\in\mathcal{O}_K$. This holds the converse inclusion
\begin{equation*}
\mathcal{O}_K=[\omega_K,\,1]_F\subseteq[a\omega_Q,\,1]_F,
\end{equation*}
and hence $[a\omega_Q,\,1]_F=\mathcal{O}_K$.
\end{proof}

\begin{lemma}\label{fractional}
Let $Q=ax^2+bxy+cy^2\in\mathcal{Q}_F(1,\,d_K)$ and $\mathfrak{a}=[\omega_Q,\,1]_F$. Then,
\begin{enumerate}
\item[\textup{(i)}] $\mathfrak{a}\in I_K(\mathcal{O}_K)$.
\item[\textup{(ii)}] $a\mathfrak{a}\overline{\mathfrak{a}}=\mathcal{O}_K$.
\item[\textup{(iii)}] $\mathfrak{a}\in I_K(N\mathcal{O}_K)$ if and only if
$Q\in\mathcal{Q}_F(N,\,d_K)$.
\end{enumerate}
\end{lemma}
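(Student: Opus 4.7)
The plan is to treat the three parts in order, with Lemma~\ref{aw1o} doing most of the heavy lifting. Since $Q \in \mathcal{Q}_F(1,\,d_K)$ means $d_Q = d_K$, Lemma~\ref{aw1o} (applied with $\varepsilon = 1$) immediately gives the crucial identity $[a\omega_Q,\,1]_F = \mathcal{O}_K$. In particular $a\omega_Q \in \mathcal{O}_K$, and $\mathfrak{a} \subseteq a^{-1}\mathcal{O}_K$, which will be used throughout.

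For (i), I will show that $\mathfrak{a}$, which is evidently a nonzero finitely generated $\mathcal{O}_F$-submodule of $K$, is in fact $\mathcal{O}_K$-stable. Since $\mathcal{O}_K = [a\omega_Q,\,1]_F$ by the setup, it suffices to verify $a\omega_Q\cdot\mathfrak{a}\subseteq\mathfrak{a}$, and this reduces to checking $a\omega_Q\cdot\omega_Q\in\mathfrak{a}$. But $\omega_Q$ is a root of $aT^2 + bT + c$, so $a\omega_Q^2 = -b\omega_Q - c\in\mathcal{O}_F\omega_Q+\mathcal{O}_F = \mathfrak{a}$, and we are done.

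For (ii), I plan a direct expansion: using $\omega_Q + \overline{\omega_Q} = -b/a$ and $\omega_Q\overline{\omega_Q} = c/a$, I multiply out $a\mathfrak{a}\overline{\mathfrak{a}}$ and simplify with the identity $a\overline{\omega_Q} = -b - a\omega_Q$ to get
\begin{equation*}
a\mathfrak{a}\overline{\mathfrak{a}} \;=\; a\omega_Q\mathcal{O}_F + (a,\,b,\,c)\mathcal{O}_F.
\end{equation*}
Primitivity of $Q$ combined with $\mathcal{O}_F$ being a PID gives $(a,\,b,\,c)\mathcal{O}_F = \mathcal{O}_F$, so $a\mathfrak{a}\overline{\mathfrak{a}} = [a\omega_Q,\,1]_F = \mathcal{O}_K$ by Lemma~\ref{aw1o}.

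For (iii), I will argue prime-by-prime. Part (ii) gives $v_\mathfrak{p}(a) = -v_\mathfrak{p}(\mathfrak{a}) - v_\mathfrak{p}(\overline{\mathfrak{a}})$ for every prime $\mathfrak{p}$ of $\mathcal{O}_K$, while the inclusion $\mathfrak{a}\subseteq a^{-1}\mathcal{O}_K$ from (i) yields $v_\mathfrak{p}(\mathfrak{a})\geq -v_\mathfrak{p}(a)$, and applying complex conjugation (which permutes the primes above $N$) gives the analogous lower bound $v_\mathfrak{p}(\overline{\mathfrak{a}}) \geq -v_\mathfrak{p}(a)$. If $\gcd(a,\,N)=1$, then for $\mathfrak{p}\mid N\mathcal{O}_K$ we have $v_\mathfrak{p}(a)=0$, forcing $v_\mathfrak{p}(\mathfrak{a})=v_\mathfrak{p}(\overline{\mathfrak{a}})=0$. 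Conversely, if $v_\mathfrak{p}(\mathfrak{a})=0$ for all $\mathfrak{p}\mid N\mathcal{O}_K$, then by conjugation-invariance of the set of such $\mathfrak{p}$ the same holds for $\overline{\mathfrak{a}}$, so $v_\mathfrak{p}(a)=0$; contracting back to $\mathcal{O}_F$ (a PID) then rules out any common prime factor of $a$ and $N$. The only delicate point I expect is making sure the complex-conjugation permutation of primes is handled cleanly, but since $N\in\mathcal{O}_F$ is fixed by conjugation, the divisor set of $N\mathcal{O}_K$ is stable and no real obstacle arises.
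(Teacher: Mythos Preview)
Your proof is correct and follows essentially the same route as the paper. Parts (i) and (ii) are virtually identical to the paper's argument (the paper works with $\mathfrak{b}=a\mathfrak{a}$ rather than $\mathfrak{a}$ directly, and separates (ii) into two inclusions rather than your single computation, but the content is the same); for (iii) the paper phrases things in terms of divisibility ($\mathfrak{b}\mid a\mathcal{O}_K$) rather than valuations, but this is only a cosmetic difference, and your explicit handling of the conjugation-invariance of the primes above $N$ in fact makes transparent a step the paper leaves implicit.
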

\begin{proof}
Since $d_Q=d_K$, we get
$[a\omega_Q,\,1]_F=\mathcal{O}_K$ by Lemma \ref{aw1o}.
Let
\begin{equation*}
\mathfrak{b}=a\mathfrak{a}=[a\omega_Q,\,a]_F~
\end{equation*}
which is a subset of $\mathcal{O}_K$.
\begin{enumerate}
\item[(i)]
Since
\begin{eqnarray*}
&&a\omega_Q\cdot a\omega_Q=a(a\omega_Q^2)=a(-b\omega_Q-c)=-b(a\omega_Q)-ca\in\mathfrak{b}\quad
\textrm{and}\\
&&a\omega_Q\cdot a=a(a\omega_Q)\in\mathfrak{b},
\end{eqnarray*}
$\mathfrak{b}$ is an ideal of $\mathcal{O}_K=[a\omega_Q,\,1]_F$.
Thus
$\mathfrak{a}=a^{-1}\mathfrak{b}$ is a fractional ideal of $K$.
\item[(ii)]
We derive that
\begin{equation}\label{aabara}
a\mathfrak{a}\overline{\mathfrak{a}}=
a[\omega_Q,\,1]_F[\overline{\omega_Q},\,1]_F
=\mathcal{O}_F(a\omega_Q\overline{\omega_Q})+
\mathcal{O}_F(a\omega_Q)+\mathcal{O}_F(a\overline{\omega_Q})+\mathcal{O}_Fa.
\end{equation}
Since $a\omega_Q^2+b\omega_Q+c=0$, we see that
\begin{equation*}
a\omega_Q\overline{\omega_Q}(=c),\,a\omega_Q,\,a\overline{\omega_Q},\,a\in\mathcal{O}_K,
\end{equation*}
from which we obtain the inclusion $a\mathfrak{a}\overline{\mathfrak{a}}\subseteq\mathcal{O}_K$.
On the other hand, we deduce from (\ref{aabara}) that
\begin{equation*}
a\mathfrak{a}\overline{\mathfrak{a}}\supseteq
\mathcal{O}_Fc+\mathcal{O}_F(a\omega_Q+a\overline{\omega_Q})+\mathcal{O}_Fa
=\mathcal{O}_Fc+\mathcal{O}_Fb+
\mathcal{O}_Fa\ni 1
\end{equation*}
due to the fact $\gcd(a,\,b,\,c)=1$ and Assumption \ref{classnumberone}
(see (\cite[$\S$V.2]{Aluffi}).
This implies the converse inclusion $a\mathfrak{a}\overline{\mathfrak{a}}\supseteq\mathcal{O}_K$, and hence
$a\mathfrak{a}\overline{\mathfrak{a}}=\mathcal{O}_K$.
\item[(iii)] Assume that $\mathfrak{a}$ is relatively prime to $N\mathcal{O}_K$.
Since $\displaystyle\mathfrak{a}\overline{\mathfrak{a}}=\frac{1}{a}\mathcal{O}_K$ by (ii), we must have $\gcd(a,\,N)=1$.
\par
Conversely, assume that $\gcd(a,\,N)=1$. Since
\begin{equation*}
a\mathcal{O}_K=a[a\omega_Q,\,1]_F\subseteq[a\omega_Q,\,a]_F=\mathfrak{b},
\end{equation*}
$\mathfrak{b}$ divides $a\mathcal{O}_K$.
Therefore, $\mathfrak{a}=a^{-1}\mathfrak{b}$ is relatively prime to $N\mathcal{O}_K$.
\end{enumerate}
\end{proof}

\begin{lemma}\label{samezero}
If $Q,\,Q'\in\mathcal{Q}_F(N,\,d_K)$, then
\begin{equation*}
\omega_Q=\omega_{Q'}\quad\Longleftrightarrow\quad
Q=Q'.
\end{equation*}
\end{lemma}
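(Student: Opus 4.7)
The plan is straightforward: the $(\Leftarrow)$ direction is trivial, so the content lies in the converse, and that will follow from a simple coefficient-comparison argument in the extension $K = F(\sqrt{d_K})$.

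First I would write out the explicit formulas
\begin{equation*}
\omega_Q = \frac{-b+\sqrt{d_K}}{2a},\qquad \omega_{Q'}=\frac{-b'+\sqrt{d_K}}{2a'},
\end{equation*}
using the fact that both $Q$ and $Q'$ have the common discriminant $d_K$. Setting these equal and clearing denominators gives
\begin{equation*}
-a'b + a'\sqrt{d_K} \;=\; -ab' + a\sqrt{d_K}.
\end{equation*}
Since $d_K$ is totally negative, $\sqrt{d_K}\notin F$, so $\{1,\sqrt{d_K}\}$ is $F$-linearly independent in $K$. Comparing coefficients in this basis yields $a=a'$ and then $ab'=a'b=ab$; since $a>0$ we conclude $b=b'$.

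Finally, from $d_Q=d_{Q'}=d_K$ we get $b^2-4ac = b'^2-4a'c'$, and combined with $a=a'$ and $b=b'$ this forces $c=c'$. Hence $Q=Q'$.

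There is no real obstacle here; the only subtlety worth mentioning is the use of $\sqrt{d_K}\notin F$, which is guaranteed by total negativity of $d_K$ (equivalently, by $K$ being totally imaginary over the totally real $F$), so that one can legitimately compare the two components over the basis $\{1,\sqrt{d_K}\}$.
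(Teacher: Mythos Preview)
Your argument is correct. The $(\Leftarrow)$ direction is trivial, and for $(\Rightarrow)$ your coefficient comparison in the $F$-basis $\{1,\sqrt{d_K}\}$ is valid: both forms have discriminant exactly $d_K$, both leading coefficients are positive (so the same square root appears in both expressions for $\omega_Q,\omega_{Q'}$), and $\sqrt{d_K}\notin F$ since $d_K$ is totally negative. Cross-multiplying and reading off the $\sqrt{d_K}$-component gives $a=a'$, then the constant component gives $b=b'$, and the discriminant equation finishes with $c=c'$.

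Your route is genuinely more direct than the paper's. The paper proceeds by comparing the minimal polynomials of the common root, obtaining $b/a=b'/a'$ and $c/a=c'/a'$; it then uses that $\mathcal{O}_F$ is a UFD together with primitivity of $Q'$ to deduce $a'\mid a$, and symmetrically $a\mid a'$, so $a=\varepsilon a'$ for a unit $\varepsilon>0$. This yields $Q=\varepsilon Q'$, and only at the end does the equality $d_Q=d_{Q'}=d_K$ force $\varepsilon^2=1$, hence $\varepsilon=1$. In effect, the paper's argument would still show $Q=\varepsilon Q'$ for any two primitive forms sharing a root, without assuming equal discriminants in advance; the cost is the detour through divisibility and primitivity. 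By contrast, you exploit the common discriminant from the outset, which lets you bypass the UFD/primitivity step entirely and settle the matter by linear independence alone.
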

\begin{proof}
The ``$\Longleftarrow$" part is obvious.
\par
Conversely, assume that $z=\omega_Q=\omega_{Q'}$.
If we let $Q(x,\,y)=ax^2+bxy+cy^2$ and $Q'(x,\,y)=a'x^2+b'xy+c'y^2$, then we have
\begin{equation*}
\min(z,\,F)=x^2+\frac{b}{a}x+\frac{c}{a}=
x^2+\frac{b'}{a'}x+\frac{c'}{a'},
\end{equation*}
and so
\begin{equation}\label{baca}
b=a\cdot\frac{b'}{a'}\quad\textrm{and}\quad c=a\cdot\frac{c'}{a'}.
\end{equation}
Here we recall that $\mathcal{O}_F$ is a UFD by Assumption \ref{classnumberone}.
We claim that $a'\,|\,a$. Otherwise,
$a'\nmid a$ and
there is an irreducible element $\nu$ in $\mathcal{O}_F$ such that
$\nu^n\,|\,a'$ but $\nu^n\nmid a$ for some $n\in\mathbb{Z}_{>0}$. We then
attain by (\ref{baca}) and the fact $b,\,c\in\mathcal{O}_F$ that
$\nu\,|\, b'$ and $\nu\,|\,c'$, which contradicts $\gcd(a',\,b',\,c')=1$.
In a similar way, we see $a\,|\,a'$, from which we obtain $a=\varepsilon a'$ for some
$\varepsilon\in\mathcal{O}_F^\times$ such that $\varepsilon>0$ owing to the fact
$a,\,a'>0$. It then follows from (\ref{baca}) that $b=\varepsilon b'$ and
$c=\varepsilon c'$, and hence $Q=\varepsilon Q'$. Moreover, since
\begin{equation*}
d_K=d_Q=\varepsilon^2 d_{Q'}=\varepsilon^2 d_K
\end{equation*}
and $\varepsilon>0$, we achieve $Q=Q'$, as desired.
\end{proof}

\begin{proposition}\label{qqww}
If $Q,\,Q'\in\mathcal{Q}_F(N,\,d_K)$, then we have
\begin{equation*}
[Q]=[Q']~\textrm{in}~
\mathcal{C}_F(N,\,d_K)\quad
\Longleftrightarrow\quad
[[\omega_Q,\,1]_F]=[[\omega_{Q'},\,1]_F]~\textrm{in}~\mathcal{C}(N\mathcal{O}_K).
\end{equation*}
\end{proposition}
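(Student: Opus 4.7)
The plan is to identify both sides of the claim with the membership of a single auxiliary element of $K^\times$ in $P_{K,\,1}(N\mathcal{O}_K)$. Concretely, whenever $\gamma=\begin{bmatrix} c_1 & c_2 \\ c_3 & c_4 \end{bmatrix}\in\mathrm{GL}_2(\mathcal{O}_F)$ carries $\omega_{Q'}$ to $\omega_Q$ as a M\"obius transformation, the transformation of $\mathcal{O}_F$-bases of fractional ideals yields
\[
[\omega_Q,\,1]_F \;=\; (c_3\omega_{Q'}+c_4)^{-1}\,[\omega_{Q'},\,1]_F,
\]
so the whole problem is controlled by the scalar $\lambda_\gamma := c_3\omega_{Q'}+c_4\in K^\times$. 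The strategy is then to show that $\gamma\in\Gamma_{F,\,1}(N)$ if and only if $\lambda_\gamma\mathcal{O}_K\in P_{K,\,1}(N\mathcal{O}_K)$, from which the proposition follows in both directions.

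For the ``$\Rightarrow$'' direction, assume $Q'=Q^\gamma$ with $\gamma\in\Gamma_{F,\,1}(N)$. Remark \ref{actionzero} gives $\omega_Q=\gamma(\omega_{Q'})$ and hence the displayed identity. To verify that $\lambda_\gamma\mathcal{O}_K\in P_{K,\,1}(N\mathcal{O}_K)$, rewrite $\omega_{Q'}$ using that $a'\omega_{Q'}$ and $\omega_K$ both lie in $\mathcal{O}_K$ and differ by $(b_K-b')/2$, which is therefore in $\mathcal{O}_K\cap F=\mathcal{O}_F$; setting $\beta':=(b_K-b')/2\in\mathcal{O}_F$, we have $\omega_{Q'}=(\omega_K+\beta')/a'$, and thus $\lambda_\gamma=A+B\omega_K$ with $A=(c_3\beta'+a'c_4)/a'$ and $B=c_3/a'$. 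At each prime $\mathfrak{q}$ of $\mathcal{O}_F$ dividing $N$, $a'$ is a local unit by $\gcd(a',N)=1$, so the hypotheses $c_3\in N\mathcal{O}_F$ and $c_4\equiv\zeta\Mod{N\mathcal{O}_F}$ translate into $A-\zeta,\,B\in N\mathcal{O}_{F,\mathfrak{q}}$. Reading off in the $\mathcal{O}_{F,\mathfrak{q}}$-basis $\{1,\omega_K\}$ of the localization of $\mathcal{O}_K$, this forces $\lambda_\gamma\equiv\zeta\Mod{\mathfrak{p}^{n_\mathfrak{p}}}$ at every prime $\mathfrak{p}\mid N\mathcal{O}_K$; since $\zeta\in\mathcal{O}_F^\times\subseteq\mathcal{O}_K^\times$, this is precisely $\lambda_\gamma/\zeta\equiv^*1\Mod{N\mathcal{O}_K}$, as required.

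For the ``$\Leftarrow$'' direction, pick a generator $\mu\equiv^*1\Mod{N\mathcal{O}_K}$ of the principal ideal $[\omega_Q,\,1]_F\,[\omega_{Q'},\,1]_F^{-1}$. Because $\{\omega_Q,1\}$ and $\{\mu\omega_{Q'},\mu\}$ are both $\mathcal{O}_F$-bases of $[\omega_Q,\,1]_F$, there is a unique $\gamma\in\mathrm{GL}_2(\mathcal{O}_F)$ with $\begin{bmatrix}\omega_Q\\ 1\end{bmatrix}=\gamma\begin{bmatrix}\mu\omega_{Q'}\\ \mu\end{bmatrix}$, which yields $\omega_Q=\gamma(\omega_{Q'})$ and $\mu^{-1}=\lambda_\gamma$; the positivity $\det(\gamma)>0$ is immediate from $\omega_Q,\omega_{Q'}\in\mathbb{H}$. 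Running the expansion $\mu^{-1}=A+B\omega_K$ as before and using $\mu^{-1}-1\in N\mathcal{O}_K$ locally at each $\mathfrak{q}\mid N$, the decomposition $N\mathcal{O}_{F,\mathfrak{q}}\oplus N\mathcal{O}_{F,\mathfrak{q}}\,\omega_K$ of the localized $N\mathcal{O}_K$ forces $A-1,\,B\in N\mathcal{O}_{F,\mathfrak{q}}$, hence $c_3\in N\mathcal{O}_F$ and $c_4\equiv 1\Mod{N\mathcal{O}_F}$; that is, $\gamma\in\Gamma_{F,\,1}(N)$ with $\zeta=1$. Finally, Remark \ref{actionzero} gives $\omega_{Q^\gamma}=\gamma^{-1}(\omega_Q)=\omega_{Q'}$, and Lemma \ref{samezero} promotes this to $Q^\gamma=Q'$.

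The main obstacle will be the clean translation of the multiplicative ray congruence $\mu\equiv^*1\Mod{N\mathcal{O}_K}$---originally phrased through valuations at primes of $\mathcal{O}_K$ above $N$---into integral congruences on the entries $c_3,c_4\in\mathcal{O}_F$ defining $\Gamma_{F,\,1}(N)$. A naive use of the basis $\{1,\sqrt{d_K}\}$ of $K/F$ causes difficulties at primes $\mathfrak{q}\mid N$ that are either ramified in $K/F$ or lie above $2$, since $\sqrt{d_K}$ then fails to provide a clean integral basis of $\mathcal{O}_{K,\mathfrak{p}}$ over $\mathcal{O}_{F,\mathfrak{q}}$. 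Working instead with the $\mathcal{O}_F$-integral basis $\{1,\omega_K\}$ of $\mathcal{O}_K$ supplied by Lemma \ref{basis}---so that $N\mathcal{O}_K=N\mathcal{O}_F\oplus N\mathcal{O}_F\,\omega_K$ as $\mathcal{O}_F$-modules---makes the translation uniform across all primes and removes the $2$-adic and ramification pathologies in one stroke.
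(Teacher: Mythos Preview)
Your proof is correct and follows essentially the same approach as the paper: both directions hinge on showing that the scalar $c_3\omega_{Q'}+c_4$ lies in $P_{K,1}(N\mathcal{O}_K)$ precisely when $\gamma\in\Gamma_{F,1}(N)$, and both finish with Remark~\ref{actionzero} and Lemma~\ref{samezero}. The only cosmetic difference is that the paper multiplies through by $a'$ and reads off the congruences globally using the $\mathcal{O}_F$-basis $\{a'\omega_{Q'},1\}$ of $\mathcal{O}_K$ (from Lemma~\ref{aw1o}), whereas you localize at each prime $\mathfrak{q}\mid N$ and use the basis $\{1,\omega_K\}$; since $a'\omega_{Q'}=\omega_K+\beta'$ these are the same computation in slightly different clothing.
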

\begin{proof}
By Lemma \ref{fractional}, we get
\begin{equation*}
[\omega_Q,\,1]_F,\,[\omega_{Q'},\,1]_F\in I_K(N\mathcal{O}_K).
\end{equation*}
Assume that $[Q]=[Q']$ in $\mathcal{C}_F(N,\,d_K)$, that is,
\begin{equation*}
Q'=Q^\gamma\quad\textrm{for some}~\gamma=\begin{bmatrix}c_1&c_2\\c_3&c_4\end{bmatrix}\in\Gamma_{F,\,1}(N).
\end{equation*}
We then derive $\omega_Q=\gamma(\omega_{Q'})$ by Remark \ref{actionzero}, and so
\begin{equation}\label{w1gw1}
[\omega_Q,\,1]_F=[\gamma(\omega_{Q'}),\,1]_F
=\frac{1}{c_3\omega_{Q'}+c_4}
[c_1\omega_{Q'}+c_2,\,c_3\omega_{Q'}+c_4]_F
=\frac{1}{c_3\omega_{Q'}+c_4}
[\omega_{Q'},\,1]_F.
\end{equation}
Let $a'$ be the coefficient of $x^2$ in $Q'$. We note by the facts $a'\omega_{Q'}\in\mathcal{O}_K$ and
\begin{equation*}
c_3\equiv0,\,
c_4\equiv\zeta\Mod{N\mathcal{O}_F}\quad\textrm{for some}~
\zeta\in\mathcal{O}_F^\times
\end{equation*}
that
\begin{equation*}
a'(c_3\omega_{Q'}+c_4)=c_3(a'\omega_{Q'})+a'c_4\equiv a'\zeta\Mod{N\mathcal{O}_K}.
\end{equation*}
Furthermore, since $\gcd(a',\,N)=1$, we obtain
\begin{equation*}
c_3\omega_{Q'}+c_4\equiv^*\zeta\Mod{N\mathcal{O}_K}.
\end{equation*}
Thus it follows that $(c_3\omega_{Q'}+c_4)\mathcal{O}_K\in P_{K,\,1}(N\mathcal{O}_K)$, and hence
\begin{equation*}
[[\omega_Q,\,1]_F]=[[\omega_{Q'},\,1]_F]~\textrm{in}~\mathcal{C}(N\mathcal{O}_K)
\end{equation*}
by (\ref{w1gw1}).
\par
Conversely, assume that $[[\omega_Q,\,1]_F]=[[\omega_{Q'},\,1]_F]$ in $\mathcal{C}(N\mathcal{O}_K)$, and so
\begin{equation*}
[\omega_{Q'},\,1]_F=\nu[\omega_Q,\,1]_F=[\nu\omega_Q,\,\nu]_F
\quad\textrm{for some}~\nu\in K^\times~\textrm{such that}~\nu\equiv^*1\Mod{N\mathcal{O}_K}.
\end{equation*}
We then deduce
\begin{equation*}
\begin{bmatrix}\nu\omega_Q\\\nu\end{bmatrix}=
\gamma\begin{bmatrix}
\omega_{Q'}\\1
\end{bmatrix}
\quad\textrm{for some}~\gamma=\begin{bmatrix}c_1&c_2\\c_3&c_4\end{bmatrix}\in\mathrm{GL}_2(\mathcal{O}_F).
\end{equation*}
Since
\begin{equation}\label{wnwngw}
\omega_Q=\frac{\nu\omega_Q}{\nu}=\frac{c_1\omega_{Q'}+c_2}{c_3\omega_{Q'}+c_4}
\end{equation}
and $\omega_Q,\,\omega_{Q'}\in
\mathbb{H}$, we get $\det(\gamma)>0$ and $\omega_Q=\gamma(\omega_{Q'})$. And, we have
\begin{equation*}
c_3\omega_{Q'}+c_4=\nu\equiv^*1\Mod{N\mathcal{O}_K}.
\end{equation*}
Letting $a'$ be the coefficient of $x^2$ in $Q'$, we claim by the fact $a'\omega_{Q'}\in\mathcal{O}_K$ that
\begin{equation}\label{cawac}
c_3(a'\omega_{Q'})+a'c_4\equiv a'\Mod{N\mathcal{O}_K}.
\end{equation}
Since $\mathcal{O}_K=[a'\omega_{Q'},\,1]_F$ by Lemma \ref{aw1o}
and $\gcd(a',\,N)=1$, the congruence (\ref{cawac}) yields
$c_3\equiv0,\,c_4\equiv1\Mod{N\mathcal{O}_F}$, which shows that $\gamma\in\Gamma_{F,\,1}(N)$.
Moreover, since
\begin{equation*}
\gamma(\omega_{Q^\gamma})=\omega_Q=\gamma(\omega_{Q'})
\end{equation*}
by Remark \ref{actionzero} and (\ref{wnwngw}), we attain $\omega_{Q^\gamma}=\omega_{Q'}$, and so
$Q'=Q^\gamma$ by Lemma \ref{samezero}.
Therefore we conclude $[Q]=[Q']$ in $\mathcal{C}_F(N,\,d_K)$.
\end{proof}

\begin{definition}
Define a map
\begin{equation*}
\begin{array}{ccccc}
\phi_{K,\,N}&:&\mathcal{C}_F(N,\,d_K)&\rightarrow&\mathcal{C}(N\mathcal{O}_K)\\
&&\mathrm{[}Q\mathrm{]}&\mapsto&[[\omega_Q,\,1]_F]
\end{array}
\end{equation*}
which is a well-defined injection by Proposition \ref{qqww}.
\end{definition}

\section {Form class groups as ray class groups}

In this section, we shall prove our first main theorem which asserts that
$\mathcal{C}_F(N,\,d_K)$ can be
regarded as a group isomorphic to the ray class group $\mathcal{C}(N\mathcal{O}_K)$.

\begin{lemma}\label{wz}
Let $z\in(K\cap\mathbb{H})\setminus F$.
If $[z,\,1]_F$ is a fractional ideal of $K$, then there is a form $Q$ in $\mathcal{Q}_F(1,\,d_K)$ with
$\omega_Q=z$. Moreover, if $[z,\,1]_F$ is relatively prime to $N\mathcal{O}_K$, then $Q$ belongs to $\mathcal{Q}_F(N,\,d_K)$.
\end{lemma}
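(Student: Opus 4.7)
The plan is to construct $Q$ explicitly from the minimal polynomial of $z$ and the $\mathcal{O}_K$-module structure of $[z,1]_F$, then derive the ``moreover'' part from Lemma \ref{fractional}(iii). Write $\min(z,F) = x^2 + Bx + C$ with $B, C \in F$ and $\min(\omega_K, F) = x^2 + b_K x + c_K$. The hypothesis that $[z,1]_F = \mathcal{O}_F z + \mathcal{O}_F$ is an $\mathcal{O}_K$-module forces $\omega_K \in [z,1]_F$, so one can uniquely write $\omega_K = \alpha z + \beta$ with $\alpha, \beta \in \mathcal{O}_F$ and $\alpha \neq 0$ (since $\omega_K \notin F$). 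Inverting this to obtain $z = (\omega_K - \beta)/\alpha$ and reading off the elementary symmetric functions of $z, \bar z$ gives
\[
B = \frac{2\beta + b_K}{\alpha}, \qquad C = \frac{\beta^2 + b_K \beta + c_K}{\alpha^2}.
\]

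The candidate form is $Q(x,y) = ax^2 + bxy + cy^2$ with $a = \alpha$, $b = aB = 2\beta + b_K$, and $c = aC = (\beta^2 + b_K\beta + c_K)/\alpha$. The integrality of $b$ is immediate, while $c \in \mathcal{O}_F$ is forced by the other half of $\mathcal{O}_K$-stability: writing $\omega_K z = (\beta - \alpha B) z - \alpha C$ via $z^2 = -Bz - C$ and requiring membership in $\mathcal{O}_F z + \mathcal{O}_F$ yields $c = \alpha C \in \mathcal{O}_F$. A short calculation then gives $d_Q = (2\beta + b_K)^2 - 4(\beta^2 + b_K\beta + c_K) = b_K^2 - 4c_K = d_K$, and since $Q(x,1) = a \cdot \min(z,F)(x)$ its root in $\mathbb{H}$ is $z$, so $\omega_Q = z$. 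Positivity $a > 0$ in the identity embedding is the identity $\alpha = \mathrm{Im}(\omega_K)/\mathrm{Im}(z) > 0$ obtained by taking imaginary parts of $\omega_K = \alpha z + \beta$.

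The main obstacle is the primitivity condition $\gcd(a, b, c) = 1$: note that $(a, b, c) = aJ$ with $J := \mathcal{O}_F + B\mathcal{O}_F + C\mathcal{O}_F$, so it suffices to prove $\alpha J = \mathcal{O}_F$. I plan a norm-theoretic argument. A direct computation using $\bar z = -B - z$ and $z\bar z = C$ gives $[z,1]_F \overline{[z,1]_F} = J + \mathcal{O}_F z$ as an $\mathcal{O}_F$-module. On the other hand, this product is a $\mathrm{Gal}(K/F)$-invariant fractional ideal of the Dedekind domain $\mathcal{O}_K$, hence equals $I\mathcal{O}_K$ for some fractional ideal $I$ of $\mathcal{O}_F$; the PID hypothesis on $\mathcal{O}_F$ gives $I = (n)$ for some $n \in F^\times$. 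Expressing both descriptions in the $F$-basis $\{1, \omega_K\}$ via $z = (\omega_K - \beta)/\alpha$, matching the sets of $\omega_K$-coefficients forces $\alpha^{-1}\mathcal{O}_F = (n)$, and matching elements with vanishing $\omega_K$-coefficient forces $J = n\mathcal{O}_F$. Combining yields $\alpha J = \mathcal{O}_F$, so $Q \in \mathcal{Q}_F(1, d_K)$ with $\omega_Q = z$.

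For the ``moreover'' part, if $[z,1]_F$ is relatively prime to $N\mathcal{O}_K$, then applying Lemma \ref{fractional}(iii) to the constructed form $Q$ (which satisfies $[\omega_Q, 1]_F = [z,1]_F$) gives $\gcd(a, N) = 1$, and hence $Q \in \mathcal{Q}_F(N, d_K)$.
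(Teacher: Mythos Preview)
Your argument is correct and takes a genuinely different route from the paper. The paper first writes down \emph{some} primitive form $Q_0\in\mathcal{Q}_F$ with $\omega_{Q_0}=z$ (which exists simply by clearing denominators in $\min(z,F)$ and dividing out the gcd), then computes that the multiplier ring of $[z,1]_F$ is contained in $[a\omega_{Q_0},1]_F$; since that multiplier ring equals $\mathcal{O}_K$ by hypothesis, one gets $[a\omega_{Q_0},1]_F=\mathcal{O}_K$, hence $d_{Q_0}=\varepsilon^2 d_K$ by Lemma~\ref{aw1o}, and $Q=\varepsilon^{-1}Q_0$ does the job. In that approach primitivity is free from the outset and only the discriminant needs adjusting. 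Your approach inverts the trade-off: by building $Q$ from the relation $\omega_K=\alpha z+\beta$ you hit $d_Q=d_K$ on the nose and must instead work for primitivity, which you obtain via the norm-ideal identity $[z,1]_F\overline{[z,1]_F}=n\mathcal{O}_K$. Both are clean; yours is more explicit, the paper's leans on already-proved lemmas.

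One point to tighten: the sentence ``this product is a $\mathrm{Gal}(K/F)$-invariant fractional ideal \ldots\ hence equals $I\mathcal{O}_K$'' is not valid as stated, since a prime $\mathfrak{P}$ above a ramified $\mathfrak{p}$ is Galois-invariant yet not extended from $\mathcal{O}_F$. What you actually need (and have) is that $\mathfrak{a}\overline{\mathfrak{a}}=N_{K/F}(\mathfrak{a})\mathcal{O}_K$ for any fractional ideal $\mathfrak{a}$ of $K$; this follows from the prime-by-prime computation (in the ramified case $v_{\mathfrak{P}}(\mathfrak{a}\overline{\mathfrak{a}})=2v_{\mathfrak{P}}(\mathfrak{a})$ is even, so the $\mathfrak{P}$-part is a power of $\mathfrak{p}\mathcal{O}_K$). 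With that correction the rest of your matching argument in the basis $\{1,\omega_K\}$ goes through as written.
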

\begin{proof}
Since $z\in(K\cap\mathbb{H})\setminus F$, $[K:F]=2$ and $\mathcal{O}_F$ is a UFD by Assumption \ref{classnumberone},
there exists a form $Q_0=ax^2+bxy+cy^2$ in $\mathcal{Q}_F$ such that
$\omega_{Q_0}=z$. Consider the set
\begin{equation*}
S=\{\nu\in K~|~\nu[\omega_{Q_0},\,1]_F\subseteq[\omega_{Q_0},\,1]_F\}.
\end{equation*}
If $\nu\in S$, then
\begin{eqnarray}
\nu\omega_{Q_0}\in[\omega_{Q_0},\,1]_F\phantom{.}\quad\textrm{and}\label{nww1}\\
\nu\in[\omega_{Q_0},\,1]_F.\phantom{\quad\textrm{and}}\label{nw1}
\end{eqnarray}
Hence we get by (\ref{nw1})
\begin{equation}\label{nuwv}
\nu=u\omega_{Q_0}+v\quad\textrm{for some}~u,\,v\in\mathcal{O}_F,
\end{equation}
and so we derive that
\begin{eqnarray*}
\nu\omega_{Q_0}&=&(u\omega_{Q_0}+v)\omega_{Q_0}\\
&=&u\left(-\frac{b}{a}\omega_{Q_0}-\frac{c}{a}\right)+v\omega_{Q_0}\quad\textrm{because}~
a\omega_{Q_0}^2+b\omega_{Q_0}+c=0\\
&=&\left(-\frac{ub}{a}+v\right)\omega_{Q_0}-\frac{uc}{a}.
\end{eqnarray*}
This holds by (\ref{nww1}) that
\begin{equation*}
\frac{ub}{a},\,\frac{uc}{a}\in\mathcal{O}_F,
\end{equation*}
so we have $a\,|\,u$ by the fact $\gcd(a,\,b,\,c)=1$.
It then follows
from (\ref{nuwv})
\begin{equation*}
\nu=u\omega_{Q_0}+v\in[a\omega_{Q_0},\,1]_F,
\end{equation*}
from which we obtain the inclusion
\begin{equation}\label{saw1}
S\subseteq[a\omega_{Q_0},\,1]_F.
\end{equation}
\par
On the other hand, since we are assuming that $[z,\,1]_F=
[\omega_{Q_0},\,1]_F$ is a fractional ideal of $K$, we attain
$S=\mathcal{O}_K$. Furthermore, since $a\omega_{Q_0}$ is a zero of the polynomial
$aQ_0(a^{-1}x,\,1)=x^2+bx+ac$ ($\in\mathcal{O}_F[x]$), we get by (\ref{saw1}) that
\begin{equation*}
\mathcal{O}_K=S\subseteq[a\omega_{Q_0},\,1]_F\subseteq\mathcal{O}_K.
\end{equation*}
This yields $[a\omega_{Q_0},\,1]_F=\mathcal{O}_K$, and hence
\begin{equation*}
d_{Q_0}=\varepsilon^2d_K\quad\textrm{for some}~\varepsilon\in\mathcal{O}_F^\times~
\textrm{such that}~\varepsilon>0
\end{equation*}
by Lemma \ref{aw1o}. Observe that the form $Q=\displaystyle\frac{1}{\varepsilon}Q_0$
belongs to $\mathcal{Q}_F(1,\,d_K)$ and satisfies
\begin{equation*}
\omega_Q=\omega_{Q_0}=z.
\end{equation*}
\par
The second assertion follows immediately from Lemma \ref{fractional} (iii).
\end{proof}

\begin{lemma}[Strong approximation]\label{strong}
Let $E$ be a number field with ring of integers $\mathcal{O}_E$ and
$\mathfrak{m}$ be a nontrivial ideal of $\mathcal{O}_E$. Then, the
reduction $\mathrm{SL}_2(\mathcal{O}_E)\rightarrow\mathrm{SL}_2(\mathcal{O}_E/\mathfrak{m})$ is surjective.
\end{lemma}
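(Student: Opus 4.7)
My plan is to lift the matrix entrywise and then correct it in two stages using the Chinese Remainder Theorem, with the key input being that the determinant-one condition gives a strong coprimality statement inside $\mathcal{O}_E$. Given $\bar{A}=\begin{bmatrix}\bar{a}&\bar{b}\\ \bar{c}&\bar{d}\end{bmatrix}\in\mathrm{SL}_2(\mathcal{O}_E/\mathfrak{m})$, I pick any entrywise lift $a,b,c,d\in\mathcal{O}_E$. The relation $\bar{a}\bar{d}-\bar{b}\bar{c}=1$ rewrites as $ad-bc-1\in\mathfrak{m}$, which says $(c,d)+\mathfrak{m}=\mathcal{O}_E$ as ideals of $\mathcal{O}_E$; this is essentially the only consequence of the hypothesis I will need.

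Stage one: I replace $(c,d)$ by a pair $(c',d')$ with the same residues modulo $\mathfrak{m}$ and with $(c',d')=\mathcal{O}_E$. If $d=0$ then $(c)+\mathfrak{m}=\mathcal{O}_E$, so there exist $r\in\mathcal{O}_E$ and $\delta\in\mathfrak{m}$ with $rc+\delta=1$; I take $c'=c$ and $d'=\delta$. If $d\ne 0$, the ideal $(d)$ has only finitely many prime divisors $\mathfrak{p}$. For each such $\mathfrak{p}$ with $\mathfrak{p}\nmid\mathfrak{m}$ I adjoin the congruence $c'\equiv 1\Mod{\mathfrak{p}}$ to $c'\equiv c\Mod{\mathfrak{m}}$ and invoke CRT. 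For each $\mathfrak{p}$ with $\mathfrak{p}\mid\mathfrak{m}$ the identity $(c,d)+\mathfrak{m}=\mathcal{O}_E$ forces $c\notin\mathfrak{p}$, so the single constraint modulo $\mathfrak{m}$ already ensures $c'\notin\mathfrak{p}$. Setting $d'=d$ makes $c'$ coprime to $(d)$, i.e.\ $(c',d')=\mathcal{O}_E$.

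Stage two: I correct the top row. Choose any $x_0,y_0\in\mathcal{O}_E$ with $x_0d'-y_0c'=1$; every $\mathcal{O}_E$-solution of $a'd'-b'c'=1$ is then of the form $(x_0+tc',\,y_0+td')$ for some $t\in\mathcal{O}_E$. It thus suffices to find $\bar{t}\in\mathcal{O}_E/\mathfrak{m}$ such that $\bar{t}\bar{c}'=\bar{a}-x_0$ and $\bar{t}\bar{d}'=\bar{b}-y_0$. Since $(c',d')=\mathcal{O}_E$, there exist $\bar{u},\bar{v}\in\mathcal{O}_E/\mathfrak{m}$ with $\bar{u}\bar{c}'+\bar{v}\bar{d}'=1$, and combining this with the relation $(\bar{a}-x_0)\bar{d}'=(\bar{b}-y_0)\bar{c}'$ (which holds because both $(\bar{a},\bar{b})$ and $(x_0,y_0)$ are solutions modulo $\mathfrak{m}$) shows that $\bar{t}:=\bar{u}(\bar{a}-x_0)+\bar{v}(\bar{b}-y_0)$ does the job; any lift to $\mathcal{O}_E$ completes the construction.

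I expect the main obstacle to be stage one, specifically the compatibility of $c'\equiv c\Mod{\mathfrak{m}}$ with the desired coprimality to $(d)$ at primes dividing both $(d)$ and $\mathfrak{m}$; this is precisely where the full strength of the determinant hypothesis is used. Once the bottom row has been forced to generate the unit ideal in $\mathcal{O}_E$, the rest is routine Bezout manipulation over $\mathcal{O}_E$ and $\mathcal{O}_E/\mathfrak{m}$.
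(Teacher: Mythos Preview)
Your argument is correct. The two stages are sound: in stage one the CRT moduli $\mathfrak{m}$ and the primes $\mathfrak{p}\mid(d)$ with $\mathfrak{p}\nmid\mathfrak{m}$ are pairwise coprime, so the simultaneous congruences for $c'$ can indeed be solved; and for primes dividing both $(d)$ and $\mathfrak{m}$ the determinant identity forces $c\notin\mathfrak{p}$, exactly as you say. In stage two the B\'ezout parametrisation of solutions to $a'd'-b'c'=1$ over the Dedekind domain $\mathcal{O}_E$ is valid precisely because $(c',d')=\mathcal{O}_E$, and your verification that $\bar t=\bar u(\bar a-\bar x_0)+\bar v(\bar b-\bar y_0)$ satisfies both congruences is a clean use of the compatibility relation $(\bar a-\bar x_0)\bar d'=(\bar b-\bar y_0)\bar c'$.

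As for comparison with the paper: the paper does not supply its own proof of this lemma but simply cites Hurwitz's collected works. Your proof is therefore strictly more informative here, giving a direct and elementary argument in place of a reference. The method you use---lifting the bottom row to a unimodular row via CRT and then parametrising the top row---is the standard modern approach to this particular case of strong approximation, and is entirely appropriate for the paper's purposes (where the lemma is only invoked for $\mathrm{SL}_2$ over the full ring of integers).
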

\begin{proof}
See \cite[pp. 244--268]{Hurwitz}.
\end{proof}

\begin{proposition}\label{surjective}
The map $\phi_{K,\,N}$ is surjective.
\end{proposition}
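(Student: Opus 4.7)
The plan is, for each $[\mathfrak{c}]\in\mathcal{C}(N\mathcal{O}_K)$, to produce a representative of $[\mathfrak{c}]$ of the form $[z,1]_F$ with $z\in(K\cap\mathbb{H})\setminus F$, and then invoke Lemma \ref{wz} to obtain a form $Q\in\mathcal{Q}_F(N,d_K)$ with $\omega_Q=z$; this immediately gives $\phi_{K,N}([Q])=[[\omega_Q,1]_F]=[[z,1]_F]=[\mathfrak{c}]$. Since $\mathcal{O}_F$ is a PID (Assumption \ref{classnumberone}) and $\mathfrak{c}$ is a torsion-free $\mathcal{O}_F$-module of rank $[K:F]=2$, I begin by writing $\mathfrak{c}=[\alpha,\beta]_F$ for some $\alpha,\beta\in K^\times$. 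Because $K$ is totally imaginary over the totally real $F$, the ratio $\alpha/\beta\in K\setminus F$ is non-real; after swapping $\alpha$ and $\beta$ if necessary I may assume $\alpha/\beta\in\mathbb{H}$.

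The heart of the argument is to modify this $\mathcal{O}_F$-basis so that the second basis element $\beta'$ satisfies $\beta'\equiv^*1\Mod{N\mathcal{O}_K}$. Since $\mathfrak{c}+N\mathcal{O}_K=\mathcal{O}_K$ by coprimality, there exists $\lambda\in\mathfrak{c}$ with $\lambda\equiv 1\Mod{N\mathcal{O}_K}$; I would write $\lambda=c_3\alpha+c_4\beta$ and verify that $(c_3,c_4)+N\mathcal{O}_F=\mathcal{O}_F$. Indeed, if a prime $\mathfrak{p}\mid N$ of $\mathcal{O}_F$ divided $(c_3,c_4)$, then $\lambda$ would lie in $\mathfrak{p}\mathfrak{c}$ and so $v_\mathfrak{q}(\lambda)\geq e(\mathfrak{q}/\mathfrak{p})\geq 1$ for every prime $\mathfrak{q}$ of $\mathcal{O}_K$ above $\mathfrak{p}$, contradicting $\lambda\equiv 1\Mod{\mathfrak{q}}$. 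Hence the image of $(c_3,c_4)$ in $\mathcal{O}_F/N\mathcal{O}_F$ is a unimodular row, which completes to an element of $\mathrm{SL}_2(\mathcal{O}_F/N\mathcal{O}_F)$ and lifts, by Lemma \ref{strong}, to some $\gamma=\begin{bmatrix}c_1&c_2\\\tilde c_3&\tilde c_4\end{bmatrix}\in\mathrm{SL}_2(\mathcal{O}_F)$ with $\tilde c_i\equiv c_i\Mod{N\mathcal{O}_F}$ for $i=3,4$.

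Setting $(\alpha',\beta')=(c_1\alpha+c_2\beta,\,\tilde c_3\alpha+\tilde c_4\beta)$, one still has $\mathfrak{c}=[\alpha',\beta']_F$, while $\beta'-\lambda\in N\mathfrak{c}$. For each prime $\mathfrak{q}$ of $\mathcal{O}_K$ dividing $N\mathcal{O}_K$, coprimality of $\mathfrak{c}$ with $N\mathcal{O}_K$ gives $v_\mathfrak{q}(\beta'-\lambda)\geq n_\mathfrak{q}$, which together with $v_\mathfrak{q}(\lambda-1)\geq n_\mathfrak{q}$ yields $\beta'\equiv^* 1\Mod{N\mathcal{O}_K}$. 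Set $z=\alpha'/\beta'=\gamma(\alpha/\beta)$; since $\det\gamma=1>0$ and $\alpha/\beta\in\mathbb{H}$, we have $z\in\mathbb{H}$, and $[z,1]_F=(\beta')^{-1}\mathfrak{c}$ is a fractional ideal in $I_K(N\mathcal{O}_K)$ whose ray class equals $[\mathfrak{c}]$. Lemma \ref{wz} then produces the required $Q\in\mathcal{Q}_F(N,d_K)$ with $\omega_Q=z$.

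The main obstacle I expect is the basis-adjustment step: upgrading an arbitrary element $\lambda\in\mathfrak{c}$ with $\lambda\equiv 1\Mod{N\mathcal{O}_K}$ into a genuine $\mathcal{O}_F$-basis element of $\mathfrak{c}$. Its coordinates $(c_3,c_4)$ are only guaranteed to generate an ideal of $\mathcal{O}_F$ coprime to $N$, not the unit ideal, and strong approximation (Lemma \ref{strong}) is precisely what permits us to perturb them by $N\mathcal{O}_F$-multiples into a unimodular pair while preserving the congruence $\beta'\equiv 1\Mod{N\mathcal{O}_K}$.
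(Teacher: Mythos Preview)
Your proposal is correct and follows essentially the same approach as the paper's own proof: write a representative of the ray class as a free rank-two $\mathcal{O}_F$-module, locate an element congruent to $1$ modulo $N\mathcal{O}_K$, check its coordinates are coprime to $N$, lift via strong approximation (Lemma~\ref{strong}) to an $\mathrm{SL}_2(\mathcal{O}_F)$ change of basis, and then apply Lemma~\ref{wz}. The only cosmetic difference is that the paper chooses an integral $\mathfrak{a}$ in $C^{-1}$ and works with $\mathfrak{a}^{-1}\supseteq\mathcal{O}_K$, so the element congruent to $1$ is simply $1$ itself; you instead take an integral $\mathfrak{c}\in C$ and invoke $\mathfrak{c}+N\mathcal{O}_K=\mathcal{O}_K$ to find $\lambda$ (you should state explicitly that you pick $\mathfrak{c}$ integral, which is standard).
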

\begin{proof}
Let $C\in\mathcal{C}(N\mathcal{O}_K)$. One can always take
an integral ideal $\mathfrak{a}$ in the class $C^{-1}$ (\cite[Lemma 2.3 in
Chapter IV]{Janusz}), and also take $\xi_1,\,\xi_2\in K^\times$ such that
\begin{equation}\label{axx}
\mathfrak{a}^{-1}=[\xi_1,\,\xi_2]_F\quad\textrm{and}\quad
\xi=\frac{\xi_1}{\xi_2}\in\mathbb{H}
\end{equation}
because $\mathcal{O}_K$ is a free $\mathcal{O}_F$-module of rank $2$ and $\mathcal{O}_F$ is a PID by Assumption \ref{classnumberone}
(\cite[Theorem 6.1 in Chapter IV]{Hungerford}). Since $1\in\mathfrak{a}^{-1}$, one can express $1$ as
\begin{equation}\label{1}
1=u\xi_1+v\xi_2\quad\textrm{for some}~u,\,v\in\mathcal{O}_F.
\end{equation}
Now we claim that
\begin{equation}\label{gcd}
\gcd(N,\,u,\,v)=1.
\end{equation}
Indeed, if
there is an irreducible $d$ in $\mathcal{O}_F$ dividing $\gcd(N,\,u,\,v)$, then
the fractional ideal $d\mathfrak{a}^{-1}=[d\xi_1,\,d\xi_2]_F$ of $K$ contains $1$ by (\ref{1}).
So we derive $d\mathfrak{a}^{-1}\supseteq\mathcal{O}_K$, from which $d\mathcal{O}_K\supseteq\mathfrak{a}$
and $d\mathcal{O}_K\,|\,\mathfrak{a}$.
But this contradicts the fact that $\mathfrak{a}$ is relatively prime to $N\mathcal{O}_K$.
\par
Let $e=\gcd(u,\,v)$ up to associate in $\mathcal{O}_F$. Since $\mathcal{O}_F$ is a PID, we may express $e$ as
\begin{equation}\label{euuvv}
e=uu'+vv'\quad\textrm{for some}~u',\,v'\in\mathcal{O}_F
\end{equation}
(\cite[$\S$V.2]{Aluffi}). Furthermore, since $\gcd(N,\,e)=1$ by (\ref{gcd}),
we write $1$ as
\begin{equation}\label{1nnee}
1=NN'+ee'\quad\textrm{for some}~N',\,e'\in\mathcal{O}_F.
\end{equation}
We then obtain by (\ref{euuvv}) and (\ref{1nnee}) that
\begin{equation*}
1\equiv ee'
\equiv uu'e'+vv'e'
\Mod{N\mathcal{O}_F},
\end{equation*}
and so
\begin{equation*}
\begin{bmatrix}v'e' & -u'e'\\
u & v\end{bmatrix}\in\mathrm{SL}_2(\mathcal{O}_F/N\mathcal{O}_F).
\end{equation*}
Hence there exists a matrix
\begin{equation*}
\gamma=\begin{bmatrix}\mathrm{*}&\mathrm{*}\\
\widetilde{u}&\widetilde{v}\end{bmatrix}\in\mathrm{SL}_2(\mathcal{O}_F)
\quad\textrm{such that}~\widetilde{u}\equiv u,\,\widetilde{v}\equiv v\Mod{N\mathcal{O}_F}
\end{equation*}
by Lemma \ref{strong}.
\par
Let $z=\gamma(\xi)$ ($\in K\cap\mathbb{H}$). Then we deduce that
\begin{equation}\label{w1gx1}
[z,\,1]_F=[\gamma(\xi),\,1]_F
=\frac{1}{\widetilde{u}\xi+\widetilde{v}}[\xi,\,1]_F=
\frac{1}{\widetilde{u}\xi_1+\widetilde{v}\xi_2}[\xi_1,\,\xi_2]_F=
\frac{1}{\widetilde{u}\xi_1+\widetilde{v}\xi_2}\mathfrak{a}^{-1}.
\end{equation}
Here we observe by (\ref{axx}) and (\ref{1}) that
\begin{equation*}
\widetilde{u}\xi_1+\widetilde{v}\xi_2-1=
\widetilde{u}\xi_1+\widetilde{v}\xi_2-(u\xi_1+v\xi_2)=
(\widetilde{u}-u)\xi_1+(\widetilde{v}-v)\xi_2\in N\mathfrak{a}^{-1},
\end{equation*}
which show that
\begin{equation*}
\widetilde{u}\xi_1+\widetilde{v}\xi_2\equiv^*1\Mod{N\mathcal{O}_K}
\end{equation*}
since $\mathfrak{a}$ is relatively prime to $N\mathcal{O}_K$. Thus we obtain by (\ref{w1gx1}) that
\begin{equation*}
[[z,\,1]_F]=[\mathfrak{a}^{-1}]=C\quad\textrm{in}~\mathcal{C}(N\mathcal{O}_K).
\end{equation*}
Moreover, there is a form $Q$ in $\mathcal{Q}_F(N,\,d_K)$ with
$\omega_Q=z$ by Lemma \ref{wz}. Therefore we achieve
\begin{equation*}
\phi_{K,\,N}([Q])=C,
\end{equation*}
which proves that $\phi_{K,\,N}$ is surjective.
\end{proof}

\begin{theorem}\label{main1}
We may consider $\mathcal{C}_F(N,\,d_K)$ as a group isomorphic to
$\mathcal{C}(N\mathcal{O}_K)$ via the bijection
$[Q]\mapsto[[\omega_Q,\,1]_F]$.
\end{theorem}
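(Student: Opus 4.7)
The plan is immediate: the theorem is essentially a repackaging of what has already been proved. Proposition \ref{qqww} establishes that $\phi_{K,\,N} : \mathcal{C}_F(N,\,d_K) \to \mathcal{C}(N\mathcal{O}_K)$, $[Q] \mapsto [[\omega_Q,\,1]_F]$, is well-defined on $\sim_{\Gamma_{F,\,1}(N)}$-classes and is injective, while Proposition \ref{surjective} supplies the surjectivity. Hence $\phi_{K,\,N}$ is a bijection of sets between $\mathcal{C}_F(N,\,d_K)$ and the abelian group $\mathcal{C}(N\mathcal{O}_K)$.

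The second and final step is simply to transport the group structure along this bijection. For $[Q_1],\,[Q_2] \in \mathcal{C}_F(N,\,d_K)$, declare
\[
[Q_1] \ast [Q_2] := \phi_{K,\,N}^{-1}\bigl(\phi_{K,\,N}([Q_1]) \cdot \phi_{K,\,N}([Q_2])\bigr),
\]
where the dot on the right is multiplication in $\mathcal{C}(N\mathcal{O}_K)$. With this operation $\mathcal{C}_F(N,\,d_K)$ becomes an abelian group tautologically, and $\phi_{K,\,N}$ becomes, also tautologically, a group isomorphism onto $\mathcal{C}(N\mathcal{O}_K)$. There is no genuine obstacle at this stage; all the real work — showing that the $\mathcal{O}_F$-lattices $[\omega_Q,\,1]_F$ are in fact fractional $\mathcal{O}_K$-ideals coprime to $N\mathcal{O}_K$ (Lemma \ref{fractional}), the compatibility between the $\Gamma_{F,\,1}(N)$-action on forms and multiplication by elements congruent to $1$ modulo $N\mathcal{O}_K$ (Proposition \ref{qqww}), and the strong-approximation-based surjectivity argument (Proposition \ref{surjective}) — has already been completed in the preceding sections, so the theorem follows by a one-line invocation of these results.
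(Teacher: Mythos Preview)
Your proof is correct and follows exactly the same approach as the paper: invoke Proposition~\ref{qqww} for well-definedness and injectivity of $\phi_{K,\,N}$, Proposition~\ref{surjective} for surjectivity, and then transport the group structure of $\mathcal{C}(N\mathcal{O}_K)$ along the resulting bijection. The paper's own proof is the same two-sentence argument, just without writing out the transported operation explicitly.
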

\begin{proof}
By Propositions \ref{qqww} and \ref{surjective}, the
map $\phi_{K,\,N}:\mathcal{C}_F(N,\,d_K)\rightarrow\mathcal{C}(N\mathcal{O}_K)$ is bijective.
One can regard $\mathcal{C}_F(N,\,d_K)$ as a group isomorphic to $\mathcal{C}(N\mathcal{O}_K)$
by endowing it with a group structure so that
$\phi_{K,\,N}$ becomes an isomorphism.
\end{proof}

\section {Totally positivity}

Under the assumption that $F$ has narrow class number one, we shall
show that each form class $[Q]$
in $\mathcal{C}_F(N,\,d_K)$ has a ``totally positive" representative,
which is essential to introduce a CM-point associated with $[Q]$.

\begin{lemma}\label{qqcwgw}
If $Q,\,Q'\in\mathcal{Q}_F(N,\,d_K)$, then
\begin{equation*}
[Q]=[Q']~\textrm{in}~\mathcal{C}_F(N,\,d_K)\quad
\Longleftrightarrow\quad-\overline{\omega_Q}=\gamma(-\overline{\omega_{Q'}})~
\textrm{for some}~\gamma\in\Gamma_{F,\,1}(N).
\end{equation*}
\end{lemma}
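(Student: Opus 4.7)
The plan is to reduce the claimed equivalence to the fact that M\"obius transformations by elements of $\Gamma_{F,1}(N)$ commute with complex conjugation (since the entries lie in the totally real field $F$), combined with the elementary involution $z\mapsto -z$.

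First, I would reformulate the left-hand side so that $\omega_Q$ is moved rather than $Q$. Combining Remark \ref{actionzero} with Lemma \ref{samezero}, the condition $[Q]=[Q']$ in $\mathcal{C}_F(N,d_K)$ is equivalent to the existence of some $\gamma\in\Gamma_{F,1}(N)$ with $\omega_Q=\gamma(\omega_{Q'})$: one direction is Remark \ref{actionzero} applied to $Q^\gamma=Q'$, and the other uses injectivity of M\"obius maps on $\mathbb{H}$ together with Lemma \ref{samezero}. So the task reduces to showing that this condition is equivalent to the existence of $\gamma'\in\Gamma_{F,1}(N)$ with $-\overline{\omega_Q}=\gamma'(-\overline{\omega_{Q'}})$.

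Next, for each $\gamma=\begin{bmatrix}c_1&c_2\\c_3&c_4\end{bmatrix}\in\Gamma_{F,1}(N)$ I would introduce the companion matrix $\gamma^\sharp=\begin{bmatrix}c_1&-c_2\\-c_3&c_4\end{bmatrix}$, essentially the conjugate of $\gamma$ by $\mathrm{diag}(-1,1)$. Direct inspection gives $\det(\gamma^\sharp)=\det(\gamma)>0$, and the lower row $(-c_3,c_4)$ satisfies exactly the same defining congruences modulo $N\mathcal{O}_F$ as $(c_3,c_4)$ (with the same unit $\zeta\in\mathcal{O}_F^\times$), so $\gamma^\sharp\in\Gamma_{F,1}(N)$. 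A one-line calculation yields the key identity $-\gamma(-z)=\gamma^\sharp(z)$ for every $z\in\mathbb{H}$. Moreover, since every $c_i$ is a totally real algebraic integer, complex conjugation commutes with the M\"obius action: $\overline{\gamma(z)}=\gamma(\overline{z})$.

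With these ingredients in place, the forward direction is immediate from
\begin{equation*}
-\overline{\omega_Q}=-\overline{\gamma(\omega_{Q'})}=-\gamma(\overline{\omega_{Q'}})=\gamma^\sharp(-\overline{\omega_{Q'}}),
\end{equation*}
and the converse follows symmetrically: starting from $-\overline{\omega_Q}=\gamma(-\overline{\omega_{Q'}})$ and applying the same identity once more yields $\omega_Q=\gamma^\sharp(\omega_{Q'})$, from which the reformulation in the first step gives $[Q]=[Q']$. The only subtle point, and hence the closest thing to an ``obstacle'', is verifying the stability of $\Gamma_{F,1}(N)$ under $\gamma\mapsto\gamma^\sharp$; but this is transparent once one notes that both the positivity of $\det(\gamma)$ and the ray-level congruences are symmetric under the simultaneous sign flips $c_2\mapsto -c_2$, $c_3\mapsto -c_3$.
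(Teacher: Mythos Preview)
Your proposal is correct and essentially identical to the paper's proof: both hinge on the observation that for $\gamma=\begin{bmatrix}c_1&c_2\\c_3&c_4\end{bmatrix}\in\Gamma_{F,1}(N)$ the sign-flipped matrix $\begin{bmatrix}c_1&-c_2\\-c_3&c_4\end{bmatrix}$ again lies in $\Gamma_{F,1}(N)$ and intertwines $z\mapsto -\overline{z}$ with the M\"obius action, together with Remark~\ref{actionzero} and Lemma~\ref{samezero}. The only cosmetic difference is that you factor out the reformulation ``$[Q]=[Q']\Leftrightarrow\omega_Q=\gamma(\omega_{Q'})$'' as a preliminary step, whereas the paper weaves it into the converse direction.
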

\begin{proof}
Assume that $[Q]=[Q']$ in $\mathcal{C}_F(N,\,d_K)$, namely,
\begin{equation*}
Q'=Q^\alpha\quad\textrm{for some}~\alpha=\begin{bmatrix}a_1&a_2\\
a_3&a_4\end{bmatrix}\in\Gamma_{F,\,1}(N).
\end{equation*}
Then we see by Remark \ref{actionzero} that
\begin{equation*}
\omega_Q=\alpha(\omega_{Q'})=\frac{a_1\omega_{Q'}+a_2}{a_3\omega_{Q'}+a_4},
\end{equation*}
and so
\begin{equation*}
-\overline{\omega_Q}=\frac{a_1(-\overline{\omega_{Q'}})-a_2}{-a_3(-\overline{\omega_{Q'}})+a_4}
=\gamma(-\overline{\omega_{Q'}})\quad\textrm{with}~
\gamma=\begin{bmatrix}\phantom{-}a_1&-a_2\\-a_3&\phantom{-}a_4\end{bmatrix}\in\Gamma_{F,\,1}(N).
\end{equation*}
\par
Conversely, assume that
\begin{equation*}
-\overline{\omega_Q}=\gamma(-\overline{\omega_{Q'}})\quad
\textrm{for some}~\gamma=\begin{bmatrix}c_1&c_2\\c_3&c_4\end{bmatrix}\in\Gamma_{F,\,1}(N).
\end{equation*}
Letting $\alpha=\begin{bmatrix}\phantom{-}c_1&-c_2\\
-c_3&\phantom{-}c_4\end{bmatrix}$ ($\in\Gamma_{F,\,1}(N)$) we derive that
\begin{eqnarray*}
\alpha(\omega_{Q^\alpha})&=&
\omega_Q\quad\textrm{by Remark \ref{actionzero}}\\
&=&\alpha(\omega_{Q'})\quad\textrm{as above},
\end{eqnarray*}
and so $\omega_{Q^\alpha}=\omega_{Q'}$.
Thus we attain by Lemma \ref{samezero} that $Q'=Q^\alpha$, and hence
$[Q]=[Q']$ in $\mathcal{C}_F(N,\,d_K)$.
\end{proof}

For $\nu\in F$, we write $\nu\gg0$ when it is totally positive.
Let
\begin{eqnarray*}
\mathcal{Q}_F^+(N,\,d_K)&=&\{ax^2+bxy+cy^2\in\mathcal{Q}_F(N,\,d_K)~|~a\gg0\}\quad\textrm{and}\\
\Gamma_{F,\,1}^+(N)&=&\{\gamma\in\Gamma_{F,\,1}(N)~|~\det(\gamma)\gg0\}.
\end{eqnarray*}
Let $[K:\mathbb{Q}]=2g$ and $(K,\,\{\varphi_i\}_{i=1}^g)$
be a CM-type. That is, $\{\varphi_1,\,\varphi_2,
\ldots,\,\varphi_g,\,
\varphi_1\rho,\,\varphi_2\rho,
\ldots,\,\varphi_g\rho\}$ is  exactly the set of all embeddings of $K$ into $\mathbb{C}$,
where $\rho$ stands for the complex conjugation.
Throughout this section, we fix a CM-type $(K,\,\{\varphi_i\}_{i=1}^g)$ in such a way that
\begin{equation}\label{rirh}
\varphi_1=\mathrm{id}_K\quad\textrm{and}\quad \varphi_i(\omega_K)\in\mathbb{H}\quad (i=1,\,2,\,\ldots,\,g).
\end{equation}
Note that
\begin{equation}\label{rbar}
\varphi_i(\overline{z})=\overline{\varphi_i(z)}\quad\textrm{for all}~z\in K~\textrm{and}~
i=1,\,2,\,\ldots,\,g
\end{equation}
(\cite[Lemma 18.2 (i)]{Shimura}).

\begin{lemma}\label{+QQC}
If $Q,\,Q'\in\mathcal{Q}_F^+(N,\,d_K)$, then
\begin{equation*}
[Q]=[Q']~\textrm{in}~\mathcal{C}_N(N,\,d_K)\quad
\Longleftrightarrow\quad
-\overline{\omega_Q}=\gamma(-\overline{\omega_{Q'}})~\textrm{for some}~\gamma\in
\Gamma_{F,\,1}^+(N).
\end{equation*}
\end{lemma}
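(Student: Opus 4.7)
The implication $(\Longleftarrow)$ is immediate from Lemma \ref{qqcwgw}, since $\Gamma_{F,\,1}^+(N)\subseteq\Gamma_{F,\,1}(N)$.

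For $(\Longrightarrow)$ the strategy is to trace through the matrix built in Lemma \ref{qqcwgw} and upgrade ``$\det(\gamma)>0$'' to ``$\det(\gamma)\gg0$'' using the total positivity of $a$ and $a'$. Concretely, suppose $[Q]=[Q']$ in $\mathcal{C}_F(N,\,d_K)$, so $Q'=Q^\alpha$ for some $\alpha=\begin{bmatrix}a_1&a_2\\a_3&a_4\end{bmatrix}\in\Gamma_{F,\,1}(N)$. The proof of Lemma \ref{qqcwgw} shows that $\gamma=\begin{bmatrix}a_1&-a_2\\-a_3&a_4\end{bmatrix}$ lies in $\Gamma_{F,\,1}(N)$ and satisfies $-\overline{\omega_Q}=\gamma(-\overline{\omega_{Q'}})$. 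Since $\det(\gamma)=\det(\alpha)$, it suffices to prove $\det(\alpha)\gg0$ whenever $Q,\,Q'\in\mathcal{Q}_F^+(N,\,d_K)$.

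The key step is the identity (extracted from the proof of Lemma \ref{action})
\[
a'\det(\alpha)=Q(a_1,\,a_3)=aa_1^2+ba_1a_3+ca_3^2,
\]
combined with the following elementary positivity observation: because $K$ is totally imaginary over the totally real $F$, one has $d_K\ll0$, and since $a\gg0$ each conjugate $\varphi_i(Q)(X,\,Y)\in\mathbb{R}[X,\,Y]$ is a positive definite real binary quadratic form. As $\alpha\in\mathrm{GL}_2(\mathcal{O}_F)$ is invertible, its first column $(a_1,\,a_3)$ is nonzero in $\mathcal{O}_F^2$; because each $\varphi_i$ is a field embedding, $(\varphi_i(a_1),\,\varphi_i(a_3))\neq(0,\,0)$ as well. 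Consequently $\varphi_i(Q(a_1,\,a_3))>0$ for every $i$, i.e.\ $Q(a_1,\,a_3)\gg0$. Together with $a'\gg0$ and $\det(\alpha)\in\mathcal{O}_F^\times$, the displayed identity then forces $\det(\alpha)\gg0$, so $\gamma\in\Gamma_{F,\,1}^+(N)$, as required.

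The main (and essentially only) technical point is checking that positive definiteness of $Q$ propagates to each real embedding of $F$, which reduces immediately to the total negativity of $d_K$; no appeal to a narrow class number hypothesis is needed here.
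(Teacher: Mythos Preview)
Your argument is correct. Both directions are sound, and your positivity check in the forward direction is clean: the identity $a'\det(\alpha)=Q(a_1,a_3)$ together with $d_K\ll0$, $a\gg0$, and $(a_1,a_3)\neq(0,0)$ indeed forces $Q(a_1,a_3)\gg0$, hence $\det(\alpha)\gg0$.

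The paper reaches the same conclusion by a different but closely related computation. Instead of working with the algebraic identity for $a'$, it argues geometrically: using $a\gg0$ and $\varphi_i(\omega_K)\in\mathbb{H}$ it writes
\[
\varphi_i(-\overline{\omega_Q})=\frac{\varphi_i(\omega_K)}{\varphi_i(a)}+\varphi_i\!\left(\frac{b_K+b}{2a}\right)\in\mathbb{H}
\]
for each $i$, and similarly for $Q'$; then, since the M\"obius transformation $\gamma^{(i)}$ carries a point of $\mathbb{H}$ to a point of $\mathbb{H}$, one has $\det(\gamma^{(i)})>0$ for all $i$, i.e.\ $\det(\gamma)\gg0$. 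Your route avoids invoking the CM-type embeddings of $\omega_K$ and stays entirely within the form $Q$; the paper's route instead foreshadows the CM-point computations used later in \S\ref{defininginvariant}. Both exploit the same ingredients ($a\gg0$ and $d_K\ll0$), and neither needs Assumption~\ref{narrowclassnumberone}.
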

\begin{proof}
The ``$\Longleftarrow$\," part is clear due to Lemma \ref{qqcwgw}.
\par
Conversely, assume that $[Q]=[Q']$ in $\mathcal{C}_F(N,\,d_K)$. Then we have
\begin{equation*}
-\overline{\omega_Q}=\gamma(-\overline{\omega_{Q'}})\quad
\textrm{for some}~\gamma=\begin{bmatrix}c_1&c_2\\c_3&c_4\end{bmatrix}\in\Gamma_{F,\,1}(N)
\end{equation*}
by Lemma \ref{qqcwgw}. Let $Q=ax^2+bxy+cy^2$. Since $\varphi_i(a)>0$, $\varphi_i(w_K)\in\mathbb{H}$ ($i=1,\,2,\,\ldots,\,g$)
and $F$ is totally real, we derive that
\begin{equation}\label{rgw}
\varphi_i(\gamma(-\overline{\omega_{Q'}}))=\varphi_i(-\overline{\omega_Q})=\varphi_i\left(
\frac{b+\sqrt{d_K}}{2a}\right)=
\frac{\varphi_i(\omega_K)}{\varphi_i(a)}+
\varphi_i\left(\frac{b_K+b}{2a}\right)\in\mathbb{H}.
\end{equation}
It then follows that
\begin{equation}\label{girw}
\gamma^{(i)}(\varphi_i(-\overline{\omega_{Q'}}))=
\varphi_i(\gamma(-\overline{\omega_{Q'}}))\in\mathbb{H}\quad
\textrm{where}~\gamma^{(i)}=\begin{bmatrix}
\varphi_i(c_1)&\varphi_i(c_2)\\
\varphi_i(c_3)&\varphi_i(c_4)\end{bmatrix}.
\end{equation}
In a similar way to (\ref{rgw}), one can also show that
$\varphi_i(-\overline{\omega_{Q'}})\in\mathbb{H}$. Thus we claim
by (\ref{girw}) that
\begin{equation*}
\det(\gamma^{(i)})>0\quad(i=1,\,2,\,\ldots,\,g)
\end{equation*}
(\cite[Lemma 1.1]{Silverman}), and hence $\gamma\in\Gamma_{F,\,1}^+(N)$.
\end{proof}

\begin{assumption}\label{narrowclassnumberone}
From now one, we assume that the narrow class number $h_F^+$ of $F$ is one.
\end{assumption}

\begin{lemma}\label{+representative}
We have
\begin{equation*}
\mathcal{C}_F(N,\,d_K)=\{[Q]~|~Q\in\mathcal{Q}_F^+(N,\,d_K)\}.
\end{equation*}
\end{lemma}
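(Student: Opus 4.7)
My plan is to take an arbitrary $Q=ax^2+bxy+cy^2\in\mathcal{Q}_F(N,\,d_K)$ and produce an equivalent form in $\mathcal{Q}_F^+(N,\,d_K)$ by acting on $Q$ with an appropriate element of $\Gamma_{F,\,1}(N)$. The simplest candidate is a diagonal matrix $\gamma=\begin{bmatrix} u & 0 \\ 0 & 1 \end{bmatrix}$ with $u\in\mathcal{O}_F^\times$, since a short calculation yields
\begin{equation*}
Q^\gamma(x,\,y)=\frac{1}{u}Q(ux,\,y)=au\,x^2+b\,xy+\frac{c}{u}\,y^2,
\end{equation*}
whose leading coefficient is $au$. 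Every coefficient lies in $\mathcal{O}_F$ because $u$ is a unit, and the conditions for $\gamma\in\Gamma_{F,\,1}(N)$ collapse to a single requirement $\det(\gamma)=u>0$ under the identity embedding $\varphi_1$, since the lower-row congruences $c_3\equiv 0$ and $c_4=1\equiv 1\Mod{N\mathcal{O}_F}$ are automatic. So it suffices to produce $u\in\mathcal{O}_F^\times$ with $au\gg 0$ and $\varphi_1(u)>0$.

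This is precisely where Assumption \ref{narrowclassnumberone} enters. The narrow class number of $F$ being one means every nonzero ideal of $\mathcal{O}_F$ admits a totally positive generator; applying this to the principal ideal $(a)$ supplies $a'\gg 0$ with $(a')=(a)$, and I then set $u=a'/a\in\mathcal{O}_F^\times$, so that $au=a'\gg 0$. Since $a>0$ and $a'>0$ under $\varphi_1$, I also obtain $\varphi_1(u)>0$, whence $\gamma\in\Gamma_{F,\,1}(N)$. By Lemma \ref{action}, $Q^\gamma\in\mathcal{Q}_F(N,\,d_K)$, and by construction its leading coefficient $au$ is totally positive; hence $Q^\gamma\in\mathcal{Q}_F^+(N,\,d_K)$ and $[Q]=[Q^\gamma]$ in $\mathcal{C}_F(N,\,d_K)$, which establishes the claimed equality.

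The argument is entirely direct, and I do not foresee any serious obstacle. The only substantive step is the translation of the narrow class number one hypothesis into the existence of a sign-prescribing unit $u$, which is a standard manipulation; everything else is a matter of checking definitions and performing the diagonal substitution.
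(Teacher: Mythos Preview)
Your proof is correct and follows essentially the same approach as the paper: both take the diagonal matrix $\gamma=\begin{bmatrix}u&0\\0&1\end{bmatrix}$ with $u=a'/a\in\mathcal{O}_F^\times$ obtained from a totally positive generator $a'$ of $a\mathcal{O}_F$ (via the narrow class number one hypothesis), and observe that $Q^\gamma$ has totally positive leading coefficient $au=a'$. Your exposition is slightly more explicit about verifying $\varphi_1(u)>0$, but the argument is the same.
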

\begin{proof}
Let $Q=ax^2+bxy+cy^2\in\mathcal{Q}_F(N,\,d_K)$. Since we are assuming $h_F^+=1$, we get
\begin{equation*}
a\mathcal{O}_F=a'\mathcal{O}_F\quad\textrm{for some}~a'\in F~\textrm{with}~a'\gg0,
\end{equation*}
and so
\begin{equation*}
a'=\zeta a\quad\textrm{for some}~\zeta\in\mathcal{O}_F^\times~\textrm{such that}~\zeta>0.
\end{equation*}
If we let
\begin{equation*}
Q'=Q^\gamma
\quad\textrm{with}~\gamma=\begin{bmatrix}\zeta&0\\0&1\end{bmatrix}\in\Gamma_{F,\,1}(N),
\end{equation*}
then we conclude that
\begin{equation*}
[Q]=[Q']\quad\textrm{and}\quad
Q'=a'x^2+bxy+\zeta^{-1}cy^2\in\mathcal{Q}_F^+(N,\,d_K).
\end{equation*}
This completes the proof.
\end{proof}

\section {Ray class invariants}\label{defininginvariant}

Throughout this section, we let $f=f(z_1,\,z_2,\,\ldots,\,z_g)$ be a Hilbert modular function for
$\Gamma_{F,\,1}^+(N)$ with rational Fourier coefficients, and let
$C\in\mathcal{C}(N\mathcal{O}_K)$.
We shall define a ray class invariant $f(C)$ of $C$ as
a singular value of $f$.
\par
First, take an integral ideal $\mathfrak{c}$ in the ray class $C$ (\cite[Lemma 2.3 in Chapter IV]{Janusz}).

\begin{lemma}\label{x1x2}
There are elements $\xi_1$ and $\xi_2$ of $K$ such that
$\mathfrak{c}^{-1}=[\xi_1,\,\xi_2]_F$ and
\begin{equation}\label{rhx}
\varphi_i(\xi)\in\mathbb{H}\quad\textrm{with}~\xi=\frac{\xi_1}{\xi_2}
\quad(i=1,\,2,\,\ldots,\,g).
\end{equation}
\end{lemma}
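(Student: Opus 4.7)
First, Assumption \ref{classnumberone} makes $\mathcal{O}_F$ a PID, so the fractional $\mathcal{O}_K$-ideal $\mathfrak{c}^{-1}$, being a torsion-free $\mathcal{O}_F$-module of rank $2$, admits a basis $\mathfrak{c}^{-1} = [\eta_1,\,\eta_2]_F$ for some $\eta_1,\,\eta_2 \in K$ (just as in (\ref{axx})). Set $\xi = \eta_1/\eta_2$. The $F$-linear independence of $\eta_1$ and $\eta_2$ gives $\xi \in K \setminus F$; since $K$ is totally imaginary over the totally real field $F$ and each $\varphi_i$ restricts to a real embedding of $F$, each $\varphi_i(\xi)$ is non-real, so $\mathrm{Im}(\varphi_i(\xi)) \neq 0$ for every $i = 1,\,2,\,\ldots,\,g$.

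Next, set $\epsilon_i = \mathrm{sgn}(\mathrm{Im}(\varphi_i(\xi))) \in \{\pm 1\}$. The idea is to adjust the basis by a unit: choose $u \in \mathcal{O}_F^\times$ satisfying $\mathrm{sgn}(\varphi_i(u)) = \epsilon_i$ for every $i$, and define $\xi_1 = u\eta_1$ and $\xi_2 = \eta_2$. Since $u$ is a unit,
\begin{equation*}
[\xi_1,\,\xi_2]_F = [u\eta_1,\,\eta_2]_F = [\eta_1,\,\eta_2]_F = \mathfrak{c}^{-1},
\end{equation*}
while $\varphi_i(\xi_1/\xi_2) = \varphi_i(u)\varphi_i(\xi)$ has strictly positive imaginary part for every $i$, giving precisely (\ref{rhx}).

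The one nontrivial step is producing such a unit $u$, and this is where Assumption \ref{narrowclassnumberone} enters. The standard exact sequence
\begin{equation*}
\mathcal{O}_F^\times \longrightarrow \{\pm 1\}^g \longrightarrow \mathrm{Cl}^+(F) \longrightarrow \mathrm{Cl}(F) \longrightarrow 1,
\end{equation*}
in which the first arrow records signs at the $g$ real embeddings of $F$, shows under $h_F = h_F^+ = 1$ that the sign map $\mathcal{O}_F^\times \to \{\pm 1\}^g$ is surjective; hence a unit realizing any prescribed sign pattern exists. The main obstacle is therefore not the algebraic manipulation of the basis but rather this invocation of sign-surjectivity, which is exactly the consequence of the narrow-class-number-one hypothesis.
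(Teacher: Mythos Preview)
Your argument is correct, and it is a genuinely different (and more direct) route than the paper's. The paper proves Lemma~\ref{x1x2} by invoking the form--class--group machinery already built up: it applies Theorem~\ref{main1} and Lemma~\ref{+representative} to write $C^{-1}=\phi_{K,\,N}([Q])$ for some $Q=ax^2+bxy+cy^2\in\mathcal{Q}_F^+(N,\,d_K)$ with $a\gg0$, and then takes $\xi=\omega_Q$; the total positivity of $a$ together with $\varphi_i(\omega_K)\in\mathbb{H}$ forces $\varphi_i(\omega_Q)\in\mathbb{H}$. You instead pick an arbitrary $\mathcal{O}_F$-basis of $\mathfrak{c}^{-1}$ and correct the signs of $\mathrm{Im}(\varphi_i(\xi))$ in one stroke by multiplying by a unit $u$ realising the required sign pattern, appealing directly to the surjectivity of $\mathcal{O}_F^\times\to\{\pm1\}^g$ under Assumption~\ref{narrowclassnumberone}. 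Both proofs ultimately rest on exactly this sign-surjectivity (the paper uses it inside Lemma~\ref{+representative}); your version is self-contained and avoids the detour through quadratic forms, while the paper's version has the minor expository benefit of exhibiting $\xi$ as $\omega_Q$ for a totally positive form, which is the shape used again in Lemma~\ref{f(w)}.
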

\begin{proof}
By Theorem \ref{main1} and Lemma \ref{+representative}, we have
\begin{equation*}
C^{-1}=[[\omega_Q,\,1]_F]\quad
\textrm{for some}~Q=ax^2+bxy+cy^2\in\mathcal{Q}_F^+(N,\,d_K).
\end{equation*}
So, we get
\begin{equation*}
\mathfrak{c}^{-1}=\nu[\omega_Q,\,1]_F
\quad\textrm{for some}~\nu\in K^\times~\textrm{such that}~
\nu\equiv^*1\Mod{N\mathcal{O}_K}.
\end{equation*}
If we take
\begin{equation*}
\xi_1=\nu\omega_Q\quad\textrm{and}\quad \xi_2=\nu,
\end{equation*}
then we see that $\mathfrak{c}^{-1}=[\xi_1,\,\xi_2]_F$ and
\begin{equation}\label{rawbh}
\varphi_i\left(\frac{\xi_1}{\xi_2}\right)=\varphi_i(\omega_Q)=
\varphi_i\left(
\frac{1}{a}\left(\omega_K+\frac{b_K-b}{2}\right)
\right)\in\mathbb{H}\quad(i=1,\,2,\,\ldots,\,g)
\end{equation}
because $a\gg0$ and $\varphi_i(\omega_K)\in\mathbb{H}$.
\end{proof}

Now, let $\xi_1,\,\xi_2,\,\xi$ be as in Lemma \ref{x1x2}.
Since
\begin{equation*}
\mathcal{O}_K=[\omega_K,\,1]_F~\subseteq~\mathfrak{c}^{-1}=[\xi_1,\,\xi_2]_F,
\end{equation*}
we deduce
\begin{equation}\label{wAx}
\begin{bmatrix}
\omega_K\\1
\end{bmatrix}=A\begin{bmatrix}
\xi_1\\\xi_2
\end{bmatrix}\quad\textrm{for some}~A\in M_2(\mathcal{O}_F).
\end{equation}

\begin{lemma}\label{det>>0}
The matrix $A$ satisfies the following properties.
\begin{enumerate}
\item[\textup{(i)}] $\det(A)\gg0$.
\item[\textup{(ii)}] $\gcd(\det(A),\,N)=1$.
\end{enumerate}
\end{lemma}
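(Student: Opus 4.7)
The plan is to attack both parts by extending the defining equation
\[
\begin{bmatrix}\omega_K\\1\end{bmatrix}=A\begin{bmatrix}\xi_1\\\xi_2\end{bmatrix}
\]
to a $2\times 2$ matrix identity that includes complex conjugates. Since $A\in M_2(\mathcal{O}_F)$ and $F$ is totally real, the entries of $A$ are fixed by complex conjugation, so stacking the displayed equation with its conjugate gives
\[
\begin{bmatrix}\omega_K & \overline{\omega_K}\\ 1 & 1\end{bmatrix}=A\begin{bmatrix}\xi_1 & \overline{\xi_1}\\ \xi_2 & \overline{\xi_2}\end{bmatrix}.
\]
Taking determinants of this identity produces the single scalar relation that drives both parts.

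For (i), I would apply an arbitrary $\varphi_i$ ($1\le i\le g$) to the matrix identity and then take determinants. Using (\ref{rbar}) the left side becomes $\varphi_i(\omega_K)-\overline{\varphi_i(\omega_K)}$, which is purely imaginary with strictly positive imaginary part by (\ref{rirh}); the right side becomes $\varphi_i(\det A)\cdot(\varphi_i(\xi_1)\overline{\varphi_i(\xi_2)}-\overline{\varphi_i(\xi_1)}\varphi_i(\xi_2))$, whose second factor is purely imaginary with strictly positive imaginary part because $\varphi_i(\xi)=\varphi_i(\xi_1)/\varphi_i(\xi_2)\in\mathbb{H}$ by (\ref{rhx}). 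As $\det A\in\mathcal{O}_F$ and $F$ is totally real, $\varphi_i(\det A)\in\mathbb{R}$, so comparing positive imaginary parts forces $\varphi_i(\det A)>0$ for every $i$. Since $\{\varphi_i|_F\}_{i=1}^g$ runs through all the real embeddings of $F$, this gives $\det A\gg 0$.

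For (ii), I would take determinants directly (without applying any $\varphi_i$) and square, obtaining
\[
d_K=(\omega_K-\overline{\omega_K})^2=\det(A)^2\,(\xi_1\overline{\xi_2}-\overline{\xi_1}\xi_2)^2.
\]
The second factor on the right is the relative discriminant $d_{K/F}(\xi_1,\xi_2)$ of the $\mathcal{O}_F$-basis $\{\xi_1,\xi_2\}$ of $\mathfrak{c}^{-1}$, so by the classical discriminant--norm formula for fractional ideals (\cite[Chapter III]{Janusz}) one has $d_{K/F}(\xi_1,\xi_2)\mathcal{O}_F=N_{K/F}(\mathfrak{c}^{-1})^2\,d_K\mathcal{O}_F$ as fractional ideals of $\mathcal{O}_F$. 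Cancelling $d_K\mathcal{O}_F$ yields $(\det A)^2\mathcal{O}_F=N_{K/F}(\mathfrak{c})^2$, and unique factorization of ideals in the Dedekind domain $\mathcal{O}_F$ lets me take square roots to conclude $(\det A)\mathcal{O}_F=N_{K/F}(\mathfrak{c})$. Because $\mathfrak{c}$ is integral and coprime to $N\mathcal{O}_K$, every prime of $\mathcal{O}_K$ dividing $\mathfrak{c}$ lies above a prime of $\mathcal{O}_F$ not dividing $N$, so $N_{K/F}(\mathfrak{c})$ is coprime to $N\mathcal{O}_F$; hence $\gcd(\det A,N)=1$.

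The main obstacle I foresee is the discriminant--norm identification in (ii), which requires invoking both the transition formula for the discriminant of a fractional ideal over $\mathcal{O}_F$ and the fact that squaring is injective on the ideal semigroup of a Dedekind domain; everything else reduces to careful bookkeeping with conjugates.
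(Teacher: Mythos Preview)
Your argument is correct and matches the paper's proof almost exactly; in particular part (ii) is identical, right down to the identity $(\det A)\mathcal{O}_F=N_{K/F}(\mathfrak{c})$ and the coprimality deduction. The only cosmetic difference is in (i): the paper divides the two rows of $A\begin{bmatrix}\xi_1\\\xi_2\end{bmatrix}$ to write $\omega_K=\dfrac{a_1\xi+a_2}{a_3\xi+a_4}$ and quotes the standard formula $\mathrm{Im}(\gamma z)=\det(\gamma)\,\mathrm{Im}(z)/|cz+d|^2$, whereas you take the determinant of the conjugate-augmented $2\times2$ identity and compare imaginary parts of $\varphi_i(\omega_K)-\overline{\varphi_i(\omega_K)}$ with $|\varphi_i(\xi_2)|^2\bigl(\varphi_i(\xi)-\overline{\varphi_i(\xi)}\bigr)$; these are the same computation in two guises, and your version has the minor advantage of using the same matrix identity for both parts.
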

\begin{proof}
\begin{enumerate}
\item[(i)] Let $A=\begin{bmatrix}a_1&a_2\\a_3&a_4\end{bmatrix}$.
We attain by (\ref{wAx}) that
\begin{equation}\label{II}
\mathrm{Im}(\varphi_i(\omega_K))=
\mathrm{Im}\left(\varphi_i\left(\frac{a_1\xi+a_2}{a_3\xi+a_4}\right)\right)
=\frac{\varphi_i(\det(A))\mathrm{Im}(\varphi_i(\xi))}
{|\varphi_i(a_3)\varphi_i(\xi)+\varphi_i(a_4)|^2}\quad(i=1,\,2,\,\ldots,\,g)
\end{equation}
(\cite[Lemma 1.1 in Chapter I]{Silverman}). Since $\varphi_i(\omega_K)$ and $\varphi_i(\xi)$
belong to $\mathbb{H}$ by (\ref{rirh}) and (\ref{rhx}),
we achieve from (\ref{II}) that $\varphi_i(\det(A))>0$, and hence $\det(A)\gg0$.
\item[(ii)] Again by (\ref{wAx}), we have
\begin{equation*}
\begin{bmatrix}
\omega_K & \overline{\omega_K}\\
1&1\end{bmatrix}
=A\begin{bmatrix}\xi_1&\overline{\xi_1}\\
\xi_2&\overline{\xi_2}\end{bmatrix}.
\end{equation*}
We then derive that
\begin{equation*}
d_K\mathcal{O}_F=\det(A)^2N_{K/F}(\mathfrak{c}^{-1})^2d_K\mathcal{O}_F
\end{equation*}
(\cite[$\S$I.7]{Janusz} and \cite[$\S$III.3]{Lang}), and so
\begin{equation*}
\det(A)\mathcal{O}_F=N_{K/F}(\mathfrak{c}).
\end{equation*}
Since $\mathfrak{c}$ is relatively prime to $N\mathcal{O}_K$, we should have
$\gcd(\det(A),\,N)=1$.
\end{enumerate}
\end{proof}

Letting $d=\det(A)$, we obtain $d\gg0$ and $\gcd(d,\,N)=1$
by Lemma \ref{det>>0}. Moreover, since the reduction $\mathrm{SL}_2(\mathcal{O}_F)\rightarrow
\mathrm{SL}_2(\mathcal{O}_F/N\mathcal{O}_F)$ is surjective by Lemma \ref{strong}, we get
\begin{equation}\label{AA_1}
A\equiv\begin{bmatrix}1&0\\0&d\end{bmatrix}A_1
\Mod{M_2(N\mathcal{O}_F)}\quad
\textrm{for some}~A_1\in\mathrm{SL}_2(\mathcal{O}_F).
\end{equation}

\begin{definition}\label{rayclassinvariant}
We define
\begin{equation*}
f(C)=(f\circ A_1)(\varphi_1(\xi),\,\varphi_2(\xi),\,\ldots,\,\varphi_g(\xi)).
\end{equation*}
\end{definition}

\begin{proposition}
The value $f(C)$ is a ray class invariant of $C$. That is, $f(C)$ depends only on $C$, not on the choice
of $\mathfrak{c},\,\xi_1,\,\xi_2,\,A_1$.
\end{proposition}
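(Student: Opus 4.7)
The plan is to verify well-definedness in three stages: first with respect to $A_1$ (keeping $\mathfrak{c}, \xi_1, \xi_2$ fixed), then with respect to $(\xi_1, \xi_2)$ (keeping $\mathfrak{c}$ fixed), and finally with respect to the integral representative $\mathfrak{c}$ of $C$. In each stage the strategy is to produce an explicit element of $\Gamma_{F,1}^+(N)$ that relates the two competing computations; the required equality of values is then immediate from the $\Gamma_{F,1}^+(N)$-invariance of $f$.

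Stage one is essentially automatic. If $A_1$ and $A_1'$ both satisfy (\ref{AA_1}), then $\mathrm{diag}(1,d)(A_1 - A_1') \equiv 0 \pmod{N\mathcal{O}_F}$, and since $\gcd(d,N) = 1$ by Lemma \ref{det>>0}(ii), $A_1 (A_1')^{-1}$ is unimodular and congruent to $I_2$ modulo $N$, hence lies in $\Gamma_{F,1}^+(N)$. For stage two, a second admissible basis relates to the first via $(\xi_1, \xi_2)^T = B(\xi_1', \xi_2')^T$ for some $B \in \mathrm{GL}_2(\mathcal{O}_F)$, and the requirement $\varphi_i(\xi_1'/\xi_2') \in \mathbb{H}$ forces $\det B$ to be a totally positive unit of $\mathcal{O}_F$. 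The new matrix is $A' = AB$ with $d' = d\cdot\det B$, and a direct calculation shows $\delta := A_1 B (A_1')^{-1} \in \mathrm{GL}_2(\mathcal{O}_F)$ satisfies $\det\delta = \det B \gg 0$ together with $\delta \equiv \mathrm{diag}(1,\det B) \pmod{N\mathcal{O}_F}$, so $\delta \in \Gamma_{F,1}^+(N)$. Combined with the M\"obius identity $B(\xi') = \xi$, the relation $A_1(\varphi_i(\xi)) = \delta(A_1'(\varphi_i(\xi')))$ at each embedding gives the desired equality.

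For stage three, write $\mathfrak{c}' = \lambda\mathfrak{c}$ with $\lambda \equiv^* 1 \pmod{N\mathcal{O}_K}$. Thanks to stage two, I am free to use the basis $(\lambda^{-1}\xi_1,\, \lambda^{-1}\xi_2)$ of $\mathfrak{c}'^{-1}$, for which $\xi$ is unchanged; the corresponding new matrix is $A^{\mathrm{new}} = A M_\lambda$, where $M_\lambda \in M_2(F)$ is the matrix of multiplication by $\lambda$ in the basis $(\xi_1, \xi_2)$, and $\det A^{\mathrm{new}} = d\cdot N_{K/F}(\lambda)$. The principal obstacle is to verify $M_\lambda \equiv I_2 \pmod{N\mathcal{O}_F}$: one argues locally at each prime $\mathfrak{p}$ of $F$ dividing $N$, exploiting that $\mathfrak{c}^{-1}$ localizes to $\mathcal{O}_{K,\mathfrak{p}}$ (because $\gcd(\mathfrak{c}, N) = 1$) so that $(\xi_1, \xi_2)$ is an $\mathcal{O}_{F,\mathfrak{p}}$-basis there, and then translating $\lambda \equiv^* 1 \pmod{N\mathcal{O}_K}$ into $\lambda - 1 \in N\mathcal{O}_{K,\mathfrak{p}}$, which forces multiplication by $\lambda - 1$ to send $\mathcal{O}_{K,\mathfrak{p}}$ into $N\mathcal{O}_{K,\mathfrak{p}}$. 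The same reasoning yields $N_{K/F}(\lambda) \equiv 1 \pmod{N\mathcal{O}_F}$. Hence $A^{\mathrm{new}} \equiv A$ and $\det A^{\mathrm{new}} \equiv d$ modulo $N\mathcal{O}_F$, and the stage-one argument now gives $A_1^{\mathrm{new}}(A_1)^{-1} \in \Gamma_{F,1}^+(N)$, completing the proof.
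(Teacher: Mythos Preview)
Your proof is correct. Both your argument and the paper's rest on the same two pillars: translating the multiplicative congruence $\lambda\equiv^*1\pmod{N\mathcal{O}_K}$ into a matrix congruence modulo $N\mathcal{O}_F$, and then invoking the $\Gamma_{F,1}^+(N)$-invariance of $f$. The difference is purely organizational. The paper treats all degrees of freedom at once: given a second choice $(\mathfrak{c}',\xi_1',\xi_2',A_1')$ it writes $\mathfrak{c}'=\nu\mathfrak{c}$, obtains a single change-of-basis matrix $B\in\mathrm{GL}_2(\mathcal{O}_F)$ with $\det B\gg0$, and derives the key congruence $A'B\equiv A\pmod{N\mathcal{O}_F}$ directly from the module inclusion $(\nu-1)\mathcal{O}_K\subseteq N\mathfrak{c}^{-1}$. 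From this it reads off $A_1'\equiv\mathrm{diag}(1,\det B)\,A_1B^{-1}\pmod{N}$ and finishes with a single chain of equalities. Your staged decomposition is more modular and perhaps more transparent pedagogically; your stage-3 localization argument (showing $M_\lambda\equiv I_2$ at each $\mathfrak{p}\mid N$ because $(\xi_1,\xi_2)$ is an $\mathcal{O}_{F,\mathfrak{p}}$-basis of $\mathcal{O}_{K,\mathfrak{p}}$ and $\lambda-1\in N\mathcal{O}_{K,\mathfrak{p}}$) is the local-global reformulation of the paper's global inclusion $(\nu-1)\mathcal{O}_K\subseteq N\mathfrak{c}^{-1}$. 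The paper's unified treatment is shorter; your staged version makes the role of each choice more visible.
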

\begin{proof}
Let $\mathfrak{c}'$ be an integral ideal in $C$, and let $\xi_1'$ and $\xi_2'$ be
elements of $K$ such that $\mathfrak{c}'^{-1}=[\xi_1',\,\xi_2']_F$ and
\begin{equation}\label{rhx'}
\varphi_i(\xi')\in\mathbb{H}\quad\textrm{with}~\xi'=\frac{\xi_1'}{\xi_2'}\quad
(i=1,\,2,\,\ldots,\,g).
\end{equation}
Furthermore, let $A'$ be a matrix in $M_2(\mathcal{O}_F)$ such that
\begin{equation}\label{wA'x}
\begin{bmatrix}\omega_K\\1\end{bmatrix}=
A'\begin{bmatrix}\xi_1'\\\xi_2'\end{bmatrix}.
\end{equation}
Note by Lemma \ref{det>>0} that $\det(A')\gg0$ and $\gcd(\det(A'),\,N)=1$.
Since $C=[\mathfrak{c}]=[\mathfrak{c}']$, we have
\begin{equation*}
\mathfrak{c}'=\nu\mathfrak{c}\quad
\textrm{for some}~\nu\in K^\times~\textrm{such that}~\nu
\equiv^*1\Mod{N\mathcal{O}_K},
\end{equation*}
and hence
\begin{equation*}
[\xi_1',\,\xi_2']_F=\mathfrak{c}'^{-1}=\nu^{-1}\mathfrak{c}^{-1}=
\nu^{-1}[\xi_1,\,\xi_2]_F=
[\nu^{-1}\xi_1,\,\nu^{-1}\xi_2]_F.
\end{equation*}
Therefore we establish
\begin{equation}\label{x'Blx}
\begin{bmatrix}
\xi_1'\\\xi_2'
\end{bmatrix}=B\begin{bmatrix}\nu^{-1}\xi_1\\
\nu^{-1}\xi_2\end{bmatrix}\quad\textrm{for some}~B=\begin{bmatrix}b_1&b_2\\
b_3&b_4\end{bmatrix}\in\mathrm{GL}_2(\mathcal{O}_F),
\end{equation}
from which we obtain
\begin{equation}\label{xBx'}
\xi=\frac{\nu^{-1}\xi_1}{\nu^{-1}\xi_2}=B^{-1}\left(\frac{\xi_1'}{\xi_2'}\right)=B^{-1}(\xi').
\end{equation}
Here, one can show by (\ref{rhx}) and (\ref{rhx'})
that $\det(B)\gg0$ in a similar way to Lemma \ref{det>>0} (i)
\par
On the other hand, since $\mathfrak{c}$ and $\mathfrak{c}'=\nu\mathfrak{c}$ are ideals of $\mathcal{O}_K$, so is
$(\nu-1)\mathfrak{c}$. Moreover, since
$\nu\equiv^*1\Mod{N\mathcal{O}_K}$ and $\mathfrak{c}$ is relatively prime to $N\mathcal{O}_K$, we attain
$(\nu-1)\mathfrak{c}\subseteq N\mathcal{O}_K$ and so
\begin{equation*}
[(\nu-1)\omega_K,\,
\nu-1]_F=(\nu-1)\mathcal{O}_K\subseteq N\mathfrak{c}^{-1}=[N\xi_1,\,N\xi_2]_F.
\end{equation*}
This yields
\begin{equation*}
\begin{bmatrix}(\nu-1)\omega_K\\
\nu-1\end{bmatrix}=
A''\begin{bmatrix}N\xi_1\\N\xi_2\end{bmatrix}\quad
\textrm{for some}~A''\in M_2(\mathcal{O}_F).
\end{equation*}
We then see that
\begin{eqnarray*}
NA''\begin{bmatrix}\xi_1\\
\xi_2\end{bmatrix}&=&\nu\begin{bmatrix}\omega_K\\1\end{bmatrix}-\begin{bmatrix}\omega_K
\\1\end{bmatrix}\\
&=&\nu A'\begin{bmatrix}\xi_1'\\\xi_2'\end{bmatrix}-A\begin{bmatrix}\xi_1\\\xi_2\end{bmatrix}
\quad\textrm{by (\ref{wAx}) and (\ref{wA'x})}\\
&=&A'B\begin{bmatrix}\xi_1\\\xi_2\end{bmatrix}-A\begin{bmatrix}\xi_1\\\xi_2\end{bmatrix}
\quad\textrm{by (\ref{x'Blx})}\\
&=&(A'B-A)\begin{bmatrix}\xi_1\\\xi_2\end{bmatrix},
\end{eqnarray*}
and hence $NA''=A'B-A$. And, we know that
\begin{eqnarray*}
A'&=&NA''B^{-1}+AB^{-1}\quad
\textrm{since $B$ in (\ref{x'Blx}) belongs to $\mathrm{GL}_2(\mathcal{O}_F)$}\\
&\equiv&AB^{-1}\Mod{M_2(N\mathcal{O}_F)}\\
&\equiv&\begin{bmatrix}1&0\\0&d\end{bmatrix}A_1B^{-1}
\Mod{M_2(N\mathcal{O}_F)}\quad\textrm{by (\ref{AA_1})}\\
&\equiv&\begin{bmatrix}
1&0\\0&db^{-1}\end{bmatrix}
\left(
\begin{bmatrix}1&0\\0&b\end{bmatrix}A_1B^{-1}
\right)\Mod{M_2(N\mathcal{O}_F)}\quad
\textrm{with}~b=\det(B)~(\in\mathcal{O}_F^\times).
\end{eqnarray*}
Here, we observe that
\begin{equation*}
\begin{bmatrix}1&0\\0&b\end{bmatrix}A_1B^{-1}\in\mathrm{SL}_2(\mathcal{O}_F).
\end{equation*}
Thus, if we let
$d'=\det(A')$ and $A'_1$ be a matrix in $\mathrm{SL}_2(\mathcal{O}_F)$ such that
\begin{equation*}
A'\equiv\begin{bmatrix}1&0\\0&d'\end{bmatrix}A_1'\Mod{M_2(N\mathcal{O}_F)},
\end{equation*}
then we get
$d'\equiv db^{-1}\Mod{N\mathcal{O}_F}$
and
\begin{equation}\label{A'1AB}
A_1'\equiv\begin{bmatrix}1&0\\0&b\end{bmatrix}A_1B^{-1}\Mod{M_2(N\mathcal{O}_F)}.
\end{equation}
\par
Now, we derive that
\begin{eqnarray*}
&&(f\circ A_1')(\varphi_1(\xi'),\,\varphi_2(\xi'),\,\ldots,\,\varphi_g(\xi'))\\
&=&\left(f\circ\begin{bmatrix}1&0\\0&b\end{bmatrix}
A_1B^{-1}\right)(\varphi_1(\xi'),\,\varphi_2(\xi'),\,\ldots,\,\varphi_g(\xi'))\\
&&\hspace{5cm}\textrm{by (\ref{A'1AB}) and the fact that $f$ is modular for $\Gamma_{F,\,1}^+(N)$}\\
&=&((f\circ A_1)\circ B^{-1})(\varphi_1(\xi'),\,\varphi_2(\xi'),\,\ldots,\,\varphi_g(\xi'))\quad
\textrm{because}~\begin{bmatrix}1&0\\0&b\end{bmatrix}\in\Gamma_{F,\,1}^+(N)\\
&=&(f\circ A_1)((B^{-1})^{(1)}(\varphi_1(\xi')),\,
(B^{-1})^{(2)}(\varphi_2(\xi')),\,\ldots,\,(B^{-1})^{(g)}(\varphi_g(\xi')))\\
&&\hspace{5cm}\textrm{where}~(B^{-1})^{(i)}=\begin{bmatrix}\varphi_i(b_1)&\varphi_i(b_2)\\
\varphi_i(b_3)&\varphi_i(b_4)\end{bmatrix}\quad(i=1,\,2,\,\ldots,\,g)\\
&=&(f\circ A_1)(\varphi_1(B^{-1}(\xi')),\,\varphi_2(B^{-1}(\xi')),\,
\ldots,\,\varphi_g(B^{-1}(\xi')))\\
&=&(f\circ A_1)(\varphi_1(\xi),\,\varphi_2(\xi),\,\ldots,\,\varphi_g(\xi))\quad
\textrm{by (\ref{xBx'})}.
\end{eqnarray*}
This proves that $f(C)$ is independent of the choice of
$\mathfrak{c},\,\xi_1,\,\xi_2,\,A_1$.
\end{proof}

\begin{lemma}\label{f(w)}
If we let
$C=\phi_{K,\,N}([Q])$ for some $Q\in\mathcal{Q}_F^+(N,\,d_K)$ by \textup{Theorem \ref{main1}}
and \textup{Lemma \ref{+representative}}, then we have
\begin{equation*}
f(C)=f(\varphi_1(-\overline{\omega_Q}),\,
\varphi_2(-\overline{\omega_Q}),\,\ldots,\,
\varphi_g(-\overline{\omega_Q})).
\end{equation*}
\end{lemma}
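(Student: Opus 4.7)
The strategy is to exploit the flexibility in the choice of $\mathfrak{c},\xi_1,\xi_2,A_1$ in Definition \ref{rayclassinvariant} by producing an element $\nu\in\mathcal{O}_F$ that simultaneously (i) makes $\mathfrak{c}=\nu[\omega_Q,1]_F$ integral and in class $C$, (ii) forces an explicit basis of $\mathfrak{c}^{-1}$ with $\xi=-\overline{\omega_Q}$, and (iii) arranges that the matrix $A_1$ extracted from the diagonal decomposition in (\ref{AA_1}) lies in $\Gamma_{F,1}^+(N)$. Since $f$ is modular for $\Gamma_{F,1}^+(N)$, one then has $f\circ A_1=f$ and the conclusion follows.

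Since $\gcd(a,N)=1$ and $\mathcal{O}_F$ is a PID, the Chinese Remainder Theorem provides $\nu\in\mathcal{O}_F$ with $\nu\equiv 1\Mod{N\mathcal{O}_F}$ and $a\mid\nu$. Setting $\mathfrak{c}=[\nu\omega_Q,\nu]_F$, the identity $a\omega_Q=\omega_K+(b_K-b)/2\in\mathcal{O}_K$ together with $a\mid\nu$ forces $\mathfrak{c}\subseteq\mathcal{O}_K$, while $\nu\equiv 1\Mod{N\mathcal{O}_K}$ places $\mathfrak{c}$ in $C$. Lemma \ref{fractional}(ii) then gives $\mathfrak{c}^{-1}=\nu^{-1}a[\overline{\omega_Q},1]_F$, so taking $\xi_1=-\nu^{-1}a\overline{\omega_Q}$ and $\xi_2=\nu^{-1}a$ produces $\xi=-\overline{\omega_Q}$; the condition $\varphi_i(\xi)\in\mathbb{H}$ follows from $\varphi_i(\omega_Q)\in\mathbb{H}$ (verified via (\ref{rawbh}) using $a\gg 0$) combined with (\ref{rbar}).

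To compute $A$, I would substitute the identity $-a\overline{\omega_Q}=\omega_K+(b+b_K)/2$ into the defining relation (\ref{wAx}) and compare coefficients in the $F$-basis $\{\omega_K,1\}$ of $K$, yielding the upper-triangular matrix
\begin{equation*}
A=\begin{bmatrix} \nu & -\nu(b+b_K)/(2a)\\ 0 & \nu/a\end{bmatrix}\in M_2(\mathcal{O}_F),\qquad d=\det(A)=\nu^2/a,
\end{equation*}
with $d\gg 0$ and $\gcd(d,N)=1$, as predicted by Lemma \ref{det>>0}. Using $\nu\equiv 1\Mod{N\mathcal{O}_F}$ one finds $d\equiv a^{-1}\Mod{N\mathcal{O}_F}$, whence a direct calculation in $\mathcal{O}_F/N\mathcal{O}_F$ gives
\begin{equation*}
\begin{bmatrix} 1 & 0\\ 0 & d^{-1}\end{bmatrix}A\equiv\begin{bmatrix} 1 & -a^{-1}(b+b_K)/2\\ 0 & 1\end{bmatrix}\Mod{N\mathcal{O}_F}.
\end{equation*}
Lemma \ref{strong} lifts this to $A_1\in\mathrm{SL}_2(\mathcal{O}_F)$ satisfying (\ref{AA_1}); its lower row is $(0,1)$ modulo $N$, so with $\zeta=1\in\mathcal{O}_F^\times$ we obtain $A_1\in\Gamma_{F,1}^+(N)$. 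Modularity of $f$ then forces $f\circ A_1=f$, and Definition \ref{rayclassinvariant} yields $f(C)=f(\varphi_1(-\overline{\omega_Q}),\ldots,\varphi_g(-\overline{\omega_Q}))$.

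The main obstacle is engineering $A_1$ to land inside $\Gamma_{F,1}^+(N)$, not merely $\mathrm{SL}_2(\mathcal{O}_F)$; otherwise, an unwanted factor $f\circ A_1\neq f$ would persist. The condition $\nu\equiv 1\Mod{N\mathcal{O}_F}$ is precisely what pins down the lower-right corner of the reduction to $1\Mod{N\mathcal{O}_F}$, the image of the global unit $1\in\mathcal{O}_F^\times$; the condition $a\mid\nu$ separately handles integrality and produces the upper-triangular shape of $A$; and the coprimality $\gcd(a,N)=1$ is precisely what allows the Chinese Remainder Theorem to satisfy both conditions simultaneously.
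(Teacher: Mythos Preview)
Your proof is correct and follows essentially the same approach as the paper. The only cosmetic difference is that the paper produces the scalar explicitly as $\nu=a^e$ with $e=|(\mathcal{O}_F/N\mathcal{O}_F)^\times|$ (so that $a^e\equiv1\Mod{N\mathcal{O}_F}$ by Euler's theorem) rather than invoking the Chinese Remainder Theorem, and it writes down the explicit lift $A_1=\begin{bmatrix}1&-a^{e-1}(b+b_K)/2\\0&1\end{bmatrix}\in\mathrm{SL}_2(\mathcal{O}_F)$ directly instead of appealing to Lemma~\ref{strong}; your appeal to strong approximation is harmless but unnecessary, since your target upper-triangular unipotent matrix already lifts tautologically.
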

\begin{proof}
Let $Q=ax^2+bxy+cy^2$. If we set $e=|(\mathcal{O}_F/N\mathcal{O}_F)^\times|$, then we have
\begin{equation}\label{ae1}
a^e\equiv1\Mod{N\mathcal{O}_F}.
\end{equation}
And, we may take
$\mathfrak{c}=a^e[\omega_Q,\,1]_F$ as an integral ideal of $K$ belonging to the ray class $C$. Since
\begin{equation*}
\mathfrak{c}\overline{\mathfrak{c}}=a^{2e}\frac{1}{a}\mathcal{O}_F=a^{2e-1}\mathcal{O}_F
\end{equation*}
by Lemma \ref{fractional} (ii), we achieve
\begin{equation*}
\mathfrak{c}^{-1}=\frac{1}{a^{2e-1}}\overline{\mathfrak{c}}
=\frac{1}{a^{e-1}}[-\overline{\omega_Q},\,1]_F.
\end{equation*}
If we take
\begin{equation*}
\xi_1=-\frac{1}{a^{e-1}}\overline{\omega_Q}\quad\textrm{and}\quad
\xi_2=\frac{1}{a^{e-1}},
\end{equation*}
then we see that
\begin{equation*}
\mathfrak{c}^{-1}=[\xi_1,\,\xi_2]_F\quad\textrm{and}\quad
\xi=\frac{\xi_1}{\xi_2}=-\overline{\omega_Q}
\end{equation*}
and
\begin{equation*}
\varphi_i(\xi)=\varphi_i(-\overline{\omega_Q})=-\overline{\varphi_i(\omega_Q)}\in\mathbb{H}
\quad(i=1,\,2,\,
\ldots,\,g)
\end{equation*}
by (\ref{rbar}) and (\ref{rawbh}). We also deduce that
\begin{equation*}
\begin{bmatrix}
\omega_K\\1
\end{bmatrix}
=\begin{bmatrix}
a^e & -a^{e-1}(b+b_K)/2\\
0&a^{e-1}
\end{bmatrix}
\begin{bmatrix}
\xi_1\\\xi_2
\end{bmatrix}
\end{equation*}
and
\begin{equation*}
\begin{bmatrix}
a^e & -a^{e-1}(b+b_K)/2\\
0&a^{e-1}
\end{bmatrix}\equiv
\begin{bmatrix}1&0\\0&a^{2e-1}\end{bmatrix}
\begin{bmatrix}
1&-a^{e-1}(b+b_K)/2\\
0&1
\end{bmatrix}
\Mod{M_2(N\mathcal{O}_K)}
\end{equation*}
by (\ref{ae1}).
Therefore we obtain by Definition \ref{rayclassinvariant} that
\begin{eqnarray*}
f(C)&=&\left(f\circ\begin{bmatrix}
1&-a^{e-1}(b+b_K)/2\\
0&1
\end{bmatrix}\right)(\varphi_1(-\overline{\omega_Q}),\,
\varphi_2(-\overline{\omega_Q}),\,\ldots,\,
\varphi_g(-\overline{\omega_Q}))\\
&=&f(\varphi_1(-\overline{\omega_Q}),\,
\varphi_2(-\overline{\omega_Q}),\,\ldots,\,
\varphi_g(-\overline{\omega_Q}))\quad\textrm{because $f$ is modular for $\Gamma_{F,\,1}^+(N)$}.
\end{eqnarray*}
\end{proof}

\section {Functions on the canonical model}

We shall devote the following two sections to review
some necessary consequences in the theory of complex multiplication of abelian varieties
due to Shimura.
\par
Let $\mathbf{a}$ be the set of archimedean primes of $F$
and $\mathbf{h}$ be the set of nonarchimedean primes of $F$.
We denote by
\begin{eqnarray*}
G&=&\mathrm{GL}_2(F),\\
\displaystyle G_\mathbb{A}&=&\mathrm{GL}_2(F_\mathbb{A})~\simeq~
\left\{(\alpha_v)_v\in\prod_{v\in\mathbf{a}\cup\mathbf{h}}\mathrm{GL}_2(F_v)~|~
\alpha_v\in\mathrm{GL}_2(\mathcal{O}_{F,\,v})~\textrm{for almost all}~v\in\mathbf{h}\right\},\\
G_\mathbf{a}&=&\left\{
(\alpha_v)_v\in\mathrm{GL}_2(F_\mathbb{A})~|~\alpha_v=I_2~\textrm{for every}~v\in\mathbf{h}\right\},\\
G_\mathbf{h}&=&\left\{
(\alpha_v)_v\in\mathrm{GL}_2(F_\mathbb{A})~|~\alpha_v=I_2~\textrm{for any}~v\in\mathbf{a}\right\}
\quad(\textrm{so},~G_\mathbb{A}=G_\mathbf{a}G_\mathbf{h}),\\
G_{\mathbb{A}+}&=&\left\{\alpha\in G_\mathbb{A}~|~\det(\alpha_v)>0~\textrm{for each}~v\in\mathbf{a}\right\},\\
G_{\mathbf{a}+}&=&\{\alpha\in G_\mathbf{a}~|~\det(\alpha_v)>0~\textrm{for every}~v\in\mathbf{a}\},\\
D&=&\left\{\alpha\in G~|~\det(\alpha)\in\mathbb{Q}^\times\right\},\\
D_\mathbb{A}&=&\left\{\alpha\in G_\mathbb{A}~|~\det(\alpha)\in\mathbb{Q}_\mathbb{A}^\times\right\},\\
D_\mathbf{a}&=&D_\mathbb{A}\cap G_\mathbf{a},\\
D_\mathbf{h}&=&D_\mathbb{A}\cap G_\mathbf{h}.
\end{eqnarray*}
Let $S_\mathbf{h}$ be a compact subgroup of $G_\mathbf{h}$ containing
an open subgroup of $D_\mathbf{h}$, and set
\begin{equation*}
S=S_\mathbf{h}G_{\mathbf{a}+}
\end{equation*}
which is a subgroup of $G_{\mathbb{A}+}$.
Put
\begin{equation*}
\Gamma_S=G\cap S
\end{equation*}
which is an arithmetic subgroup of $G$ ($\hookrightarrow\mathrm{GL}_2(\mathbb{R})^g$), and let
$k_S$ be the subfield of the maximal abelian extension $\mathbb{Q}_\mathrm{ab}$ of $\mathbb{Q}$
corresponding to $\mathbb{Q}_\mathbb{A}^\times\cap F^\times\det(S)$.
\par
It is well known that the Satake compactification
of the Hilbert modular variety $\Gamma_S\backslash\mathbb{H}^g$
is isomorphic to a normal projective variety $V_S^*$. Moreover,
$\Gamma_S\backslash\mathbb{H}^g$ is mapped onto a Zariski open subset $V_S$ of
$V_S^*$ (\cite{B-B}). Let $\varphi_S$ be the
$\Gamma_S$-invariant map $\mathbb{H}^g\rightarrow V_S$ that gives this isomorphism.
We then call $(V_S,\,\varphi_S)$ a \textit{model} of $\Gamma_S\backslash\mathbb{H}^g$.
\par
Here, we shall take a model $(V_S,\,\varphi_S)$ of $\Gamma_S\backslash\mathbb{H}^g$
as a canonical one given in \cite[Theorem 26.3]{Shimura}, and
so $V_S$ is rational over $k_S$.
We further denote by
\begin{equation*}
\mathcal{K}_S=\{g\circ\varphi_S~|~
\textrm{$g$ are $k_S$-rational functions on $V_S$ in the sense of algebraic geometry}\}.
\end{equation*}
For details of this \textit{canonical model} $(V_S,\,\varphi_S)$,
one can refer to \cite{Shimura70}.
\par
We call a point $\mathbf{w}=(w_1,\,w_2,\,\ldots,\,w_g)$ of $\mathbb{H}^g$ a
\textit{CM-point} (in Hilbert modular case) associated with the CM-type $(K,\,\{\varphi_i\}_{i=1}^g)$ if
\begin{equation*}
w_1\in K\quad\textrm{and}\quad
w_i=\varphi_i(w_1)\quad(i=1,\,2,\,\ldots,\,g).
\end{equation*}
See \cite[$\S$24.10 and Proposition 24.13]{Shimura}.

\begin{lemma}\label{ww'}
Let $\mathbf{w}=(w_1,\,w_2,\,\ldots,\,w_g)$ and $\mathbf{w}'=(w_1',\,w_2',\,\ldots,\,w_g')$ be
CM-points on $\mathbb{H}^g$. If
\begin{equation}\label{ffassumption}
f(\mathbf{w})=
f(\mathbf{w}')\quad\textrm{for all}~f\in\mathcal{K}_S~\textrm{that are
finite at}~\mathbf{w},
\end{equation}
then
$\mathbf{w}=\gamma\mathbf{w}'$ for some $\gamma\in\Gamma_S$.
\end{lemma}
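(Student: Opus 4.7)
The plan is to transfer the hypothesis from the points $\mathbf{w},\mathbf{w}'\in\mathbb{H}^g$ to their images $P=\varphi_S(\mathbf{w})$ and $P'=\varphi_S(\mathbf{w}')$ in $V_S(\mathbb{C})$, show by a short algebro-geometric separation argument that these two points of the $k_S$-variety $V_S$ must coincide, and then use the $\Gamma_S$-invariance of $\varphi_S$ to conclude that $\mathbf{w}$ and $\mathbf{w}'$ lie in the same $\Gamma_S$-orbit.

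First I would unpack the definition of $\mathcal{K}_S$. Every $f\in\mathcal{K}_S$ is of the form $f=g\circ\varphi_S$ for a $k_S$-rational function $g$ on $V_S$, and $f$ is finite at $\mathbf{w}$ exactly when $g$ is regular at $P$; in that case $f(\mathbf{w})=g(P)$, and $f(\mathbf{w}')$ makes sense and equals $g(P')$ only if $g$ is also regular at $P'$. Hence the hypothesis (\ref{ffassumption}) is equivalent to the following purely geometric statement: every $k_S$-rational function on $V_S$ that is regular at $P$ is also regular at $P'$, and takes the same value at both points.

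The core step is to deduce $P=P'$ from this. Since $V_S$ is a Zariski-open subvariety of the projective $k_S$-variety $V_S^*$, it is quasi-projective over $k_S$, so any two $\mathbb{C}$-points lie in a common affine open subset $U\subset V_S$ defined over $k_S$ (obtainable by removing a hyperplane section from $V_S^*$ that misses both $P$ and $P'$). If $P\neq P'$, the evaluation homomorphisms $k_S[U]\to\mathbb{C}$ at $P$ and $P'$ are distinct, so some $g\in k_S[U]$ satisfies $g(P)\neq g(P')$; viewing $g$ as a $k_S$-rational function on $V_S$, it is regular and hence finite at $P$, and then $f=g\circ\varphi_S\in\mathcal{K}_S$ is finite at $\mathbf{w}$ but violates (\ref{ffassumption}), contradiction.

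Once $P=P'$ is established, the final step is immediate: because $(V_S,\varphi_S)$ is a model of $\Gamma_S\backslash\mathbb{H}^g$, the map $\varphi_S$ induces a bijection between $\Gamma_S\backslash\mathbb{H}^g$ and its image in $V_S$, so $\varphi_S(\mathbf{w})=\varphi_S(\mathbf{w}')$ forces $\mathbf{w}=\gamma\mathbf{w}'$ for some $\gamma\in\Gamma_S$. The only non-formal point in the argument is the separation of distinct $\mathbb{C}$-points by $k_S$-rational regular functions on a common affine chart; this is the sole place where the rationality of the canonical model plays a role, and it is straightforward once the quasi-projectivity of $V_S$ over $k_S$ is invoked.
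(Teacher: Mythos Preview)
Your proof is correct and follows essentially the same approach as the paper: show $\varphi_S(\mathbf{w})=\varphi_S(\mathbf{w}')$ by separating distinct points of $V_S$ with $k_S$-rational functions, then invoke the bijection $\Gamma_S\backslash\mathbb{H}^g\to V_S$. The paper executes the separation step more concretely by writing $\varphi_S=[\phi_0:\cdots:\phi_m]$ and using the coordinate ratios $\phi_j/\phi_i\in\mathcal{K}_S$ on the standard chart $\{\phi_i\neq0\}$, which is exactly your affine-chart argument specialized to the given projective embedding.
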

\begin{proof}
Suppose that our canonical model $(V_S,\,\varphi_S)$ of
$\Gamma_S\backslash\mathbb{H}^g$ satisfies
$V_S\subseteq\mathbb{P}^m(\mathbb{C})$ and $\varphi_S=[\phi_0:\phi_1:\cdots:\phi_m]$.
Since $\varphi_S(\mathbf{w})\in\mathbb{P}^m(\mathbb{C})$, we have
\begin{equation*}
\left(
\frac{\phi_0}{\phi_i}(\mathbf{w}),\,\ldots,\,
\frac{\phi_{i-1}}{\phi_i}(\mathbf{w}),\,
\frac{\phi_{i+1}}{\phi_i}(\mathbf{w}),\,
\ldots,\,
\frac{\phi_m}{\phi_i}(\mathbf{w})
\right)\in\mathbb{A}^m(\mathbb{C})\quad\textrm{for some}~0\leq i\leq m.
\end{equation*}
Furthermore, since
\begin{equation*}
\frac{\phi_j}{\phi_i}\in\mathcal{K}_S\quad
\textrm{for all}~j=0,\,\ldots,\,i-1,\,i+1,\,\ldots,\,m,
\end{equation*}
we get by the assumption (\ref{ffassumption}) that
\begin{equation*}
\left(
\frac{\phi_0}{\phi_i}(\mathbf{w}),\,\ldots,\,
\frac{\phi_{i-1}}{\phi_i}(\mathbf{w}),\,
\frac{\phi_{i+1}}{\phi_i}(\mathbf{w}),\,
\ldots,\,
\frac{\phi_m}{\phi_i}(\mathbf{w})
\right)
=\left(
\frac{\phi_0}{\phi_i}(\mathbf{w}'),\,\ldots,\,
\frac{\phi_{i-1}}{\phi_i}(\mathbf{w}'),\,
\frac{\phi_{i+1}}{\phi_i}(\mathbf{w}'),\,
\ldots,\,
\frac{\phi_m}{\phi_i}(\mathbf{w}')
\right).
\end{equation*}
This implies that $\mathbf{w}$ and $\mathbf{w}'$ represent the same point on
$\Gamma_S\backslash\mathbb{H}^g$, and hence
$\mathbf{w}=\gamma\mathbf{w}'$ for some $\gamma\in\Gamma_S$.
\end{proof}

Let $\mathcal{K}$ be the field of all Hilbert modular functions for
some arithmetic subgroups of $G$ with Fourier coefficients in $\mathbb{Q}_\mathrm{ab}$
(\cite[$\S$25.4--25.5]{Shimura}). Furthermore, let
\begin{equation*}
\mathcal{G}=D_\mathbb{A}GG_{\mathbf{a}+}\quad\textrm{and}\quad
\mathcal{G}_+=\mathcal{G}\cap G_{\mathbb{A}+}
\end{equation*}
which are subgroups of $G_\mathbb{A}$ since
$D_\mathbb{A}$ and $G_{\mathbf{a}+}$ are normal in $G_\mathbb{A}$
(\cite[$\S$II.8]{Shimura00}).

\begin{proposition}\label{tau}
There exists a homomorphism
\begin{equation*}
\tau_\mathcal{K}~:~\mathcal{G}_+\rightarrow\mathrm{Aut}(\mathcal{K})
\end{equation*}
satisfying
\begin{enumerate}
\item[\textup{(i)}]$f^{\tau_\mathcal{K}(\alpha)}=f\circ\alpha$ for every
$f\in\mathcal{K}$ and $\alpha\in G_+$,
\item[\textup{(ii)}] $\{\gamma\in\mathcal{G}_+~|~
\tau_\mathcal{K}(\gamma)=\mathrm{id}~\textrm{on}~\mathcal{K}_S\}=
F^\times(S\cap\mathcal{G})$.
\end{enumerate}
\end{proposition}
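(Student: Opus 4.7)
The plan is to define $\tau_\mathcal{K}$ on each factor of the decomposition $\mathcal{G}_+ = D_\mathbb{A}\, G\, G_{\mathbf{a}+}$ separately and glue. On $G_+$ I set $\tau_\mathcal{K}(\alpha)(f) = f\circ\alpha$; since any $f\in\mathcal{K}$ is modular for some arithmetic subgroup $\Gamma$ and the conjugate $\alpha^{-1}\Gamma\alpha$ is again arithmetic, the composition remains in $\mathcal{K}$, which immediately delivers property (i). On $D_\mathbb{A}$ I use the composition $D_\mathbb{A}\xrightarrow{\det}\mathbb{Q}_\mathbb{A}^\times\to\mathrm{Gal}(\mathbb{Q}_\mathrm{ab}/\mathbb{Q})$ obtained from Artin reciprocity, and let the resulting Galois element act on the $\mathbb{Q}_\mathrm{ab}$-Fourier coefficients of functions in $\mathcal{K}$; on $G_{\mathbf{a}+}$ the action is declared to be trivial.

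The gluing step requires that on the overlaps among the three factors the partial definitions agree. The essential case is $D$, viewed inside both $G$ and $D_\mathbb{A}$: a rational scalar matrix $\mathrm{diag}(r,r)$ with $r\in\mathbb{Q}^\times$ acts trivially geometrically, while $\det(\mathrm{diag}(r,r))=r^2\in\mathbb{Q}^\times$ is a principal idele and so has trivial Artin symbol, so both views give the identity. Compatibility between $D_\mathbf{a}$ and $G_{\mathbf{a}+}$ is automatic since both pieces act trivially on Fourier coefficients. This yields a well-defined group homomorphism $\tau_\mathcal{K}:\mathcal{G}_+\to\mathrm{Aut}(\mathcal{K})$.

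For property (ii), the inclusion $F^\times(S\cap\mathcal{G})\subseteq\ker(\tau_\mathcal{K}|_{\mathcal{K}_S})$ follows from the defining property of the canonical model: $V_S$ is rational over $k_S$, which by definition corresponds to $\mathbb{Q}_\mathbb{A}^\times\cap F^\times\det(S)$, so $D_\mathbb{A}$-elements whose determinants land in this subgroup act trivially on $\mathcal{K}_S$, while $\Gamma_S=G\cap S$ acts trivially on $\varphi_S$ by construction, and central $F^\times$-scalars act trivially both geometrically and on Fourier coefficients. For the reverse inclusion I would argue by contradiction: if $\gamma\in\mathcal{G}_+$ fixes every element of $\mathcal{K}_S$, then in particular $f(\gamma\mathbf{w})=f(\mathbf{w})$ for a suitable CM-point $\mathbf{w}$; Lemma \ref{ww'} forces $\gamma\mathbf{w}=\delta\mathbf{w}$ for some $\delta\in\Gamma_S$, and an analysis of the stabilizer combined with Shimura's reciprocity at CM-points pins $\gamma$ down to $F^\times(S\cap\mathcal{G})$. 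The hard part is this last step, which is essentially the content of Shimura's canonical-model theorem; in practice I would invoke \cite[Theorem 26.8 or its analogue in $\S$26]{Shimura} rather than reprove the CM-reciprocity from scratch.
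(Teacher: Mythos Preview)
The paper does not prove this proposition at all; it simply cites \cite[Theorem~26.8]{Shimura}. Your final paragraph acknowledges that ``in practice I would invoke \cite[Theorem~26.8]{Shimura}'', which is exactly the paper's entire proof, so at that level the two agree.

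However, your attempted construction of $\tau_\mathcal{K}$ before that fallback has a genuine gap. You propose to define $\tau_\mathcal{K}$ on $G_+$ by $f\mapsto f\circ\alpha$ and on $D_\mathbb{A}$ by letting $[\det(\alpha),\mathbb{Q}]$ act on Fourier coefficients, and then to glue. But the overlap $G\cap D_\mathbb{A}=D$ is \emph{not} just the rational scalar matrices: it contains, for instance, $\alpha=\begin{bmatrix}1&0\\0&p\end{bmatrix}\in D_+$ for any rational prime $p$. Under your $G_+$-recipe this $\alpha$ acts by the nontrivial fractional linear substitution $f\mapsto f\circ\alpha$, whereas under your $D_\mathbb{A}$-recipe it acts via $[\det(\alpha),\mathbb{Q}]=[p,\mathbb{Q}]=\mathrm{id}$ (a principal idele), hence trivially. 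The two partial definitions therefore disagree on $D_+$ beyond the center, and the gluing is not well defined. Checking compatibility only on $\mathrm{diag}(r,r)$ with $r\in\mathbb{Q}^\times$ is insufficient.

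The point is that the action of $D_\mathbb{A}$ (or of $D_\mathbf{h}$) on $\mathcal{K}$ is \emph{not} given purely by Galois action on Fourier coefficients; producing a single homomorphism on all of $\mathcal{G}_+$ that simultaneously restricts to the geometric action on $G_+$ and to the arithmetic action on $\mathbb{Q}_\mathrm{ab}$ is precisely the deep content of Shimura's canonical-model theorem, and cannot be obtained by a na\"{\i}ve glue. If you want to do more than cite Shimura, you must reproduce the construction via the tower of canonical models and the reciprocity at CM-points, which is substantially harder than what you have sketched.
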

\begin{proof}
See \cite[Theorem 26.8]{Shimura}.
\end{proof}

By $\mathcal{A}_0(\Gamma_S,\,k_S)$ we mean the
field of Hilbert modular functions for $\Gamma_S$ with Fourier coefficients in $k_S$, and let
\begin{equation*}
\Delta_S=\left\{
\begin{bmatrix}1&0\\0&t\end{bmatrix}~|~
t\in\hspace{-0.2cm}\prod_{p\,:\,\mathrm{primes}}\hspace{-0.2cm}\mathbb{Z}_p^\times~\textrm{such that}~
[t,\,\mathbb{Q}]=\mathrm{id}~\mathrm{on}~k_S
\right\},
\end{equation*}
where $[\,\cdot\,,\,\mathbb{Q}]:\mathbb{Q}_\mathbb{A}^\times\rightarrow
\mathrm{Gal}(\mathbb{Q}_\mathrm{ab}/\mathbb{Q})$ is the Artin map for $\mathbb{Q}$.

\begin{lemma}\label{AK}
If $\Delta_S\subseteq S$, then $\mathcal{A}_0(\Gamma_S,\,k_S)=\mathcal{K}_S$.
\end{lemma}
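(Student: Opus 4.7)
The plan is to prove the two inclusions separately. The inclusion $\mathcal{K}_S \subseteq \mathcal{A}_0(\Gamma_S,\,k_S)$ is straightforward: any $f = g\circ\varphi_S \in \mathcal{K}_S$ is $\Gamma_S$-invariant because $\varphi_S$ is, and its Fourier expansion at each cusp has coefficients in $k_S$ since $(V_S,\,\varphi_S)$ is rational over $k_S$ by the very construction of the canonical model \cite[Theorem 26.3]{Shimura}; hence $f \in \mathcal{A}_0(\Gamma_S,\,k_S)$.

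For the reverse inclusion, I set $H = F^\times(S \cap \mathcal{G})$. By Proposition \ref{tau}(ii), $H$ is precisely the $\tau_\mathcal{K}$-stabilizer of $\mathcal{K}_S$ inside $\mathcal{G}_+$, and the Galois correspondence built into Shimura's canonical-model theory then characterizes $\mathcal{K}_S$ as the fixed subfield $\mathcal{K}^{\tau_\mathcal{K}(H)}$. Given $f \in \mathcal{A}_0(\Gamma_S,\,k_S) \subseteq \mathcal{K}$, it therefore suffices to verify $\tau_\mathcal{K}(\gamma) f = f$ for every $\gamma \in H$, and I would do so on three types of generators. First, for $\gamma \in \Gamma_S = G \cap S \subseteq G_+$, Proposition \ref{tau}(i) gives $\tau_\mathcal{K}(\gamma) f = f\circ\gamma = f$ by modular invariance. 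Second, scalar matrices in $F^\times$ act trivially on $\mathbb{H}^g$, hence on $f$. Third, for $\alpha = \begin{bmatrix}1 & 0 \\ 0 & t\end{bmatrix} \in \Delta_S$, which lies in $S \cap \mathcal{G}$ thanks to the hypothesis $\Delta_S \subseteq S$ together with $\Delta_S \subseteq D_\mathbb{A} \subseteq \mathcal{G}$, Shimura's reciprocity law describes $\tau_\mathcal{K}(\alpha)$ as acting on the Fourier coefficients of $f$ by the automorphism $[t,\,\mathbb{Q}] \in \mathrm{Gal}(\mathbb{Q}_\mathrm{ab}/\mathbb{Q})$; since $[t,\,\mathbb{Q}]$ fixes $k_S$ by the very definition of $\Delta_S$ and the coefficients of $f$ lie in $k_S$, each coefficient is preserved, so $\tau_\mathcal{K}(\alpha) f = f$.

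The main obstacle is bridging from these three classes of generators to all of $H$. Using the decomposition $\mathcal{G} = D_\mathbb{A} G G_{\mathbf{a}+}$, an arbitrary element of $S \cap \mathcal{G}$ factors as $d g h$ with $d \in D_\mathbb{A}$, $g \in G$, $h \in G_{\mathbf{a}+}$, and I would rearrange to push the finite-idelic diagonal component into $\Delta_S$ up to $F^\times$ by invoking strong approximation for $\mathrm{SL}_2$ (Lemma \ref{strong}), then absorb the global factor into $\Gamma_S$. The assumption $\Delta_S \subseteq S$ enters in an essential way precisely here, since without it the rearranged diagonal pieces would escape $S$ and the argument would collapse.
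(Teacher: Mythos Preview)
The paper does not prove this lemma; its entire proof reads ``See \cite[Theorem 26.4]{Shimura}.'' Your proposal therefore supplies substantially more than the paper itself, and there is no in-paper argument against which to compare it directly.

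Your strategy is the natural one and matches the shape of Shimura's own argument: obtain the easy inclusion from $k_S$-rationality of the canonical model, and for the hard inclusion use the Galois-theoretic characterization $\mathcal{K}_S = \mathcal{K}^{\tau_\mathcal{K}(H)}$ with $H = F^\times(S\cap\mathcal{G})$ coming from Proposition~\ref{tau}(ii), then verify that any $f\in\mathcal{A}_0(\Gamma_S,k_S)$ is fixed by $\tau_\mathcal{K}$ on each of three generating classes. The classes you isolate ($\Gamma_S$, central $F^\times$, and $\Delta_S$) are the correct ones, and your identification of the hypothesis $\Delta_S\subseteq S$ as the hinge is exactly right.

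The genuine gap is where you yourself locate it. Your sketched factorization $\gamma = dgh$ with $d\in D_\mathbb{A}$, $g\in G$, $h\in G_{\mathbf{a}+}$ is too coarse to close the argument: the individual factors $d$ and $g$ need not lie in $S$, so you cannot simply invoke your three cases on them separately. What is actually required is, after stripping off the archimedean part, to show that any element of $S_\mathbf{h}\cap D_\mathbf{h}$ can be written as a product of an element of $\Delta_S$, a scalar, and an element of $\mathrm{SL}_2$ over the finite adeles that is congruent modulo $N$ to something in $\Gamma_S$ (which is where strong approximation for $\mathrm{SL}_2$ enters). Making this precise uses that $\det(\gamma)$ for $\gamma\in S\cap D_\mathbf{h}$ lands in $\mathbb{Q}_\mathbb{A}^\times\cap F^\times\det(S)$, the very group whose class-field-theoretic image defines $k_S$; this is why the $[t,\mathbb{Q}]$-action on Fourier coefficients is trivial on $k_S$. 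Your final paragraph gestures at this chain of reductions but does not carry it out, so as written the proof is an outline rather than a complete argument.
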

\begin{proof}
See \cite[Theorem 26.4]{Shimura}.
\end{proof}

Now, we set
\begin{equation*}
\Delta=\left\{\begin{bmatrix}1&0\\0&\xi\end{bmatrix}~|~\xi\in\mathcal{O}_F^\times\hspace{-0.2cm}\prod_{p\,:\,\mathrm{primes}}
\hspace{-0.3cm}\mathbb{Z}_p^\times\right\}
\end{equation*}
and
\begin{equation}\label{Sh}
S_\mathbf{h}=\Delta\left\{\beta=(\beta_v)_v\in D_\mathbf{h}~\Bigg|~
\begin{array}{l}
\textrm{for every}~v\in\mathbf{h},~\beta_v\in\mathrm{GL}_2(\mathcal{O}_{F,\,v})~
\textrm{and}~\\
\beta_v\equiv\begin{bmatrix}\mu&\mathrm{*}\\0&\mathrm{*}\end{bmatrix}
\Mod{M_2(N\mathcal{O}_{F,\,v})}~
\textrm{for some}~\mu\in\mathcal{O}_F^\times
\end{array}
\right\}.
\end{equation}
Here, a unit $\mu$ in $\mathcal{O}_F$ is given for an element $\beta$ of $D_\mathbf{h}$ so that it does not vary
according to each component $\beta_v$.

\begin{proposition}\label{GkA}
If we let $S=S_\mathbf{h}G_{\mathbf{a}+}$ with $S_\mathbf{h}$ stated in \textup{(\ref{Sh})}, then we have
\begin{enumerate}
\item[\textup{(i)}] $\Gamma_S=\Gamma_{F,\,1}^+(N)$ and
$k_S=\mathbb{Q}$,
\item[\textup{(ii)}] $\mathcal{A}_0(S,\,k_S)=\mathcal{K}_S$.
\end{enumerate}
\end{proposition}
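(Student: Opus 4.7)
The plan is to unpack the adelic definitions of $\Gamma_S$ and $\det(S)$ and match them against the explicit description of $\Gamma_{F,\,1}^+(N)$ and the reciprocity subgroup cutting out $\mathbb{Q}$.

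First I would establish (i) by taking $\gamma=\begin{bmatrix}c_1&c_2\\c_3&c_4\end{bmatrix}\in G$, embedding it diagonally into $G_\mathbb{A}$, and analyzing when $\gamma\in S=S_\mathbf{h}G_{\mathbf{a}+}$. The archimedean factor forces $\det(\gamma)\gg 0$. At each finite place $\gamma=\delta_v\beta_v$ with $\delta_v=\begin{bmatrix}1&0\\0&\xi_v\end{bmatrix}$ for an idele $\xi\in\mathcal{O}_F^\times\prod_p\mathbb{Z}_p^\times$, and $\beta_v\in\mathrm{GL}_2(\mathcal{O}_{F,\,v})$ upper triangular mod $N$ with $(1,1)$-entry congruent to a \emph{uniform} unit $\mu\in\mathcal{O}_F^\times$. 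Since $\xi_v\in\mathcal{O}_{F,\,v}^\times$ at every $v$, the condition $\beta_v=\delta_v^{-1}\gamma\in\mathrm{GL}_2(\mathcal{O}_{F,\,v})$ collapses to $\gamma\in\mathrm{GL}_2(\mathcal{O}_F)$, and the congruence reads $c_3\equiv 0$, $c_1\equiv\mu\Mod{N\mathcal{O}_F}$; since $\det(\gamma)\in\mathcal{O}_F^\times$, one then reads off $c_4\equiv\det(\gamma)\mu^{-1}\Mod{N\mathcal{O}_F}$ with $\det(\gamma)\mu^{-1}\in\mathcal{O}_F^\times$. This is precisely the defining condition of $\Gamma_{F,\,1}^+(N)$, and the converse is immediate by choosing $\xi=1$ in $\Delta$, setting $\beta=\gamma$, and letting $\mu$ be the unit witnessing $c_1\bmod N$.

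Next I would compute $k_S$. Step 1 gives $\det(S_\mathbf{h})=\mathcal{O}_F^\times\cdot\prod_p\mathbb{Z}_p^\times$ --- the $\Delta$-factor supplies $\mathcal{O}_F^\times\prod_p\mathbb{Z}_p^\times$, while $\det(\beta)\in\mathbb{Q}_\mathbb{A}^\times\cap\prod_{v\in\mathbf{h}}\mathcal{O}_{F,\,v}^\times=\prod_p\mathbb{Z}_p^\times$ accounts for the rest --- and $\det(G_{\mathbf{a}+})$ is the group of totally positive archimedean ideles. Hence $F^\times\det(S)\supseteq\prod_p\mathbb{Z}_p^\times\times\mathbb{R}_{>0}$, since $\mathbb{R}_{>0}$ diagonally embeds into the totally positive archimedean ideles of $F$. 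Combined with $\mathbb{Q}_\mathbb{A}^\times=\mathbb{Q}^\times\bigl(\prod_p\mathbb{Z}_p^\times\times\mathbb{R}_{>0}\bigr)$, this shows $\mathbb{Q}_\mathbb{A}^\times\cap F^\times\det(S)=\mathbb{Q}_\mathbb{A}^\times$, which corresponds under the Artin map to the full Galois group $\mathrm{Gal}(\mathbb{Q}_\mathrm{ab}/\mathbb{Q})$; hence $k_S=\mathbb{Q}$.

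For (ii) I would invoke Lemma \ref{AK} by verifying $\Delta_S\subseteq S$. Since $k_S=\mathbb{Q}$, the requirement $[t,\,\mathbb{Q}]=\mathrm{id}$ on $k_S$ is vacuous, so $\Delta_S$ consists of all $\begin{bmatrix}1&0\\0&t\end{bmatrix}$ with $t\in\prod_p\mathbb{Z}_p^\times$; any such matrix lies in $\Delta\subseteq S_\mathbf{h}$ (taking $\mu=1$ and the trivial $\beta$-factor), and hence in $S$. Lemma \ref{AK} then yields $\mathcal{A}_0(\Gamma_S,\,k_S)=\mathcal{K}_S$. The main obstacle will be the bookkeeping in Step 1: recognizing that the $\Delta$-factor in $S_\mathbf{h}$ is exactly what allows the $(2,2)$-entry of $\gamma$ to be an \emph{arbitrary} unit of $\mathcal{O}_F^\times$ modulo $N$ instead of being rigidly tied to the $(1,1)$-entry, and verifying that the uniformity of $\mu$ over $v\in\mathbf{h}$ is automatic for a constant adele $\gamma\in\mathrm{GL}_2(\mathcal{O}_F)$. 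Once this adelic decomposition is unwound cleanly, the remaining unit-idele manipulations and the Artin-reciprocity identification of $k_S$ are routine.
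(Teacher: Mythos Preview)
Your overall strategy matches the paper's, and your arguments for $k_S=\mathbb{Q}$ and for part (ii) are correct. There is, however, a genuine gap in your converse inclusion $\Gamma_{F,\,1}^+(N)\subseteq\Gamma_S$. You propose to take $\xi=1$ in $\Delta$ and set $\beta=\gamma$, but membership in $D_\mathbf{h}$ requires $\det(\beta)\in\mathbb{Q}_\mathbb{A}^\times$; since $\gamma$ is a constant adele this forces $\det(\gamma)\in\mathbb{Q}^\times$. For $F\neq\mathbb{Q}$, a generic $\gamma\in\Gamma_{F,\,1}^+(N)$ has $\det(\gamma)$ a totally positive unit of $\mathcal{O}_F$ that is not rational, so your decomposition fails.

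The repair is exactly what the $\Delta$-factor is designed for, and it is what the paper does: set $\xi=\det(\gamma)\in\mathcal{O}_F^\times\subseteq\mathcal{O}_F^\times\prod_p\mathbb{Z}_p^\times$ and let
\[
\delta=\begin{bmatrix}1&0\\0&\xi^{-1}\end{bmatrix}\gamma=\begin{bmatrix}c_1&c_2\\ c_3/\xi & c_4/\xi\end{bmatrix}\in\mathrm{SL}_2(\mathcal{O}_F)\subseteq D_\mathbf{h}.
\]
Then from $c_3\equiv0$, $c_4\equiv\zeta\Mod{N\mathcal{O}_F}$ and $\det(\gamma)\equiv c_1\zeta\Mod{N\mathcal{O}_F}$ one reads off $\delta\equiv\begin{bmatrix}\xi/\zeta&*\\0&\zeta/\xi\end{bmatrix}\Mod{N}$ with $\mu=\xi/\zeta\in\mathcal{O}_F^\times$, placing $\gamma$ in $S_\mathbf{h}$. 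Apart from this slip, your write-up is essentially the paper's proof with slightly more detail supplied for the $k_S$ computation.
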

\begin{proof}
\begin{enumerate}
\item[(i)] Let $\alpha\in\Gamma_S=G\cap S$,
and so $\det(\alpha)\gg0$ and
\begin{equation*}
\alpha_v=\begin{bmatrix}a_1&a_2\\a_3&a_4\end{bmatrix}=
\begin{bmatrix}1&0\\0&\xi_v\end{bmatrix}\beta_v\quad(v\in\mathbf{h})
\end{equation*}
for some $\displaystyle\xi\in\mathcal{O}_F^\times\prod_p\mathbb{Z}_p^\times$ and $\beta\in D_\mathbf{h}$ such that
\begin{equation}\label{bb}
\beta_v\in\mathrm{GL}_2(\mathcal{O}_{F,\,v})\quad\textrm{and}\quad
\beta_v\equiv\begin{bmatrix}\mu&\mathrm{*}\\
0&\mathrm{*}\end{bmatrix}\Mod{M_2(N\mathcal{O}_{F,\,v})}~
\textrm{for some}~\mu\in\mathcal{O}_F^\times.
\end{equation}
Since
\begin{equation*}
\alpha_v=\begin{bmatrix}1&0\\0&\xi_v\end{bmatrix}
\begin{bmatrix}a_1&a_2\\a_3/\xi_v&a_4/\xi_v\end{bmatrix}
\end{equation*}
and $v\in\mathbf{h}$ is arbitrary,
we get by (\ref{bb}) that
\begin{equation}\label{a1234}
a_1,\,a_2,\,a_3,\,a_4\in\mathcal{O}_F\quad\textrm{and}\quad
a_1\equiv\mu,\,a_3\equiv0\Mod{N\mathcal{O}_F}.
\end{equation}
Applying the same argument to $\alpha^{-1}\in\Gamma_S$,
we derive that
every entry of $\alpha^{-1}$ belongs to $\mathcal{O}_F$, and hence
$\det(\alpha)\in\mathcal{O}_F^\times$. Since
\begin{equation*}
\det(\alpha)\equiv a_1a_4-a_2a_3\equiv a_1a_4\equiv\mu a_4\Mod{N\mathcal{O}_F}
\end{equation*}
by (\ref{a1234}), we obtain
\begin{equation*}
a_4\equiv\frac{\det(\alpha)}{\mu}\Mod{N\mathcal{O}_F}\quad
\textrm{with}~\frac{\det(\alpha)}{\mu}\in\mathcal{O}_F^\times,
\end{equation*}
which holds $\Gamma_S\subseteq\Gamma_{F,\,1}^+(N)$.
\par
Now, let $\gamma=\begin{bmatrix}c_1&c_2\\c_3&c_4\end{bmatrix}\in\Gamma_{F,\,1}^+(N)$, and so
$\det(\gamma)\gg0$ and
\begin{equation}\label{g0z}
\gamma\equiv\begin{bmatrix}\mathrm{*}&\mathrm{*}\\
0&\zeta\end{bmatrix}\Mod{M_2(N\mathcal{O}_F)}~\textrm{for some}~\zeta\in\mathcal{O}_F^\times.
\end{equation}
Letting $\xi=\det(\gamma)$ ($\in\mathcal{O}_F^\times$), we attain
\begin{equation*}
\gamma=\begin{bmatrix}1&0\\0&\xi\end{bmatrix}
\delta\quad
\textrm{with}~\delta=\begin{bmatrix}c_1&c_2\\c_3/\xi&c_4/\xi\end{bmatrix}\in\mathrm{SL}_2(\mathcal{O}_F).
\end{equation*}
It then follows from (\ref{g0z}) that
\begin{equation*}
\beta_v\equiv\begin{bmatrix}
\xi/\zeta&\mathrm{*}\\0&\zeta/\xi\end{bmatrix}\Mod{M_2(N\mathcal{O}_{F,\,v})}\quad(v\in\mathbf{h}),
\end{equation*}
which shows that $\gamma$ belongs to $\Gamma_S$.
Therefore we achieve
$\Gamma_{F,\,1}^+(N)\subseteq\Gamma_S$, and hence
$\Gamma_S=\Gamma_{F,\,1}^+(N)$.
Moreover, since
$\mathbb{Q}_\mathbb{A}^\times\cap F^\times\det(S)=\mathbb{Q}_\mathbb{A}^\times$, we conclude
$k_S=\mathbb{Q}$.
\item[(ii)] Since $\Delta_S\subseteq S$, we get $\mathcal{A}_0(\Gamma_S,\,k_S)=\mathcal{K}_S$ by Lemma \ref{AK}.
\end{enumerate}
\end{proof}

\section {Shimura's reciprocity law}

In this section, we shall briefly introduce Shimura's reciprocity law applied to
singular values of Hilbert modular functions, from which we will be able to analyze the behavior of the invariant $f(C)$ defined in $\S$\ref{defininginvariant} under
a certain Galois group.
\par
Take any finite Galois extension $L$ of $\mathbb{Q}$ containing $K$. Let
\begin{eqnarray*}
T&=&\{\sigma\in\mathrm{Gal}(L/\mathbb{Q})~|~\sigma|_K=\varphi_i~
\textrm{for some}~i=1,\,2,\,\ldots,\,g\},\\
T^*&=&\{\sigma^{-1}~|~\sigma\in T\},\\
H^*&=&\{\gamma\in\mathrm{Gal}(L/\mathbb{Q})~|~\gamma T^*=T^*\}.
\end{eqnarray*}
Let $K^*$ be the subfield of $L$ corresponding to $H^*$ with
$\{\psi_j\}_{j=1}^n$ the set of all embeddings of $K^*$ into $\mathbb{C}$ obtained from the elements of $T^*$.
Then, it is well known that $(K^*,\,\{\psi_j\}_{j=1}^n)$ is a primitive CM-type
and
\begin{equation*}
K^*=\mathbb{Q}\left(\sum_{i=1}^g\varphi_i(a)~|~a\in K\right).
\end{equation*}
This CM-type is independent of the choice of $L$, and
called the \textit{reflex} of $(K,\,\{\varphi_i\}_{i=1}^g)$
(\cite[Proposition 28 in Chapter II]{Shimura}).
We consider a group homomorphism
\begin{equation*}
\begin{array}{ccccc}
g&:&(K^*)^\times&\rightarrow&K^\times\\
&&d&\mapsto&\displaystyle\prod_{j=1}^n\psi_j(d)
\end{array}
\end{equation*}
which can be continuously extended to the homomorphism
\begin{equation*}
g~:~(K^*)_\mathbb{A}^\times~\rightarrow~K_\mathbb{A}^\times
\end{equation*}
of idele groups. Furthermore, we have a homomorphism
\begin{equation*}
\begin{array}{ccccl}
\mathfrak{g}&:&I_{K^*}(\mathcal{O}_{K^*})&\rightarrow&I_K(\mathcal{O}_K)\\
&&\mathfrak{a}&\mapsto&~~\mathfrak{g}(\mathfrak{a})\quad\textrm{satisfying}\quad
\displaystyle
\mathfrak{g}(\mathfrak{a})\mathcal{O}_L=\prod_{j=1}^n\psi_j(\mathfrak{a})\mathcal{O}_L
\end{array}
\end{equation*}
(\cite[Proposition 29 in Chapter II]{Shimura}). Here we observe that
\begin{equation}\label{gdgd}
\mathfrak{g}(d\mathcal{O}_{K^*})=g(d)\mathcal{O}_K
\quad\textrm{for all}~d\in(K^*)^\times.
\end{equation}

\begin{lemma}\label{gd1}
If $d$ is an element of $(K^*)^\times$ such that
$d\equiv^*1\Mod{N\mathcal{O}_{K^*}}$, then
\begin{equation*}
g(d)\equiv^*1\Mod{N\mathcal{O}_K}.
\end{equation*}
\end{lemma}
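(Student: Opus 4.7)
The plan is to work inside a single Galois extension $L/\mathbb{Q}$ containing $K$, $K^*$, and all the images $\psi_j(K^*)$, so that each embedding $\psi_j$ lifts to an automorphism $\tilde{\psi}_j \in \mathrm{Gal}(L/\mathbb{Q})$. Then $g(d) = \prod_{j=1}^n \tilde{\psi}_j(d)$ as an identity in $L$, and the question reduces to checking a multiplicative congruence in $L$ and descending it to $K$.

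First I would rewrite the hypothesis in terms of valuations of $L$. For any prime $\mathfrak{P}$ of $L$ lying over a prime $\mathfrak{p}$ of $K^*$ dividing $N\mathcal{O}_{K^*}$, the hypothesis $v_\mathfrak{p}(d-1)\ge n_\mathfrak{p}$ combined with $v_\mathfrak{P}|_{K^*}=e_{\mathfrak{P}/\mathfrak{p}}v_\mathfrak{p}$ gives
\[
v_\mathfrak{P}(d-1)\ \ge\ e_{\mathfrak{P}/\mathfrak{p}}\,n_\mathfrak{p}\ =\ v_\mathfrak{P}(N),
\]
and the same inequality is trivial at primes of $L$ not dividing $N$. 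Thus the hypothesis is equivalent to $d-1\in N\mathcal{O}_{L,\mathfrak{P}}$ for every prime $\mathfrak{P}$ of $L$.

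Next I would transport this congruence through each $\tilde{\psi}_j$. Since $\tilde{\psi}_j$ is a $\mathbb{Q}$-automorphism of $L$ fixing the rational integer $N$, the identity $v_\mathfrak{P}(\tilde{\psi}_j(x)) = v_{\tilde{\psi}_j^{-1}(\mathfrak{P})}(x)$ and the fact that $\tilde{\psi}_j^{-1}(\mathfrak{P})$ lies over the same rational prime as $\mathfrak{P}$ yield
\[
v_\mathfrak{P}\bigl(\tilde{\psi}_j(d)-1\bigr)\ =\ v_{\tilde{\psi}_j^{-1}(\mathfrak{P})}(d-1)\ \ge\ v_{\tilde{\psi}_j^{-1}(\mathfrak{P})}(N)\ =\ v_\mathfrak{P}(N).
\]
In particular each $\tilde{\psi}_j(d)$ lies in $\mathcal{O}_{L,\mathfrak{P}}$ whenever $\mathfrak{P}\mid N$, and is congruent to $1$ modulo $N\mathcal{O}_{L,\mathfrak{P}}$. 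A straightforward induction on $n$ using $xy-1=(x-1)y+(y-1)$ then gives $\prod_{j=1}^n\tilde{\psi}_j(d)\equiv 1\pmod{N\mathcal{O}_{L,\mathfrak{P}}}$, i.e.\ $v_\mathfrak{P}(g(d)-1)\ge v_\mathfrak{P}(N)$ for every $\mathfrak{P}$ of $L$ dividing $N$.

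Finally I would descend the conclusion to $K$. For any prime $\mathfrak{q}$ of $K$ dividing $N\mathcal{O}_K$, choose a prime $\mathfrak{P}$ of $L$ above $\mathfrak{q}$; dividing the inequality above by $e_{\mathfrak{P}/\mathfrak{q}}$ produces $v_\mathfrak{q}(g(d)-1)\ge n_\mathfrak{q}$, which is the definition of $g(d)\equiv^* 1\pmod{N\mathcal{O}_K}$. The whole argument is essentially formal once the valuation bookkeeping is in place; the only point that needs a little care is verifying that the initial translation of $d\equiv^* 1\pmod{N\mathcal{O}_{K^*}}$ up to $L$ really produces $v_\mathfrak{P}(d-1)\ge v_\mathfrak{P}(N)$ rather than some weaker inequality, which is the main (if mild) obstacle.
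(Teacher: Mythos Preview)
Your argument is correct, with one harmless slip: the claim that the inequality $v_\mathfrak{P}(d-1)\ge v_\mathfrak{P}(N)$ is ``trivial at primes of $L$ not dividing $N$'' is false in general, since $d\in(K^*)^\times$ need not be integral at such primes. But you never use this; the rest of the proof only touches primes $\mathfrak{P}\mid N$, and there your reasoning goes through cleanly.

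The paper takes a different, more elementary route. Rather than passing to valuations in $L$ and pushing congruences through automorphisms, it simply clears denominators: from $d\equiv^*1\Mod{N\mathcal{O}_{K^*}}$ one writes $M(d-1)=N\nu$ with $M\in\mathbb{Z}_{>0}$, $\gcd(M,N)=1$, and $\nu$ integral, then computes
\[
M^n g(d)=\prod_{j=1}^n\bigl(M+N\psi_j(\nu)\bigr)\equiv M^n\Mod{N\mathcal{O}_K},
\]
which gives the conclusion in one line. Your approach is more structural and makes transparent why the result is insensitive to the particular embeddings $\psi_j$ (they are $\mathbb{Q}$-automorphisms, hence preserve $N$-adic size); the paper's approach is shorter and avoids any bookkeeping with ramification indices. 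Both are perfectly valid.
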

\begin{proof}
Since $d\equiv^*1\Mod{N\mathcal{O}_{K^*}}$,
there exist $M\in\mathbb{Z}_{>0}$ and $\nu\in\mathcal{O}_K$ such that
\begin{equation*}
\gcd(M,\,N)=1\quad\textrm{and}\quad
M(d-1)=N\nu.
\end{equation*}
We then achieve that
\begin{equation*}
M^ng(d)=M^ng(1+M^{-1}N\nu)=M^n\prod_{j=1}^n\psi_j(1+M^{-1}N\nu)=
\prod_{j=1}^n\left\{
M+N\psi_j(\nu)\right\}\equiv M^n\Mod{N\mathcal{O}_K},
\end{equation*}
and hence
\begin{equation*}
g(d)\equiv^*1\Mod{N\mathcal{O}_K}.
\end{equation*}

\end{proof}

By Lemma \ref{gd1} and (\ref{gdgd}), we obtain a homomorphism
\begin{equation*}
\begin{array}{ccccc}
\mathfrak{g}_N&:&\mathcal{C}(N\mathcal{O}_{K^*})&\rightarrow&\mathcal{C}(N\mathcal{O}_K)\\
&&\mathrm{[}\mathfrak{a}\mathrm{]}&\mapsto&
\mathrm{[}\mathfrak{g}(\mathfrak{a})\mathrm{]}
\end{array}
\end{equation*}
of ray class groups.
For the element $\xi$ of $K$ described in $\S$\ref{defininginvariant}, let
\begin{equation*}
h_\xi~:~K~\rightarrow~M_2(F)
\end{equation*}
be the regular representation with respect to the ordered basis $\{\xi,\,1\}$ of $K$ over $F$, namely,
$h_\xi$ is defined by the relation
\begin{equation*}
\nu\begin{bmatrix}\xi\\1\end{bmatrix}=h_\xi(\nu)\begin{bmatrix}\xi\\1\end{bmatrix}\quad(\nu\in K).
\end{equation*}

\begin{proposition}[Shimura's reciprocity law]\label{reciprocity}
Let $f=f(z_1,\,z_2,\,\ldots,\,z_g)$ be a
Hilbert modular function for an arithmetic subgroup of $\mathrm{GL}_2(F)$ with
Fourier coefficients in $\mathbb{Q}_\mathrm{ab}$. Suppose that $f$ is finite at $(\varphi_1(\xi),\,\varphi_2(\xi),\,\ldots,\,\varphi_g(\xi))$. Then,
\begin{enumerate}
\item[\textup{(i)}] $f(\varphi_1(\xi),\,\varphi_2(\xi),\,\ldots,\,
\varphi_g(\xi))$ belongs to the maximal abelian extension $K^*_\mathrm{ab}$ of $K^*$.
\item[\textup{(ii)}] If $s\in(K^*)_\mathbb{A}^\times$, then
\begin{equation*}
f(\varphi_1(\xi),\,\varphi_2(\xi),\,\ldots,\,
\varphi_g(\xi))^{[s,\,K^*]}=
f^{\tau_\mathcal{K}(h_\xi(g(s^{-1})))}(\varphi_1(\xi),\,\varphi_2(\xi),\,\ldots,\,
\varphi_g(\xi)),
\end{equation*}
where $[\,\cdot\,,\,K^*]:(K^*)_\mathbb{A}^\times\rightarrow\mathrm{Gal}(K^*_\mathrm{ab}/K^*)$ is the Artin map for $K^*$.
\end{enumerate}
\end{proposition}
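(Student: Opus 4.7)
The plan is to deduce this statement from Shimura's main theorem of complex multiplication for abelian varieties of CM-type $(K,\{\varphi_i\}_{i=1}^g)$, combined with the canonical-model formalism and the homomorphism $\tau_\mathcal{K}$ from Proposition \ref{tau}. In fact this is essentially a repackaging, in our notation, of \cite[Theorem 26.8 and its corollaries]{Shimura}; my task would be to verify that all the normalizations here match the hypotheses of that theorem.

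First I would attach to the CM-point $\mathbf{w}=(\varphi_1(\xi),\ldots,\varphi_g(\xi))\in\mathbb{H}^g$ the complex torus $A_{\mathbf{w}}=\mathbb{C}^g/\Lambda_{\mathbf{w}}$, where $\Lambda_{\mathbf{w}}$ is the image of the fractional ideal $\mathfrak{c}^{-1}=[\xi_1,\xi_2]_F$ under the embedding $K\hookrightarrow\mathbb{C}^g$, $z\mapsto(\varphi_1(z),\ldots,\varphi_g(z))$. Because the basis $\{\xi,1\}$ of $K$ over $F$ is chosen precisely so that $h_\xi$ is the left regular representation, multiplication by an element $\nu\in K$ on $\Lambda_{\mathbf{w}}$ corresponds, in the $\mathcal{O}_F$-basis $\{\xi_1,\xi_2\}$, to the matrix $h_\xi(\nu)\in M_2(F)$; adelically, multiplication by an idele corresponds to $h_\xi$ applied to that idele in $M_2(F_{\mathbb{A}})$. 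Thus $h_\xi\circ g:(K^*)^\times_{\mathbb{A}}\to G_{\mathbb{A}}$ is exactly the map converting the reflex-norm action on the CM-type into an element of $G_{\mathbb{A}}$ acting on the modular function field $\mathcal{K}$ via $\tau_\mathcal{K}$.

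For part (ii), I would apply Shimura's main theorem: the automorphism $[s,K^*]\in\mathrm{Gal}(K^*_{\mathrm{ab}}/K^*)$, transported to the abelian-variety side, acts on the lattice $\Lambda_{\mathbf{w}}$ by multiplication by $g(s)^{-1}=g(s^{-1})$; therefore it sends the point $\varphi_S(\mathbf{w})$ on the canonical model $V_S$ to the point determined by the matrix $h_\xi(g(s^{-1}))$ applied to $\mathbf{w}$. Translating back via $\tau_\mathcal{K}$ and Proposition \ref{tau}(i) yields the formula
\begin{equation*}
f(\mathbf{w})^{[s,K^*]}=f^{\tau_\mathcal{K}(h_\xi(g(s^{-1})))}(\mathbf{w}),
\end{equation*}
provided $f$ (and hence $f^{\tau_\mathcal{K}(h_\xi(g(s^{-1})))}$) is finite at $\mathbf{w}$. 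Part (i) then follows from (ii): the image of $[\,\cdot\,,K^*]$ restricted to ideles in a suitable open subgroup (those for which $h_\xi(g(s^{-1}))$ lies in the stabilizer of $f$) fixes $f(\mathbf{w})$, so $f(\mathbf{w})$ is algebraic over $K^*$ with Galois orbit inside $K^*_{\mathrm{ab}}$; alternatively, the $k_S$-rationality of $V_S$ together with $\varphi_S(\mathbf{w})\in\overline{\mathbb{Q}}$ already places $f(\mathbf{w})$ in $\overline{\mathbb{Q}}$, and (ii) confirms abelianness over $K^*$.

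The main obstacle, and indeed the only substantive issue, is the bookkeeping of conventions: one must check that our CM-type normalization $\varphi_1=\mathrm{id}_K$ with $\varphi_i(\omega_K)\in\mathbb{H}$, our choice of basis $\{\xi_1,\xi_2\}$ making $\varphi_i(\xi)\in\mathbb{H}$ (Lemma \ref{x1x2}), our reflex norm $g$, and the Artin symbol $[s,K^*]$ are all compatible with Shimura's sign and ordering conventions so that the idele $g(s^{-1})$—and not $g(s)$—appears, and that the regular representation $h_\xi$ relative to $\{\xi,1\}$ (rather than $\{1,\xi\}$) produces the correct element of $G_{\mathbb{A}}$. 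Granted this matching, which is a routine but delicate comparison with \cite[Theorem 26.8]{Shimura}, the proposition is immediate.
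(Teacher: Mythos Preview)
Your proposal is correct and aligns with the paper's approach: the paper does not give an independent proof but simply cites \cite[Proposition 24.13 and Theorem 26.8 (4)]{Shimura}, exactly the results you invoke. Your sketch of how the statement unpacks from Shimura's main theorem---via the CM abelian variety attached to $\mathbf{w}$, the reflex norm, and the regular representation $h_\xi$---is more detailed than what the paper provides, but the underlying route is the same.
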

\begin{proof}
See \cite[Proposition 24.13 and Theorem 26.8 (4)]{Shimura}.
\end{proof}

\section {Generation of class fields over the reflex field}

Finally, we shall prove our second main theorem
which shows that
the singular values of Hilbert modular functions in $\mathcal{A}_0(\Gamma_{F,\,1}^+(N),\,\mathbb{Q})$
generate the subfield of $K^*_{(N)}$ corresponding to
$\mathrm{Ker}(\mathfrak{g}_N)$ over $K^*$.

\begin{lemma}\label{DA+}
If $s\in(K^*)_\mathbb{A}^\times$, then $h_\xi(g(s^{-1}))\in D_{\mathbb{A}+}$.
\end{lemma}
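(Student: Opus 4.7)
The plan is to verify two separate conditions: that the determinant of $h_\xi(g(s^{-1}))$ lies in the image of $\mathbb{Q}_\mathbb{A}^\times \hookrightarrow F_\mathbb{A}^\times$ (placing $h_\xi(g(s^{-1}))$ inside $D_\mathbb{A}$), and that this determinant is positive at every archimedean place of $F$ (placing it inside $G_{\mathbb{A}+}$).

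First I would observe that the regular representation $h_\xi$, originally defined on $K$, extends place-by-place to an $F_\mathbb{A}$-algebra homomorphism $h_\xi : K_\mathbb{A} \to M_2(F_\mathbb{A})$ sending $K_\mathbb{A}^\times$ into $G_\mathbb{A}=\mathrm{GL}_2(F_\mathbb{A})$. Since $g(s^{-1}) \in K_\mathbb{A}^\times$, the matrix $h_\xi(g(s^{-1}))$ automatically lies in $G_\mathbb{A}$. A direct computation (or the standard fact about regular representations) gives
\[
\det h_\xi(\nu) = N_{K/F}(\nu) \qquad (\nu \in K_\mathbb{A}),
\]
so in particular $\det h_\xi(g(s^{-1})) = N_{K/F}(g(s^{-1}))$.

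The main technical input is the reflex norm identity
\[
N_{K/F}(g(s)) = N_{K^*/\mathbb{Q}}(s) \qquad (s \in (K^*)_\mathbb{A}^\times),
\]
which I would first prove on $(K^*)^\times$ and then extend by continuity. Since $[K:F]=2$ with nontrivial Galois automorphism the complex conjugation $\rho$, we have $N_{K/F}(z) = z\,\rho(z)$ for any $z \in K$. Applying this to $z = g(s) = \prod_{j=1}^n \psi_j(s)$, and using that $\{\psi_j\}_{j=1}^n \cup \{\rho\psi_j\}_{j=1}^n$ is exactly the full set of embeddings of $K^*$ into $\mathbb{C}$, we obtain
\[
N_{K/F}(g(s)) = g(s)\,\rho(g(s)) = \prod_{j=1}^n \psi_j(s)\,\prod_{j=1}^n (\rho\psi_j)(s) = N_{K^*/\mathbb{Q}}(s).
\]
Consequently $\det h_\xi(g(s^{-1})) = N_{K^*/\mathbb{Q}}(s^{-1})$, which is a $\mathbb{Q}$-idele viewed in $F_\mathbb{A}^\times$ through the natural embedding, proving $h_\xi(g(s^{-1})) \in D_\mathbb{A}$.

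Finally, for archimedean positivity, I would use that $K^*$ is a CM-field and hence totally imaginary, so every archimedean place of $K^*$ is complex. At the archimedean place of $\mathbb{Q}$, the local factor of $N_{K^*/\mathbb{Q}}(s^{-1})$ is therefore a product of terms $|s_\tau^{-1}|^2 > 0$, one per complex place of $K^*$. Since $F$ is totally real, this positive real number embeds as the same positive real into each archimedean component of $F_\mathbb{A}^\times$, giving $\det h_\xi(g(s^{-1}))_v > 0$ for every $v \in \mathbf{a}$. Combined with the previous paragraph, this yields $h_\xi(g(s^{-1})) \in D_\mathbb{A} \cap G_{\mathbb{A}+} = D_{\mathbb{A}+}$. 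The only nontrivial step is the reflex norm identity $N_{K/F}\circ g = N_{K^*/\mathbb{Q}}$: it is classical in Shimura's framework, but one must track carefully how $\rho$ pairs the embeddings $\psi_j$ with $\rho\psi_j$ to make the product over the CM-type extend to the full norm of $K^*$.
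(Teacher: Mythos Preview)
Your proof is correct. The paper does not actually argue the lemma but simply refers the reader to \cite[p.~172]{Shimura}; you have supplied the argument the paper omits. Your route---identifying $\det h_\xi(g(s^{-1}))$ with $N_{K/F}(g(s^{-1}))$, invoking the reflex norm identity $N_{K/F}\circ g = N_{K^*/\mathbb{Q}}$ to land in $\mathbb{Q}_\mathbb{A}^\times$, and then using that $K^*$ is totally imaginary to force positivity at the unique archimedean place of $\mathbb{Q}$---is exactly the standard computation in Shimura's framework and is essentially what appears at the cited reference. The one point worth stating a bit more carefully is the passage from the algebraic identity on $(K^*)^\times$ to ideles: rather than appealing to continuity, it is cleaner to note that both $N_{K/F}\circ g$ and $N_{K^*/\mathbb{Q}}$ are defined place-by-place and the identity holds at each place by the same embedding count.
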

\begin{proof}
See \cite[p. 172]{Shimura}.
\end{proof}

\begin{proposition}\label{transform}
Let $f\in\mathcal{A}_0(\Gamma_{F,\,1}^+(N),\,\mathbb{Q})$ and $C\in\mathcal{C}(N\mathcal{O}_K)$.
Furthermore, let $\mathfrak{a}$ be a nontrivial ideal of $\mathcal{O}_{K^*}$ relatively prime to $N\mathcal{O}_{K^*}$,
and let $C'=\mathfrak{g}_N([\mathfrak{a}])$ in $\mathcal{C}(N\mathcal{O}_K)$.
If $f(C)$ is finite, then
it lies in $K^*_{(N)}$ and satisfies
\begin{equation*}
f(C)^{\sigma_N([\mathfrak{a}])}=f(CC'),
\end{equation*}
where $\sigma_N:\mathcal{C}(N\mathcal{O}_{K^*})\rightarrow
\mathrm{Gal}(K^*_{(N)}/K^*)$ is the Artin map.
\end{proposition}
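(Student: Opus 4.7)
The strategy is to apply Shimura's reciprocity law (Proposition \ref{reciprocity}) to the function $f\circ A_1$ evaluated at the CM-point $\mathbf{z}=(\varphi_1(\xi),\ldots,\varphi_g(\xi))$, where $\mathfrak{c}\in C$, $\mathfrak{c}^{-1}=[\xi_1,\xi_2]_F$, $\xi=\xi_1/\xi_2$, and $A_1$ are the data furnished by Definition \ref{rayclassinvariant}. Since $A_1\in\mathrm{SL}_2(\mathcal{O}_F)\subseteq G$, the composite $f\circ A_1$ is again a Hilbert modular function (for a conjugate arithmetic subgroup) with Fourier coefficients in $\mathbb{Q}_{\mathrm{ab}}$, so $f\circ A_1\in\mathcal{K}$. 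Proposition \ref{reciprocity}(i) then immediately yields $f(C)=(f\circ A_1)(\mathbf{z})\in K^*_{\mathrm{ab}}$.

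Next I would establish the transformation rule. Choose an idele $s\in(K^*)^\times_{\mathbb{A}}$ whose archimedean components are trivial and whose nonarchimedean components generate $\mathfrak{a}$, so that $[s,K^*]$ restricts to $\sigma_N([\mathfrak{a}])$ on $K^*_{(N)}$. Proposition \ref{reciprocity}(ii) gives
\[
f(C)^{[s,K^*]} = (f\circ A_1)^{\tau_{\mathcal{K}}(h_\xi(g(s^{-1})))}(\mathbf{z}),
\]
and by Lemma \ref{DA+} the matrix $\eta:=h_\xi(g(s^{-1}))$ lies in $D_{\mathbb{A}+}\subseteq\mathcal{G}_+$. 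To match this with $f(CC')$ I would take $\mathfrak{c}''=\mathfrak{g}(\mathfrak{a})\mathfrak{c}\in CC'$ (still prime to $N\mathcal{O}_K$) as representative of $CC'$; then $\mathfrak{c}''^{-1}=g(s^{-1})\mathfrak{c}^{-1}$ in the idelic sense. The regular-representation identity $g(s^{-1})\begin{bmatrix}\xi_1\\ \xi_2\end{bmatrix}=\eta\begin{bmatrix}\xi_1\\ \xi_2\end{bmatrix}$ shows that the rows of $\eta$, read place by place, produce an $\mathcal{O}_F$-basis $(\xi_1'',\xi_2'')$ of $\mathfrak{c}''^{-1}$ with $\xi'':=\xi_1''/\xi_2''$ serving as the basis datum for $f(CC')$ in Definition \ref{rayclassinvariant}.

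The key step is to decompose $\eta=\beta\gamma\alpha$ with $\beta\in F^\times(S\cap D_\mathbb{A})$, $\gamma\in G$, and $\alpha\in G_{\mathbf{a}+}$, following $\mathcal{G}_+=D_\mathbb{A}GG_{\mathbf{a}+}$. By Proposition \ref{tau}(ii) the $\beta$-factor acts trivially on $\mathcal{K}_S=\mathcal{A}_0(\Gamma_{F,1}^+(N),\mathbb{Q})$ (Proposition \ref{GkA}(ii)), and by Proposition \ref{tau}(i) the remaining factors act by composition. Evaluating $(f\circ A_1)\circ\gamma\alpha$ at $\mathbf{z}$ and comparing with the evaluation prescription for $f(CC')$ (using a matrix $A_1''$ built for the datum $(\mathfrak{c}'',\xi'')$ via Lemma \ref{strong}) should produce the equality $f(C)^{\sigma_N([\mathfrak{a}])}=f(CC')$. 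Finally, the membership $f(C)\in K^*_{(N)}$ follows by specialization: taking $\mathfrak{a}=d\mathcal{O}_{K^*}$ with $d\equiv^*1\Mod{N\mathcal{O}_{K^*}}$ gives, via Lemma \ref{gd1}, $C'=\mathfrak{g}_N([\mathfrak{a}])=1$ in $\mathcal{C}(N\mathcal{O}_K)$, so the transformation formula reads $f(C)^{[s,K^*]}=f(C)$ for every $s$ in the kernel of the Artin map $\mathcal{C}(N\mathcal{O}_{K^*})\to\mathrm{Gal}(K^*_{(N)}/K^*)$, i.e., for every element of $\mathrm{Gal}(K^*_{\mathrm{ab}}/K^*_{(N)})$.

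The principal obstacle is the bookkeeping in the decomposition $\eta=\beta\gamma\alpha$: one must arrange representatives so that $\beta$ actually satisfies the congruence condition defining $S_\mathbf{h}$ in (\ref{Sh}) at every finite place, which requires translating the idelic $\equiv^*1$ information on $s$ (via $g$ and $h_\xi$) into the matrix congruence $\begin{bmatrix}\mu & \ast\\ 0 & \ast\end{bmatrix}\Mod{N\mathcal{O}_{F,v}}$, and then verifying that the residual $\gamma$-component, acting by linear fractional transformation on $(\varphi_i(\xi))_i$, reproduces $(\varphi_i(\xi''))_i$. This is analogous in spirit to the congruence computations in the proofs of Proposition \ref{qqww} and Lemma \ref{f(w)}, but must now be carried out simultaneously at all nonarchimedean places and is where the hypotheses on $\mathfrak{a}$ being prime to $N\mathcal{O}_{K^*}$ and $\mathcal{O}_F$ being a PID are consumed.
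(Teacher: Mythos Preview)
Your overall strategy---apply Shimura's reciprocity law to $f\circ A_1$ at the CM-point, choose an idele $s$ representing $\mathfrak{a}$, and reduce to showing that $\tau_{\mathcal{K}}(A_1\,h_\xi(g(s^{-1})))$ agrees on $\mathcal{K}_S$ with the action defining $f(CC')$---is exactly the paper's approach, and your argument for $f(C)\in K^*_{(N)}$ via the specialization $\mathfrak{a}=d\mathcal{O}_{K^*}$ with $d\equiv^*1$ is also the same.

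The gap is in the step you yourself flag as the obstacle. You write that the rows of the adelic matrix $\eta=h_\xi(g(s^{-1}))$ ``produce an $\mathcal{O}_F$-basis $(\xi_1'',\xi_2'')$ of $\mathfrak{c}''^{-1}$''. They do not: $\eta$ has components in $\mathrm{GL}_2(F_v)$ varying with $v$, so $\eta\begin{bmatrix}\xi_1\\\xi_2\end{bmatrix}$ gives only a compatible family of local bases, not a pair of global elements of $K$ on which Definition~\ref{rayclassinvariant} can be evaluated. Consequently your proposed abstract decomposition $\eta=\beta\gamma\alpha$ with $\gamma\in G$ is not anchored to any concrete $\xi''$, and the verification that $\beta\in S$ remains genuinely open.

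The paper resolves this by reversing the order: first choose a \emph{global} basis $(\xi_1'',\xi_2'')$ of $(\mathfrak{c}\,\mathfrak{g}(\mathfrak{a}))^{-1}$ via Lemma~\ref{x1x2}, and use the lattice inclusion $\mathfrak{c}^{-1}\subseteq(\mathfrak{c}\,\mathfrak{g}(\mathfrak{a}))^{-1}$ to produce a single global matrix $B\in M_2(\mathcal{O}_F)$ with $\begin{bmatrix}\xi_1\\\xi_2\end{bmatrix}=B\begin{bmatrix}\xi_1''\\\xi_2''\end{bmatrix}$ and $\det(B)\gg0$. Comparing the two local bases of $(\mathfrak{c}\,\mathfrak{g}(\mathfrak{a}))_p^{-1}$ then yields $\eta=\gamma B^{-1}$ with $\gamma=(\gamma_p)_p\in\prod_p\mathrm{GL}_2(\mathcal{O}_{F,p})$. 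The decisive normalization is to take $s_p=1$ for every $p\mid N$ (possible since $\mathfrak{a}$ is prime to $N$); this forces $\gamma_p=B$ at those primes, so $A_1\gamma_p\equiv A_1B\pmod{M_2(N\mathcal{O}_{F,p})}$, which is exactly the congruence placing $(A_1\gamma)(A_1B)_1^{-1}$ in $S\cap\mathcal{G}$. The global $B^{-1}$ then acts by composition, sending $(\varphi_i(\xi))$ to $(\varphi_i(\xi''))$, and the identity $(AB)_1\equiv(A_1B)_1\pmod{M_2(N\mathcal{O}_F)}$ closes the loop with Definition~\ref{rayclassinvariant} for $CC'$. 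In short, the missing idea in your sketch is the global $B$ furnished by the inclusion of fractional ideals; once you have it, the ``bookkeeping'' you anticipate becomes the one-line observation that $s_p=1$ for $p\mid N$.
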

\begin{proof}
Note first that $f(C)$ belongs to $K^*_\mathrm{ab}$ by Definition \ref{rayclassinvariant}, Propositions \ref{tau} (i) and  \ref{reciprocity} (i).
Let $\mathfrak{c}$ be an integral ideal in the ray class $C$,
and let $\xi_1$ and $\xi_2$ be elements of $K$ such that
$\mathfrak{c}^{-1}=[\xi_1,\,\xi_2]_F$
satisfying (\ref{rhx}) in Lemma \ref{x1x2}.
Also by Lemma \ref{x1x2} one can take elements $\xi_1''$ and $\xi_2''$ of $K$ satisfying
\begin{equation}\label{cgax''}
(\mathfrak{c}\mathfrak{g}(\mathfrak{a}))^{-1}=[\xi_1'',\,\xi_2'']_F
\end{equation}
and
\begin{equation*}
\varphi_i(\xi'')\in\mathbb{H}\quad\textrm{with}~\xi''=\frac{\xi_1''}{\xi_2''}\quad(i=1,\,2,\,\ldots,\,g).
\end{equation*}
Since $\mathfrak{c}^{-1}\subseteq(\mathfrak{c}\mathfrak{g}(\mathfrak{a}))^{-1}$, we have
\begin{equation}\label{xBx''}
\begin{bmatrix}\xi_1\\\xi_2\end{bmatrix}=B\begin{bmatrix}\xi_1''\\\xi_2''\end{bmatrix}\quad
\textrm{for some}~B\in M_2(\mathcal{O}_F).
\end{equation}
Here, one can show by (\ref{rhx}) and
(\ref{xBx''})
that $\det(B)\gg0$ in a similar way to Lemma \ref{det>>0} (i).
It then follows from (\ref{wAx}) that
\begin{equation*}
\begin{bmatrix}
\omega_K\\1
\end{bmatrix}=AB\begin{bmatrix}
\xi_1''\\\xi_2''
\end{bmatrix}\quad\textrm{with}~A\in M_2(\mathcal{O}_F).
\end{equation*}
Thus, if we let $(AB)_1$ be a matrix in $\mathrm{SL}_2(\mathcal{O}_F)$ such that
\begin{equation*}
AB\equiv\begin{bmatrix}1&0\\0&\det(AB)\end{bmatrix}
(AB)_1\Mod{M_2(N\mathcal{O}_F)},
\end{equation*}
then we get
\begin{equation}\label{fCC'}
f(CC')=f^{\tau_\mathcal{K}((AB)_1)}(\varphi_1(\xi''),\,\varphi_2(\xi''),\,
\ldots,\,\varphi_g(\xi''))
\end{equation}
by Definition \ref{rayclassinvariant} and Proposition \ref{tau} (i).
Moreover, if we let $(A_1B)_1$ be a matrix in $\mathrm{SL}_2(\mathcal{O}_F)$
which satisfies
\begin{equation*}
A_1B\equiv\begin{bmatrix}1&0\\
0&\det(A_1B)\end{bmatrix}(A_1B)_1
\equiv\begin{bmatrix}1&0\\
0&\det(B)\end{bmatrix}(A_1B)_1
\Mod{M_2(N\mathcal{O}_F)},
\end{equation*}
then we derive that
\begin{equation*}
\begin{bmatrix}1&0\\0&\det(AB)\end{bmatrix}(AB)_1\equiv AB\equiv
\begin{bmatrix}1&0\\0&\det(A)\end{bmatrix}A_1B\equiv
\begin{bmatrix}1&0\\0&\det(A)\end{bmatrix}\begin{bmatrix}1&0\\0&\det(B)\end{bmatrix}(A_1B)_1
\Mod{M_2(N\mathcal{O}_F)},
\end{equation*}
from which we conclude
\begin{equation}\label{1relation}
(AB)_1\equiv(A_1B)_1\Mod{M_2(N\mathcal{O}_F)}.
\end{equation}
\par
For $\displaystyle\widehat{\mathbb{Z}}=\hspace{-0.2cm}\prod_{p\,:\,\textrm{primes}}\hspace{-0.2cm}\mathbb{Z}_p$, denote $\widehat{K^*}^\times=(K^*\otimes\widehat{\mathbb{Z}})^\times$
which is isomorphic to the finite part of $(K^*)_\mathbb{A}^\times$.
Let $s=(s_p)_p$ be an idele in $\widehat{K^*}^\times$ such that
\begin{equation}\label{sp}
\left\{\begin{array}{ll}
s_p=1&\textrm{if}~p\,|\,N,\\
s_p\mathcal{O}_{K^*,\,p}=\mathfrak{a}_p&\textrm{if}~p\nmid N.
\end{array}\right.
\end{equation}
Since $\mathfrak{a}$ is relatively prime to $N\mathcal{O}_{K^*}$, we attain by (\ref{sp}) that
\begin{equation}\label{sOa}
s_p^{-1}\mathcal{O}_{K^*,\,p}=\mathfrak{a}_p^{-1}\quad\textrm{for every prime}~p.
\end{equation}
We then see by the definition of the map $h_{\xi,\,p}$ that
\begin{equation*}
h_{\xi,\,p}(g(s_p^{-1}))\begin{bmatrix}\xi_1\\\xi_2\end{bmatrix}=
\xi_2h_{\xi,\,p}(g(s_p^{-1}))\begin{bmatrix}\xi\\1\end{bmatrix}=
\xi_2g(s_p^{-1})\begin{bmatrix}\xi\\1\end{bmatrix}=
g(s_p^{-1})\begin{bmatrix}\xi_1\\\xi_2\end{bmatrix},
\end{equation*}
which implies by the fact $\mathfrak{c}^{-1}=[\xi_1,\,\xi_2]_F$ and (\ref{sOa}) that $h_{\xi,\,p}(g(s_p^{-1}))\begin{bmatrix}\xi_1\\\xi_2\end{bmatrix}$
is a $\mathcal{O}_{F,\,p}$-basis for $(\mathfrak{c}\mathfrak{g}(\mathfrak{a}))_p^{-1}$ for every prime $p$.
On the other hand, we observe by (\ref{cgax''}) and (\ref{xBx''}) that
$B^{-1}\begin{bmatrix}
\xi_1\\\xi_2
\end{bmatrix}$ is also a $\mathcal{O}_{F,\,p}$-basis for $(\mathfrak{c}\mathfrak{g}(\mathfrak{a}))_p^{-1}$. Therefore we obtain
\begin{equation}\label{hgsgB}
h_{\xi,\,p}(g(s_p^{-1}))=\gamma_p B^{-1}
\quad\textrm{for some}~\gamma_p\in\mathrm{GL}_2(\mathcal{O}_{F,\,p}),
\end{equation}
and so
\begin{equation}\label{hgsgBg}
h_\xi(g(s^{-1}))=\gamma B^{-1}\quad\textrm{where}~
\gamma=(\gamma_p)_p\in\prod_p\mathrm{GL}_2(\mathcal{O}_{F,\,p}).
\end{equation}
Here, $\gamma B^{-1}$ belongs to $D_{\mathbb{A}+}$ by Lemma \ref{DA+}, and hence
\begin{equation}\label{inG}
\gamma\in\mathcal{G}.
\end{equation}
Note that for every prime $p$ dividing $N$, we get by the condition $s_p=1$ in (\ref{sp}) and (\ref{hgsgB}) that
$\gamma_p B^{-1}=I_2$.
We then deduce that for every prime $p$
\begin{equation*}
A_1\gamma_p\equiv A_1B\Mod{M_2(N\mathcal{O}_{F,\,p})}
\equiv\begin{bmatrix}1&0\\0&\det(B)\end{bmatrix}(A_1B)_1\Mod{M_2(N\mathcal{O}_{F,\,p})},
\end{equation*}
which shows by (\ref{Sh}) and (\ref{inG})
\begin{equation}\label{inS}
(A_1\gamma)(A_1B)_1^{-1}\in S\cap\mathcal{G}.
\end{equation}
Now, we derive that
\begin{eqnarray*}
f(C)^{[s,\,K^*]}&=&f^{\tau_\mathcal{K}(A_1)}(\varphi_1(\xi),\,\varphi_2(\xi),\,\ldots,\,\varphi_g(\xi))^{[s,\,K^*]}\quad\textrm{by
Definition \ref{rayclassinvariant} and Proposition \ref{tau} (i)}\\
&=&f^{\tau_\mathcal{K}(A_1)\tau_\mathcal{K}(h_\xi(g(s^{-1})))}(\varphi_1(\xi),\,\varphi_2(\xi),\,\ldots,\,\varphi_g(\xi))\quad
\textrm{by Proposition \ref{reciprocity}}\\
&=&f^{\tau_\mathcal{K}(A_1)\tau_\mathcal{K}(\gamma B^{-1})}(\varphi_1(\xi),\,\varphi_2(\xi),\,\ldots,\,\varphi_g(\xi))
\quad\textrm{by (\ref{hgsgBg})}\\
&=&(f^{\tau_\mathcal{K}(A_1)\tau_\mathcal{K}(\gamma)}\circ B^{-1})(\varphi_1(\xi),\,\varphi_2(\xi),\,\ldots,\,\varphi_g(\xi))\quad
\textrm{by Proposition \ref{tau} (i)}\\
&=&f^{\tau_\mathcal{K}(A_1\gamma)}(\varphi_1(B^{-1}\xi),\,\varphi_2(B^{-1}\xi),\,\ldots,\,\varphi_g(B^{-1}\xi))\\
&=&f^{\tau_\mathcal{K}(A_1\gamma)}(\varphi_1(\xi''),\,\varphi_2(\xi''),\,\ldots,\,\varphi_g(\xi''))\quad
\textrm{by (\ref{xBx''})}\\
&=&f^{\tau_\mathcal{K}((A_1B)_1)}(\varphi_1(\xi''),\,\varphi_2(\xi''),\,\ldots,\,\varphi_g(\xi''))
\quad\textrm{by (\ref{inS}), Propositions \ref{tau} (ii) and \ref{GkA}}\\
&=&f^{\tau_\mathcal{K}((AB)_1)}(\varphi_1(\xi''),\,\varphi_2(\xi''),\,\ldots,\,\varphi_g(\xi''))\\
&&\hspace{4.5cm}\textrm{by (\ref{1relation})
and the fact that $f$ is modular for $\Gamma_{F,\,1}^+(N)$}\\
&=&f(CC')\quad\textrm{by (\ref{fCC'})}.
\end{eqnarray*}
\par
By Proposition \ref{reciprocity} (i) we may take a sufficiently large positive integer $M$ so that
$f(C)\in K^*_{(M)}$. Suppose that $\sigma_{M}([\mathfrak{a}])$ leaves
$K^*_{(N)}$ fixed elementwise. Since
\begin{equation*}
\mathrm{Gal}(K^*_{(M)}/K^*_{(N)})\simeq P_{K^*,\,1}(N\mathcal{O}_{K^*})
/P_{K^*,\,1}(M\mathcal{O}_{K^*}),
\end{equation*}
we have
\begin{equation*}
\mathfrak{a}=d\mathcal{O}_{K^*}\quad\textrm{for some}~d\in\mathcal{O}_{K^*}~
\textrm{such that}~d\equiv1\Mod{N\mathcal{O}_{K^*}}.
\end{equation*}
Thus we achieve by (\ref{gdgd}) and Lemma \ref{gd1} that
\begin{equation*}
C'=\mathfrak{g}_N([\mathfrak{a}])=[\mathfrak{g}(d\mathcal{O}_{K^*})]
=[g(d)\mathcal{O}_K]=[\mathcal{O}_K]\quad\textrm{in}~\mathcal{C}(N\mathcal{O}_K),
\end{equation*}
and so
\begin{equation*}
f(C)^{\sigma_{M}([\mathfrak{a}])}=f(C[\mathcal{O}_K])=f(C).
\end{equation*}
This yields by the Galois theory that $f(C)$ lies in $K^*_{(N)}$.
\end{proof}

\begin{remark}
For $\begin{bmatrix}r_1&r_2\end{bmatrix}\in M_{1,\,2}(\mathbb{Q})
\setminus M_{1,\,2}(\mathbb{Z})$, the Siegel function
$g_{\left[\begin{smallmatrix}r_1&r_2\end{smallmatrix}\right]}(\tau)$
is given by the infinite product expansion
\begin{eqnarray*}
g_{\left[\begin{smallmatrix}r_1&r_2\end{smallmatrix}\right]}(\tau)
&=&-e^{\pi\mathrm{i}r_2(r_1-1)}
q^{\frac{1}{2}(r_1^2-r_1+\frac{1}{6})}
(1-q^{r_1}e^{2\pi\mathrm{i}r_2})\\
&&\times
\prod_{n=1}^\infty(1-q^{n+r_1}e^{2\pi\mathrm{i}r_2})
(1-q^{n-r_1}e^{-2\pi\mathrm{i}r_2})\quad(q=e^{2\pi\mathrm{i}\tau},~\tau\in\mathbb{H}).
\end{eqnarray*}
Suppose that $K$ is an imaginary quadratic field.
Then $K^*$ is nothing but $K$ itself.
Let $\mathfrak{n}$ be
a proper nontrivial ideal of $\mathcal{O}_K$ in which
$N$ is the least positive integer, and let $C\in\mathcal{C}(\mathfrak{n})$. Take any
ideal $\mathfrak{c}$ of $\mathcal{O}_K$ belonging to the ray class $C$,
and take $\xi_1,\xi_2\in K^\times$ such that
\begin{equation*}
\mathfrak{n}\mathfrak{c}^{-1}=[\xi_1,\,\xi_2]\quad
\textrm{and}\quad\xi=\frac{\xi_1}{\xi_2}\in\mathbb{H}.
\end{equation*}
Since $N$ is contained in $\mathfrak{n}\mathfrak{c}^{-1}$, it can be written as
\begin{equation*}
N=a\xi_1+b\xi_2\quad\textrm{for some}~a,\,b\in\mathbb{Z}.
\end{equation*}
Then, the Siegel-Ramachandra invariant $g(C)$ is defined by
\begin{equation*}
g(C)=g_{\left[\begin{smallmatrix}\frac{a}{N}&\frac{b}{N}\end{smallmatrix}\right]}(\xi)^{12N}
\end{equation*}
which is independent of the choice of $\mathfrak{c}$, $\xi_1$ and $\xi_2$.
This invariant lies in the ray class field $K_\mathfrak{n}$ and satisfies
\begin{equation*}
g(C)^{\sigma_\mathfrak{n}(C')}=g(CC')\quad(C'\in\mathcal{C}(\mathfrak{n})),
\end{equation*}
where $\sigma_\mathfrak{n}:\mathcal{C}(\mathfrak{n})\rightarrow\mathrm{Gal}(K_\mathfrak{n}/K)$ is the Artin map.
For the detailed arithmetic and algebraic properties of Siegel functions
and Siegel-Ramachandra invariants, one can refer to \cite{K-L}
or \cite{Ramachandra}, \cite{Siegel}.
\end{remark}

\begin{lemma}\label{fCfC'}
If $C,\,C'\in\mathcal{C}(N\mathcal{O}_K)$ such that $C\neq C'$,
then there is $f\in\mathcal{A}_0(\Gamma_{F,\,1}^+(N),\,\mathbb{Q})$ such that
$f(C)$ is finite and
$f(C)\neq f(C')$.
\end{lemma}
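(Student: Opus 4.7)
The plan is to argue by contradiction, using Theorem \ref{main1} and Lemma \ref{+representative} to replace ray classes by form classes, and then Lemma \ref{ww'} to translate the non-separation hypothesis into an equality up to the action of $\Gamma_{F,\,1}^+(N)$.

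First, I would choose representative forms $Q,\,Q'\in\mathcal{Q}_F^+(N,\,d_K)$ with
$C=\phi_{K,\,N}([Q])$ and $C'=\phi_{K,\,N}([Q'])$, which is possible by Theorem \ref{main1} together with Lemma \ref{+representative}. Set the CM-points
\begin{equation*}
\mathbf{w}=(\varphi_1(-\overline{\omega_Q}),\,\ldots,\,\varphi_g(-\overline{\omega_Q})),\qquad
\mathbf{w}'=(\varphi_1(-\overline{\omega_{Q'}}),\,\ldots,\,\varphi_g(-\overline{\omega_{Q'}})),
\end{equation*}
which lie in $\mathbb{H}^g$ by the discussion following (\ref{rirh}) and are CM-points in the sense of \S7 because $-\overline{\omega_Q},\,-\overline{\omega_{Q'}}\in K$ and $\varphi_1=\mathrm{id}_K$. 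By Lemma \ref{f(w)}, for every $f\in\mathcal{A}_0(\Gamma_{F,\,1}^+(N),\,\mathbb{Q})$ one has $f(C)=f(\mathbf{w})$ and $f(C')=f(\mathbf{w}')$, and $f(C)$ is finite precisely when $f$ is finite at $\mathbf{w}$.

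Now suppose, for contradiction, that $f(C)=f(C')$ for every $f\in\mathcal{A}_0(\Gamma_{F,\,1}^+(N),\,\mathbb{Q})$ that is finite at $\mathbf{w}$. By Proposition \ref{GkA}(ii) with $S=S_\mathbf{h}G_{\mathbf{a}+}$ as in (\ref{Sh}), we have $\mathcal{A}_0(\Gamma_{F,\,1}^+(N),\,\mathbb{Q})=\mathcal{K}_S$, so the assumption becomes
\begin{equation*}
f(\mathbf{w})=f(\mathbf{w}')\qquad\textrm{for every}~f\in\mathcal{K}_S~\textrm{finite at}~\mathbf{w}.
\end{equation*}
Hence Lemma \ref{ww'} yields $\mathbf{w}=\gamma\mathbf{w}'$ for some $\gamma\in\Gamma_S=\Gamma_{F,\,1}^+(N)$ (using Proposition \ref{GkA}(i) again). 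Taking the first coordinate (where $\varphi_1=\mathrm{id}_K$) gives
\begin{equation*}
-\overline{\omega_Q}=\gamma(-\overline{\omega_{Q'}}).
\end{equation*}
Lemma \ref{+QQC} then implies $[Q]=[Q']$ in $\mathcal{C}_F(N,\,d_K)$, and applying $\phi_{K,\,N}$ and Theorem \ref{main1} forces $C=C'$, contrary to our hypothesis. Thus some $f\in\mathcal{A}_0(\Gamma_{F,\,1}^+(N),\,\mathbb{Q})$ must be finite at $\mathbf{w}$ and satisfy $f(C)\neq f(C')$.

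The one subtle point, and what I would expect to be the only real obstacle, is verifying cleanly that the inputs of Lemma \ref{ww'} are met: one must confirm that $\mathbf{w}$ qualifies as a CM-point in the Hilbert modular sense used there, and that the field $\mathcal{A}_0(\Gamma_{F,\,1}^+(N),\,\mathbb{Q})$ really coincides with $\mathcal{K}_S$ for the particular $S$ of (\ref{Sh}). Both of these are already established in \S7, so the contradiction argument goes through without further work.
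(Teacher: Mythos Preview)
Your proof is correct and follows essentially the same route as the paper's: argue by contradiction, pass to CM-points via Lemma~\ref{f(w)}, apply Lemma~\ref{ww'} (after identifying $\mathcal{A}_0(\Gamma_{F,\,1}^+(N),\,\mathbb{Q})$ with $\mathcal{K}_S$ via Proposition~\ref{GkA}), and deduce $[Q]=[Q']$. The only cosmetic difference is that the paper cites Lemma~\ref{qqcwgw} at the final step while you invoke the sharper Lemma~\ref{+QQC}; either works since the $\gamma$ produced already lies in $\Gamma_{F,\,1}^+(N)$.
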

\begin{proof}
By Theorem \ref{main1} and Lemma \ref{+representative}, we have
\begin{equation*}
C=\phi_{K,\,N}([Q])\quad\textrm{and}\quad C'=\phi_{K,\,N}([Q'])
\end{equation*}
for some $Q,\,Q'\in\mathcal{Q}_F^+(N,\,d_K)$
such that $[Q]\neq[Q']$ in $\mathcal{C}_F(N,\,d_K)$.
Suppose on the contrary that
\begin{equation*}
f(C)=f(C')\quad\textrm{for all}~f\in\mathcal{A}_0(\Gamma_{F,\,1}^+(N),\,\mathbb{Q})~
\textrm{such that $f(C)$ are finite}.
\end{equation*}
Then we get by Lemmas \ref{f(w)} and \ref{ww'} that
\begin{equation*}
(\varphi_1(-\overline{\omega_Q}),\,\varphi_2(-\overline{\omega_Q}),\,\ldots,\,
\varphi_g(-\overline{\omega_Q}))=
\gamma
(\varphi_1(-\overline{\omega_{Q'}}),\,\varphi_2(-\overline{\omega_{Q'}}),\,\ldots,\,\varphi_g(-\overline{\omega_{Q'}}))
\end{equation*}
for some $\gamma\in\Gamma_{F,\,1}^+(N)$.
Since $\varphi_1=\mathrm{id}_K$, we get
$-\overline{\omega_Q}=\gamma(-\overline{\omega_{Q'}})$, and hence $[Q]=[Q']$ in
$\mathcal{C}_F(N,\,d_K)$ by Lemma \ref{qqcwgw}. But, this contradicts $C\neq C'$.
Therefore we conclude that
there exists $f\in\mathcal{A}_0(\Gamma_{F,\,1}^+(N),\,\mathbb{Q})$ such that
$f(C)$ is finite and $f(C)\neq f(C')$.
\end{proof}

\begin{theorem}\label{main2}
The field
\begin{equation*}
L_C=K^*(f(C)~|~f\in\mathcal{A}_0(\Gamma_{F,\,1}^+(N),\,\mathbb{Q})~
\textrm{such that}~f(C)~\textrm{is finite})
\end{equation*}
is a subfield of the ray class field $K^*_{(N)}$ for which
\begin{equation*}
\mathrm{Gal}(L_C/K^*)\simeq\mathcal{C}(N\mathcal{O}_{K^*})/\mathrm{Ker}(\mathfrak{g}_N).
\end{equation*}
Moreover, it is independent of $C$.
\end{theorem}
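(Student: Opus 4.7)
The plan is to exploit Proposition \ref{transform} together with Lemma \ref{fCfC'} via the Galois correspondence for the abelian extension $K^*_{(N)}/K^*$. First, Proposition \ref{transform} directly asserts that each finite value $f(C)$ lies in $K^*_{(N)}$; hence $L_C$ is automatically a subfield of $K^*_{(N)}$, and it suffices to identify the subgroup $H_C=\mathrm{Gal}(K^*_{(N)}/L_C)$ pulled back via the Artin isomorphism $\sigma_N:\mathcal{C}(N\mathcal{O}_{K^*})\xrightarrow{\sim}\mathrm{Gal}(K^*_{(N)}/K^*)$.

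To compute this pullback, fix $[\mathfrak{a}]\in\mathcal{C}(N\mathcal{O}_{K^*})$ and set $C'=\mathfrak{g}_N([\mathfrak{a}])\in\mathcal{C}(N\mathcal{O}_K)$. By Proposition \ref{transform}, $\sigma_N([\mathfrak{a}])$ fixes $f(C)$ for every $f\in\mathcal{A}_0(\Gamma_{F,\,1}^+(N),\,\mathbb{Q})$ with $f(C)$ finite if and only if $f(CC')=f(C)$ for all such $f$. Invoking the contrapositive of Lemma \ref{fCfC'}, this is equivalent to $CC'=C$ in $\mathcal{C}(N\mathcal{O}_K)$, i.e. $[\mathfrak{a}]\in\mathrm{Ker}(\mathfrak{g}_N)$. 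The reverse implication, that every $[\mathfrak{a}]\in\mathrm{Ker}(\mathfrak{g}_N)$ gives an element of $H_C$, is immediate from Proposition \ref{transform} once we note $f(C\cdot[\mathcal{O}_K])=f(C)$. Consequently $H_C=\sigma_N(\mathrm{Ker}(\mathfrak{g}_N))$.

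Finally, the Galois correspondence yields
\begin{equation*}
\mathrm{Gal}(L_C/K^*)\simeq\mathrm{Gal}(K^*_{(N)}/K^*)/H_C\simeq\mathcal{C}(N\mathcal{O}_{K^*})/\mathrm{Ker}(\mathfrak{g}_N),
\end{equation*}
and since $H_C$ is manifestly independent of $C$, so is the fixed field $L_C$. The most delicate point is ensuring that the finiteness hypotheses align when splicing Proposition \ref{transform} into Lemma \ref{fCfC'}: whenever $f(C)$ is finite, the quantity $f(CC')=f(C)^{\sigma_N([\mathfrak{a}])}$ is the Galois transform of a finite algebraic number and is therefore itself finite, so the pair $(C,CC')$ falls squarely within the hypotheses of Lemma \ref{fCfC'} and no further bookkeeping is required. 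Aside from this, the argument is a clean packaging of the reciprocity law of Proposition \ref{transform} with the separation of points provided by Lemma \ref{fCfC'}.
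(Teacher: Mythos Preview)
Your proof is correct and follows essentially the same route as the paper's own argument: both identify $\mathrm{Gal}(K^*_{(N)}/L_C)$ with $\sigma_N(\mathrm{Ker}(\mathfrak{g}_N))$ by combining the transformation formula of Proposition~\ref{transform} with the separation-of-points statement of Lemma~\ref{fCfC'}, and then conclude via the Galois correspondence. Your explicit remark that the finiteness of $f(CC')$ is automatic (as a Galois conjugate of the finite value $f(C)$) is a helpful clarification that the paper leaves implicit.
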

\begin{proof}
Let $\sigma_N:\mathcal{C}(N\mathcal{O}_{K^*})\rightarrow
\mathrm{Gal}(K^*_{(N)}/K^*)$ be the Artin map.
For a nontrivial ideal $\mathfrak{a}$ of $\mathcal{O}_{K^*}$ relatively prime to $N\mathcal{O}_{K^*}$,
we deduce that
\begin{eqnarray*}
&&\textrm{$\sigma_N([\mathfrak{a}])$ leaves $L_C$ fixed elementwise}\\
&\Longleftrightarrow&
f(C)^{\sigma_N([\mathfrak{a}])}=f(C)\quad
\textrm{for every $f\in\mathcal{A}_0(\Gamma_{F,\,1}^+(N),\,\mathbb{Q})$
such that $f(C)$ is finite}\\
&\Longleftrightarrow&
f(C\mathfrak{g}_N([\mathfrak{a}]))=f(C)\quad
\textrm{for each $f\in\mathcal{A}_0(\Gamma_{F,\,1}^+(N),\,\mathbb{Q})$
such that $f(C)$ is finite}\\
&&\hspace{9.7cm}\textrm{by Proposition \ref{transform}}\\
&\Longleftrightarrow&C\mathfrak{g}_N([\mathfrak{a}])=C\quad\textrm{by Lemma \ref{fCfC'}}\\
&\Longleftrightarrow&\mathfrak{g}_N([\mathfrak{a}])=[\mathcal{O}_K]\\
&\Longleftrightarrow&[\mathfrak{a}]\in\mathrm{Ker}(\mathfrak{g}_N).
\end{eqnarray*}
Therefore we achieve by the Galois theory that $L_C$ is independent of $C$ and
\begin{equation*}
\mathrm{Gal}(K^*_{(N)}/L_C)\simeq\mathrm{Ker}(\mathfrak{g}_N).
\end{equation*}
Moreover, since $K^*_{(N)}/K^*$ is an abelian extension, we obtain
\begin{equation*}
\mathrm{Gal}(L_C/K^*)\simeq\mathcal{C}(N\mathcal{O}_{K^*})/\mathrm{Ker}(\mathfrak{g}_N).
\end{equation*}
\end{proof}

\begin{remark}
In particular, if $K$ is an imaginary quadratic field, then $K^*=K$ and
$\mathfrak{g}_N$ is the identity map on $\mathcal{C}(N\mathcal{O}_K)$.
And, Assumption \ref{narrowclassnumberone} is automatically satisfied
because $F=\mathbb{Q}$. Let
$d_K$ ($<0$) be the discriminant of $K$, and set
\begin{equation*}
\omega_K=\left\{
\begin{array}{cl}
\displaystyle\frac{-1+\sqrt{d_K}}{2} & \textrm{if}~d_K\equiv1\Mod{4},\vspace{0.1cm}\\
\displaystyle\frac{\sqrt{d_K}}{2} & \textrm{if}~d_K\equiv0\Mod{4}
\end{array}
\right.
\end{equation*}
which satisfies $\mathcal{O}_K=\mathbb{Z}\omega_K+\mathbb{Z}$
(\cite[Theorem 9.2 in Chapter I]{Janusz}).
Then, Theorem \ref{main2} implies that
\begin{equation*}
K_{(N)}=K(f(-\overline{\omega_K})~|~f\in\mathbb{Q}(X_1(N))~\textrm{is finite
at $-\overline{\omega_K}$}),
\end{equation*}
where $\mathbb{Q}(X_1(N))$ is the field of
meromorphic functions
with rational Fourier coefficients
on the modular curve $X_1(N)$ for the congruence subgroup
\begin{equation*}
\Gamma_1(N)=\left\{\gamma\in\mathrm{SL}_2(\mathbb{Z})~|~
\gamma\equiv\begin{bmatrix}1&\mathrm{*}\\
0&1\end{bmatrix}\Mod{M_2(N\mathbb{Z})}\right\}.
\end{equation*}
Furthermore, Theorem \ref{main1} and Proposition \ref{transform} give rise to an isomorphism
\begin{equation}\label{revisit}
\begin{array}{ccl}
\mathcal{C}_\mathbb{Q}(N,\,d_K)&\stackrel{\sim}{\rightarrow}&\mathrm{Gal}(K_{(N)}/K)\\
\mathrm{[}Q\mathrm{]}&\mapsto&
\left(f(-\overline{\omega_K})\mapsto
f(-\overline{\omega_Q})~|~
f\in\mathbb{Q}(X_1(N))~\textrm{is finite at $-\overline{\omega_K}$}\right).
\end{array}
\end{equation}
Note that the case $N=1$ in (\ref{revisit}) is a revisit of the classical result for $\mathcal{O}=\mathcal{O}_K$ in (\ref{CDGHK}).
The case $N\geq2$ in (\ref{revisit}) was recently explained by Jung, Koo and Shin (\cite{J-K-S}).
\end{remark}

\bibliographystyle{amsplain}

\address{% First author
Department of Mathematics\\
Dankook University\\
Cheonan-si, Chungnam 31116\\
Republic of Korea} {hoyunjung@dankook.ac.kr}
\address{
Department of Mathematical Sciences \\
KAIST \\
Daejeon 34141\\
Republic of Korea} {jkgoo@kaist.ac.kr}
\address{% Corresponding author
Department of Mathematics\\
Hankuk University of Foreign Studies\\
Yongin-si, Gyeonggi-do 17035\\
Republic of Korea} {dhshin@hufs.ac.kr}
\address{
Department of Mathematics Education\\
Pusan National University\\
Busan 46241\\Republic of Korea}
{dsyoon@pusan.ac.kr}

\end{document}